 \def\LaTeX{\leavevmode L\raise.42ex
   \hbox{\kern-.3em\size{\sf@size}{0pt}\selectfont A}\kern-.15em\TeX}
\newcommand{\BibTeX}{{\rm B\kern-.05em{\sc
i\kern-.025emb}\kern-.08em\TeX}}
\newtheorem{theorem}{Theorem}[section]
\newtheorem{definition}{Definition}[section]
\newtheorem{remark}{Remark}
\numberwithin{equation}{section}
\begin{document}

\title{Sampling, splines and frames on compact manifolds}

\maketitle
\begin{center}

\author{Isaac Z. Pesenson }\footnote{ Department of Mathematics, Temple University,
 Philadelphia,
PA 19122; pesenson@temple.edu}

\end{center}

\tableofcontents

\begin{abstract}

Analysis on the unit sphere $\mathbb{S}^{2}$ found
many applications in  seismology, weather prediction, astrophysics, signal analysis, crystallography,  computer vision, computerized tomography, neuroscience, and statistics. 
	In the last two decades, the importance of  these and other applications triggered the development of various tools such as splines and wavelet bases suitable for the unit spheres $\mathbb{S}^{2}$,  $\>\>\mathbb{S}^{3}$ and  the rotation group  $SO(3)$. Present paper is  a summary of some of results of the author and his collaborators on the Shannon-type sampling, generalized (average) variational splines and localized frames (wavelets) on compact Riemannian manifolds. The results are illustrated by applications to Radon-type transforms on $\mathbb{S}^{d}$ and $SO(3)$.

\end{abstract}
 
\section{Introduction}

Harmonic analysis on the unit  spheres $\mathbb{S}^{2}$ and  $\mathbb{S}^{3}$, on the rotation group of $\mathbb{R}^{3}$ and even on more general manifolds 
found
many applications in  seismology, weather prediction, astrophysics, signal analysis,  computer vision, computerized tomography, neuroscience, scattering theory and statistics. 
Our list of references in which different aspects of analysis on manifolds were either developed or applied 
is very far from being complete \cite{ANS}-\cite{BPS}, \cite{CM}-\cite{DH}, \cite{DFHMP}-\cite{GM100}, \cite{HMS}-\cite{SAZ}. More references can be found in monographs \cite{AH}, \cite{FGS98}, \cite{FM04}, \cite{LS1}, \cite{MP}.

Cubature formulas on spheres and interpolation on spheres can be traced back to the classical papers by S. L. Sobolev \cite{Sob} and I. J. Schoenberg \cite{S}.
Pioneering work on splines, interpolation and approximation on spheres with many applications to geophysics was done by W. Freeden and his school  \cite{F78}-\cite{FS07}.  Spherical splines for statistical analysis were developed  in \cite{W1}-\cite{W2}. Important construction of the so-called needlets on $\mathbb{S}^{n}$ appeared in \cite{NPW}.

	The goal of the present study is to describe new constructions and applications  of generalized splines and Parseval
	 bandlimited and localized frames in a space $L_{2}(M)$, where $M$ is a compact homogeneous Riemannian manifold. 
  Our article is a summary of some of  results  that were obtained by author and his collaborators in   \cite{BEP}, \cite{BP},  \cite{DFHMP}, \cite{FP04}, \cite{gpes-1},  \cite{gpes-2},  \cite{KP}, \cite{Pes88b}-\cite{Pes13}. To the best of our knowledge these are the papers which contain the most general and comprehensive  results about splines, frames and Shannon sampling on compact Riemannian manifolds along with applications to Radon-type transforms on manifolds.

The paper is organized as follows. In section \ref{Sh-sampling} we develop what can be called the Shannon-type sampling of bandlimited functions on compact Riemannian manifolds. It is shown that proposed rate  of sampling  of bandlimited functions is essentially optimal. By bandlimited functions on compact manifolds we understand polynomials in eigenfunctions of the corresponding Laplace-Beltrami operator. 

 We also introduce variational splines which are used to interpolate smooth functions on finite sets of points. It should be noted that splines exhibit a strong localization compare to other natural "bases" on manifolds (compare, say, to eigenfunctions of the corresponding Laplace-Beltrami operators). It makes them (like in the classical situation) an indispensable  tool when it is necessary to reduce influence of a noise inherited into point-wise measurements.

We discuss a method of reconstruction of bandlimited functions from sampling sets as limits of variational splines when smoothness of splines goes to infinity. 
 Another method of reconstruction by using iterations is also discussed.  These results are based on our extension of the Plancherel-Polya inequalities to manifolds (Theorem \ref{PP}). It should be noted that in the case of manifolds (compact and non-compact) such inequalities appeared first in our papers \cite{Pes00} and \cite{Pes04b}. Later on it became common to call them Marcinkiewicz-Zygmund inequalities when one refers to compact manifolds.

In subsection \ref{splines-1} we introduce what we call \textit{generalized} variational interpolating splines on compact Riemannian manifolds \cite{Pes00}-\cite{Pes07}.
We are motivated by the following problem which is of
interest for integral geometry.
Let $M, \dim M=n,$ be a Riemannian manifold and
 $\mathcal{M}_{\nu}, \nu=1,2,...,N,$
 is a family of submanifolds
 $\dim \mathcal{M}_{\nu}=d_{\nu}, 0\leq d_{\nu}\leq n.$ Given a
set of numbers $v_{1},v_{2},...,v_{N}$ we would like to find a
function for which
\begin{equation}\label{samples}
\int_{\mathcal{M}_{\nu}}fdx=v_{\nu}, \>\>\>\nu=1,2,...,N.
\end{equation}

Moreover, we are interested in a "least curved" function that
satisfies the previous conditions. In other words, we seek a
function that satisfies (\ref{samples}) and minimizes the functional
$$
u\rightarrow \|(1-L)^{t/2}u\|_{L_{2}(M)},\>\>\>t\in \mathbb{R},
$$ 
where $-L$ is a differential second order elliptic operator which is self-adjoint and non-negative in the natural space $L_{2}(M)$.
Note that
in the case when the submanifold $\mathcal{M}_{\nu}$ is a point the integral
(\ref{samples}) is understood as a value of a function at this point.

Our result is that if $s$ is a solution of such variational
problem then the distribution $(1-L)^{t}s$ should satisfy the
following distributional pseudo-differential equation on $M$ for
any $\psi\in C_{0}^{\infty}(M)$,
\begin{equation}
\int_{M}\psi  (1-L)^{t}s
dx=\sum_{\nu=1}^{N}\alpha_{\nu}\int_{\mathcal{M}_{\nu}}\psi dx,
\end{equation}
where coefficients $\alpha_{\nu}\in \mathbb{C}$ depend just on
$s$.
This equation allows one to obtain the Fourier coefficients of the
function $s$ with respect to eigenfunctions of the
operator $L$.
From the very definition our solution $s$ is an "interpolant" in
the sense that it has a prescribed set of integrals. Moreover, we
show that the function $s$ is not just an interpolant but also an
optimal approximation to the set of all functions $f$ in the
Sobolev space $H_{t}(M)$ that satisfy (\ref{samples}) and
\begin{equation}\label{derivative}
\|(1-L)^{t/2}f\|_{L_{2}(M)}\leq K,
\end{equation}
for appropriate $K >0.$ Namely, we show that $s$ is the center of
the convex and bounded set of all functions that satisfy (\ref{samples}) and
(\ref{derivative}).

We use generalized splines for approximate inversion of the Funk-Radon transform, hemispherical transform, and the Radon transform on the group $SO(3)$.  The corresponding integral transforms are  introduced in section \ref{splines}.

In  subsections \ref{Radon-spheres} a brief introduction to the Funk-Radon transform $R$ is given. 
In this case the manifold 
 $M$ is the standard unit sphere $\mathbb{S}^{n}$ in $\mathbb{R}^{n+1}$, every $\mathcal{M}_{\nu},\>\>\nu=1,...,N,$ is a great subsphere of $ \mathbb{S}^{n}$ and $L$ is the Laplace-Beltrami operator on $\mathbb{S}^{n}$.  The Radon transform $R$  transforms even functions   on $\mathbb{S}^{n}$ into even functions and it is invertible. 
 
Our objective in subsection \ref{inverF-R}  is to find an approximate preimage  of $Rf$ where $f$ is sufficiently smooth and even by using only a finite  set of  values $\{v_{\nu}\}_{1}^{N}$ defined in (\ref{samples}). We achieve the goal (Theorem \ref{Main-Applic}) by constructing an even smooth function $s_{t}(f)$ ($t$ is a smoothness parameter)  which interpolates $f$ in the sense that it has the same set of integrals over subspheres $\{w_{\nu}\}_{1}^{N}$. This interpolant is an optimal approximation to $f$ in the sense explained above.

In subsection \ref{pointwise} we explore a different approach which  allows to obtain a quantitative estimate on the rate of convergence of our approximations. We consider a set of points $\{x_{\nu}\}_{1}^{N}$ which is dual to the set of subspheres $\{\mathcal{M}_{\nu}\}_{1}^{N}$.  Let $\rho>0$ be a separation parameter for the mesh  $\{x_{\nu}\}_{1}^{N}$. The first step now is to use values $\{Rf(x_{\nu})\}_{1}^{N}$ to construct a spline $s_{\rho, \tau}(Rf)$ ($\tau$ is a smoothness parameter) which interpolates and approximates the Radon transform $Rf$.  For the even function $S_{\rho, \tau}(f)=R^{-1}s_{\rho, \tau}(Rf)$ we are able to show (see Theorem \ref{last-thm555}) that a difference  between   $S_{\rho, \tau}(f)$ and  $f$ is of order $\rho$ to a power.  In other words when $\rho$ goes to zero a sequence of corresponding functions $S_{\rho, \tau}(f)$ converges to $f$ with a geometrical rate. Moreover, the Theorem \ref{SSTh} shows that if $f$ is $\omega$-bandlimited and $\rho$ is small enough compare to $\omega$ than $S_{\rho, \tau}(f)$ converges to $f$ when smoothness $\tau$ goes to infinity (for a sufficiently dense but \textit{fixed} mesh $\{x_{\nu}\}_{1}^{N}$). This result can be considered as a generalization of the Classical Sampling Theorem.

Hemispherical Radon transform on $\mathbb{S}^{n}$ is introduced in subsection \ref{hem}.
In subsections \ref{inverHem} and \ref{samplHem}  we realize for the hemispherical Radon transform the same  program which was realized in  \ref{inverF-R} and   \ref{pointwise} for the Funk-Radon transform.

In \ref{RadSO} we briefly describe some basic properties of the Radon transform $\mathcal{R}$ on the group of rotations $SO(3)$. Approximate inversion of the Radon transform on $SO(3)$ using  generalized splines is considered in \ref{approxSO}. A sampling theorem for the Radon transform $\mathcal{R}$ of bandlimited functions on $SO(3)$ is proved in \ref{SamplRSO}.

In subsections \ref{homman} and \ref{frames} we construct bandlimited and localized  Parseval frames in $L_{2}(M)$ where $M$ is a compact homogeneous manifold \cite{gpes-1}, \cite{pg}. 

 Let us remind \cite{Gr} that a  set of vectors $\{\theta_{v}\}$  in a Hilbert space $\mathcal{H}$ is called a Hilbert  frame if there exist constants $A, B>0$ such that for all $f\in \mathcal{H}$ 
\begin{equation}\label{Frame ineq}
A\|f\|_{L_{2}(M)}^{2}\leq \sum_{v}\left|\left<f,\theta_{v}\right>\right|^{2}     \leq B\|f\|_{L_{2}(M)}^{2}.
\end{equation}
The largest $A$ and smallest $B$ are called respectively the lower and the upper frame bounds and the ratio $B/A$ is known as the tightness of the frame.  If $A=B$ then $\{\theta_{v}\}$ is a \textit{tight } frame, and if $A=B=1$ it is called a \textit{Parseval } frame. 
Parseval frames are similar in many respects to orthonormal bases.  For example, if all members of a Parseval frame are unit vectors then it  is an  orthonormal basis. 

According to the general theory of Hilbert frames \cite{DS}, \cite{Gr} the frame  inequality (\ref{Frame ineq})  implies that there exists a dual frame $\{\Theta_{v}\}$ (which is not unique in general) 
for which the following reconstruction formula holds 
\begin{equation}
f=\sum_{v}\left<f,\theta_{v}\right>\Theta_{v}.
\end{equation}
The important fact is that in the case of a Parseval frame one can take  $\Theta_{v}=\theta_{v}$.

Using our Theorems \ref{cubature} and \ref{prodthm} we construct a Parseval frame  in $L_{2}(M)$ (where $M$ is compact and homogeneous) whose elements are bandlimited and have very strong localization on $M$. Such frame is a substitute for a wavelet basis and it is perfectly suitable to perform multiresolution  analysis on compact homogeneous manifolds.

It is worth to remind that from the point of view of computations bandlimited localized frames on manifolds have a number of advantages compare, say, to orthonormal bases  comprised of eigenfunctions of respected Laplace-Beltrami operators. The main of these advantages are  localization in  space and frequency and  redundancy. 

In subsections \ref{exactFCS}  and \ref{Discrete} we are using our localized Parseval frames on  $\mathbb{S}^{n}$ and on $SO(3)$ to develop  exact inversion of the  Funk-Radon and Radon transforms  of bandlimited functions on these manifolds from   finite sets of samples.

\section{ Shannon-type sampling on compact Riemannian manifolds }\label{Sh-sampling}

\subsection{Compact Riemannian manifolds}

Let $M$ be a compact Riemannian manifold without boundary and   $L$ be a differential second order elliptic operator which is  self-adjoint and negatively semi-definite  in the
space $L_{2}(M)$ constructed using a Riemannian density $dx$. 
The best known example of such operator is the Laplace-Beltrami which is given in a local coordinate system  by the formula
$$
\Delta
f=\sum_{m,k}\frac{1}{\sqrt{det(g_{ij})}}\partial_{m}\left(\sqrt{det(g_{ij})}
g^{mk}\partial_{k}f\right)
$$
where $g_{ij}$ are components of the metric tensor,$\>\>det(g_{ij})$ is the determinant of the matrix $(g_{ij})$, $\>\>g^{mk}$ components of the matrix inverse to $(g_{ij})$.

 In order to have an invertible operator we will work with $I-L$, where $I$ is the identity operator in $L_{2}(M)$.  It is known that for every such operator $L$ the domain of the power $(I-L)^{t/2},\>t\in \mathbb{R}$, is the Sobolev space $H_{t}(M),\>\>t\in \mathbb{R}$. There are different ways to introduce norm in Sobolev spaces. For a fixed operator $L$ we will introduce the graph norm as follows. 
\begin{definition}
\label{Sobolevnorm}
The Sobolev space $H_{t}(M), t\in \mathbb{R}$ can be introduced as the
domain of the operator $(I-L)^{t/2}$ with the graph norm

$$
\|f\|_{ H_{t}(M)}=\|(I-L)^{t/2}f\|_{L_{2}(M)}, f\in H_{t}(M).
$$
\end{definition}

Note, that such norm depends on $L$. However, for differential secod-order elliptic operators  such norms are equivalent for each $t\in \mathbb{R}$. 

Since the operator $(-L)$ is  self-adjoint and positive semi-definite it has a discrete
spectrum $0\leq\lambda_{0}\leq\lambda_{1}\leq \lambda_{2}\leq...,$ and
one can choose corresponding eigenfunctions $u_{0},
u_{1},...$ which form an orthonormal basis of $L_{2}(M).$ A
distribution $f$ belongs to $H_{t}(M), \>\>\>t\in \mathbb{R},$ if and
only if
$$
\|f\|_{ H_{t}(M)}=
\left(\sum_{j=0}^{\infty}(1+\lambda_{j})^{t}|c_{j}(f)|^{2}\right)^{1/2}<\infty,
$$
where Fourier coefficients $c_{j}(f)$ of $f$ are given by
$$
c_{j}(f)=\left<f,u_{j}\right>=\int_{M}f(x)\overline{u_{j}(x)}dx.
$$

\begin{definition}\label{bandlimited}
The span of all eigenfunctions of $-L$ whose corresponding eigenvalues are not greater than a positive $\omega$ is denoted by ${\bf E}_{\omega}(L)$ and called the set of $\omega$-bandlimited functions.
\end{definition}

\subsection{A sampling theorem on manifolds}

One can show  \cite{Pes00}, \cite{Pes04b} that for a compact Riemannian manifold $M$ there exists a constant $N_{M}$ such that 
for  every sufficiently small $\rho>0$ there exists  a finite set of points $M_{\rho}=\{x_{\nu}\}$ such that
\begin{enumerate}

\item The balls $B(x_{\nu}, \rho/4)$ are disjoint.

\item The balls $B(x_{\nu}, \rho/2)$ form a cover of $M$.

\item The multiplicity of the cover $\{B(x_{\nu}, \rho)\}$ is not greater $N_{M}$.
\end{enumerate}

\begin{definition}
A set of points $M_{\rho}=\{x_{\nu}\}$ that satisfies (1)-(3)  will be called a metric $\rho$-lattice. 
\end{definition}

\begin{theorem}\label{PP}

There exist constants $\>\>\>A=A(M, L)>0,\>\>$
$B=B(M, L)>0,$
 and
$\>\>c_{0}=c_{0}(M, L)>0,$ such that for any $\omega>0$, and for
every metric $\rho$-lattice $M_{\rho}=\{x_{\nu}\}$ with $\rho=
c_{0}\omega^{-1/2}$, the following Plancherel-Polya inequalities
hold:

\begin{equation}
A\|f\|_{L_{p}(M)}\leq
\rho^{n/p}\left(\sum_{\nu}|f(x_{\nu})|^{p}\right)^{1/p}
\leq 
B\|f\|_{L_{p}(M)},\>\>\>1\leq p\leq \infty,  \label{completePlPo100}
\end{equation}
for all $f\in {\mathbf E}_{\omega}(L)$ and $n=\dim \  M$.
\label{completePlPo200}
\end{theorem}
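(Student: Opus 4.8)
The plan is to prove the Plancherel–Polya (Marcinkiewicz–Zygmund) inequalities for bandlimited functions by exploiting three ingredients: (i) the local control of a bandlimited function's oscillation by its Sobolev norm, (ii) the geometric packing/covering properties of a metric $\rho$-lattice, and (iii) a Bernstein-type inequality that converts derivative bounds on $\mathbf{E}_\omega(L)$ back into $L_p$-norm control. The crucial scaling is $\rho = c_0 \omega^{-1/2}$, which links the "wavelength" of an $\omega$-bandlimited function (governed by $\sqrt{\omega}$, since $L$ is second order) to the mesh size, so that on each ball $B(x_\nu,\rho)$ the function is essentially constant up to a controllable error.

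I would establish the two inequalities separately. For the \emph{upper} bound, fix $f\in\mathbf{E}_\omega(L)$ and, on each ball $B(x_\nu,\rho)$, compare the point value $f(x_\nu)$ with the local average of $f$ over that ball. Writing $f(x_\nu)=\mathrm{vol}(B)^{-1}\int_B f\,dx + \big(f(x_\nu)-\mathrm{avg}\big)$, I would bound the averaging term by $C\rho^{-n/p}\|f\|_{L_p(B(x_\nu,\rho))}$ via Hölder, and bound the oscillation term by a local Poincaré/Sobolev estimate $|f(x_\nu)-\mathrm{avg}|\le C\rho\,\|\nabla f\|_{L_\infty}$ or its $L_p$ analogue. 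Summing over $\nu$, using that the balls $B(x_\nu,\rho/2)$ cover $M$ with multiplicity at most $N_M$ (so overlaps are boundedly finite), gives $\rho^{n/p}\big(\sum_\nu|f(x_\nu)|^p\big)^{1/p}\le C\big(\|f\|_{L_p(M)}+\rho\,\|\nabla f\|_{L_p(M)}\big)$. The derivative term is then absorbed by a Bernstein inequality on $\mathbf{E}_\omega(L)$ of the form $\|\nabla f\|_{L_p(M)}\le C\sqrt{\omega}\,\|f\|_{L_p(M)}=C c_0\rho^{-1}\|f\|_{L_p(M)}$; choosing $c_0$ small makes $\rho\|\nabla f\|_{L_p}$ a small multiple of $\|f\|_{L_p}$, yielding the clean constant $B$.

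For the \emph{lower} bound the argument runs in the reverse direction: I would show $\|f\|_{L_p(M)}$ cannot be much larger than the sampled values. On each ball I write $f(x)=f(x_\nu)+\big(f(x)-f(x_\nu)\big)$ for $x\in B(x_\nu,\rho/2)$, so $\|f\|_{L_p(B(x_\nu,\rho/2))}\le C\rho^{n/p}|f(x_\nu)|+\|f-f(x_\nu)\|_{L_p(B)}$; the second term is again controlled by $\rho\,\|\nabla f\|_{L_p(B)}$. Summing over the cover and invoking the same Bernstein inequality lets me absorb the gradient contribution into the left-hand side (using that the balls of radius $\rho/4$ are disjoint, which prevents over-counting from below), leaving $\|f\|_{L_p(M)}\le C\rho^{n/p}\big(\sum_\nu|f(x_\nu)|^p\big)^{1/p}$ once $c_0$ is small. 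The endpoint case $p=\infty$ is handled by the obvious modifications, replacing sums of $p$-th powers by suprema.

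\textbf{The main obstacle} will be establishing the Bernstein-type inequality $\|\nabla f\|_{L_p(M)}\le C\sqrt{\omega}\,\|f\|_{L_p(M)}$ uniformly for all $1\le p\le\infty$, since the spectral definition of $\mathbf{E}_\omega(L)$ gives such bounds most naturally in $L_2$ via the functional calculus for $L$. Transferring them to general $L_p$, together with the companion local Poincaré/Sobolev estimates with constants that are uniform over all balls $B(x_\nu,\rho)$ as $\rho\to0$, requires either a careful comparison of the manifold geometry to Euclidean balls at small scale (using bounded geometry and normal coordinates) or a parametrix/heat-kernel argument for the spectral multiplier defining $\mathbf{E}_\omega(L)$. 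This uniformity in both $p$ and the lattice is exactly the delicate point that the choice $\rho=c_0\omega^{-1/2}$ is designed to make tractable, and it is where I would expect to lean on the earlier constructions in \cite{Pes00} and \cite{Pes04b}.
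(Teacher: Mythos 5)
The paper itself gives no proof of Theorem \ref{PP} (it is stated with references to \cite{Pes00} and \cite{Pes04b}), and your plan reproduces the argument of those sources essentially as it is given there: local Poincar\'e--Sobolev estimates on the lattice balls, summation using the covering and bounded-multiplicity properties, and absorption of the gradient term via an $L_p$ Bernstein inequality $\|\nabla f\|_{L_p}\le C\sqrt{\omega}\,\|f\|_{L_p}$ once $c_0$ is chosen small. You have also correctly isolated the genuinely delicate ingredient --- the uniform-in-$p$ Bernstein inequality and the uniformity of the local constants over small balls --- so the proposal is correct and follows the same route as the cited proofs.
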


\begin{definition}
For a given space ${\mathbf E}_{\omega}(L)$ every $M_{\rho}$ metric $\rho$-lattice which is described in the previous theorem will be called a sampling set for ${\mathbf E}_{\omega}(L)$.
\end{definition}

This result can be reformulated as a generalization of the Classical Sampling Theorem for bandlimited functions (see \cite{Pes00}-\cite{Pes07}).

\begin{theorem}

There exists a  constant 
$\>\>c_{0}=c_{0}(M, L)>0,$ such that for any $\omega>0$, and for
every metric $\rho$-lattice $M_{\rho}=\{x_{\nu}\}$ with $\rho=
c_{0}\omega^{-1/2}$, all functions in ${\mathbf E}_{\omega}(L)$ are uniquely determined by their values on $M_{\rho}=\{x_{\nu}\}$ and can be reconstructed from this set of values in a stable way.

\end{theorem}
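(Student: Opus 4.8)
The plan is to derive this theorem directly from the Plancherel--Polya inequalities of Theorem \ref{PP} specialized to $p=2$. First I would record that ${\mathbf E}_{\omega}(L)$ is a \emph{finite-dimensional} subspace of $L_{2}(M)$ --- it is the span of the finitely many eigenfunctions $u_{j}$ with $\lambda_{j}\leq\omega$ --- so point evaluation $f\mapsto f(x)$ is a bounded linear functional and ${\mathbf E}_{\omega}(L)$ is a reproducing-kernel Hilbert space with kernel $\mathcal{K}_{\omega}(x,y)=\sum_{\lambda_{j}\leq\omega}u_{j}(x)\overline{u_{j}(y)}$. Writing $k_{\nu}=\mathcal{K}_{\omega}(\cdot,x_{\nu})$, the reproducing identity $\langle f,k_{\nu}\rangle=f(x_{\nu})$ holds for every $f\in{\mathbf E}_{\omega}(L)$, as one checks against the basis $\{u_{j}\}$ using $c_{j}(f)=\langle f,u_{j}\rangle$.

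For uniqueness I would use linearity: if $f,g\in{\mathbf E}_{\omega}(L)$ agree on $M_{\rho}$, then $h=f-g\in{\mathbf E}_{\omega}(L)$ vanishes at every $x_{\nu}$, and the lower inequality in (\ref{completePlPo100}) with $p=2$ gives $A\|h\|_{L_{2}(M)}\leq\rho^{n/2}\left(\sum_{\nu}|h(x_{\nu})|^{2}\right)^{1/2}=0$, whence $h=0$ and $f=g$. Thus an $\omega$-bandlimited function is determined by its samples on any sampling set.

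For stable reconstruction I would reinterpret (\ref{completePlPo100}) at $p=2$, after squaring and using $f(x_{\nu})=\langle f,k_{\nu}\rangle$, as the frame inequality
\begin{equation}
A^{2}\|f\|_{L_{2}(M)}^{2}\leq \rho^{n}\sum_{\nu}|\langle f,k_{\nu}\rangle|^{2}\leq B^{2}\|f\|_{L_{2}(M)}^{2},
\end{equation}
which says $\{\rho^{n/2}k_{\nu}\}$ is a frame for ${\mathbf E}_{\omega}(L)$ with bounds $A^{2},B^{2}$. The frame operator $Sf=\rho^{n}\sum_{\nu}f(x_{\nu})\,k_{\nu}$ then satisfies $A^{2}I\leq S\leq B^{2}I$ on ${\mathbf E}_{\omega}(L)$, so it is invertible there with $\|S^{-1}\|\leq A^{-2}$, and the standard frame reconstruction $f=\rho^{n}\sum_{\nu}f(x_{\nu})\,S^{-1}k_{\nu}$ recovers $f$ from its samples. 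Stability is quantified by the condition number $B^{2}/A^{2}$: the analysis map $f\mapsto(\rho^{n/2}f(x_{\nu}))_{\nu}\in\ell_{2}$ is bounded below by $A$, so its inverse on the range has norm at most $A^{-1}$, and a perturbation of the samples of $\ell_{2}$-size $\varepsilon$ alters the reconstruction by at most $A^{-1}\varepsilon$ in $L_{2}(M)$. Equivalently, one can avoid inverting $S$ by running the frame iteration $f_{m+1}=f_{m}+\tfrac{2}{A^{2}+B^{2}}\bigl(Sf-Sf_{m}\bigr)$, which converges geometrically to $f$ at rate $(B^{2}-A^{2})/(B^{2}+A^{2})$ and realizes the iterative reconstruction mentioned earlier.

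I do not expect a genuine obstacle, since all the analytic difficulty is already packaged into Theorem \ref{PP}; the only point needing care is the passage from the scalar inequalities (\ref{completePlPo100}) to the operator bounds $A^{2}I\leq S\leq B^{2}I$, which rests entirely on the reproducing-kernel identity $f(x_{\nu})=\langle f,k_{\nu}\rangle$ valid on the finite-dimensional space ${\mathbf E}_{\omega}(L)$.
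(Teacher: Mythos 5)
Your proof is correct and follows essentially the route the paper intends: the theorem is presented there as a direct reformulation of the Plancherel--Polya inequalities of Theorem \ref{PP}, and your frame-operator reconstruction via the kernels $k_{\nu}$ is exactly the paper's own construction in the subsection on the frame algorithm, where the projections $\psi_{\nu}$ of the Dirac measures $\delta_{x_{\nu}}$ onto ${\mathbf E}_{\omega}(L)$ (which coincide with your reproducing kernels) are shown to form a frame with the iteration (\ref{rec}) providing stable recovery. The only cosmetic discrepancy is in the normalization of the frame bounds, which does not affect the argument.
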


The following Theorem implies that our metric lattices $M_{\rho}=\{x_{\nu}\}$ (appearing in the
previous statements) always contain essentially 
optimal number of sampling points (see also
\cite{Pes04a}).

\begin{theorem}
 If the  constant $c_{0}(M, L)>0$ is the same
as above, then  for any $\omega>0$ and $\rho=c_{0}\omega^{-1/2}$,
there exist $C_{1}(M, L), C_{2}(M, L)$ such
that the number of points in  any $\rho$-lattice $M_{\rho}$
satisfies the following inequalities
\begin{equation}
C_{1}\omega^{n/2}\leq |M_{\rho}|\leq
C_{2}\omega^{n/2};\label{rate}
\end{equation}

\end{theorem}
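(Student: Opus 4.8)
The plan is to prove the two-sided bound by a standard volume packing-and-covering argument: I would convert the geometric properties (1)--(2) of the metric $\rho$-lattice into counting inequalities, and then substitute $\rho = c_{0}\omega^{-1/2}$. The single analytic input I would isolate first is a uniform comparison of the Riemannian volume of small geodesic balls with a power of their radius. Since $M$ is compact it has positive injectivity radius and bounded sectional curvature, so there exist constants $a,b>0$ and $r_{0}>0$, depending only on $M$, such that
$$
a\,r^{n}\le \mathrm{vol}\,B(x,r)\le b\,r^{n}
$$
for every $x\in M$ and every $0<r\le r_{0}$; this follows either from the fact that in geodesic normal coordinates the metric is uniformly close to the Euclidean one, or from the Bishop--Gromov and G\"unther comparison inequalities applied with the (uniform) curvature bounds of the compact manifold. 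Because the lattice is only defined for sufficiently small $\rho$, I may assume $\rho\le r_{0}$ throughout, i.e.\ $\omega$ is large enough; the finitely many remaining values of $\omega$ can be absorbed into the constants.

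For the \emph{upper} bound I would use property (1). The balls $B(x_{\nu},\rho/4)$ are pairwise disjoint, so summing their volumes and applying the lower volume estimate gives
$$
|M_{\rho}|\,a\,(\rho/4)^{n}\le \sum_{\nu}\mathrm{vol}\,B(x_{\nu},\rho/4)\le \mathrm{vol}(M),
$$
whence $|M_{\rho}|\le 4^{n}a^{-1}\mathrm{vol}(M)\,\rho^{-n}$. For the \emph{lower} bound I would use property (2): the balls $B(x_{\nu},\rho/2)$ cover $M$, so
$$
\mathrm{vol}(M)\le \sum_{\nu}\mathrm{vol}\,B(x_{\nu},\rho/2)\le |M_{\rho}|\,b\,(\rho/2)^{n},
$$
which yields $|M_{\rho}|\ge 2^{n}b^{-1}\mathrm{vol}(M)\,\rho^{-n}$. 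Thus $|M_{\rho}|$ is comparable to $\rho^{-n}$ with explicit constants.

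Finally I would substitute $\rho=c_{0}\omega^{-1/2}$, so that $\rho^{-n}=c_{0}^{-n}\omega^{n/2}$, and set $C_{1}=2^{n}b^{-1}c_{0}^{-n}\mathrm{vol}(M)$ and $C_{2}=4^{n}a^{-1}c_{0}^{-n}\mathrm{vol}(M)$ to obtain $C_{1}\omega^{n/2}\le |M_{\rho}|\le C_{2}\omega^{n/2}$, as claimed. Note that property (3), the bounded multiplicity of $\{B(x_{\nu},\rho)\}$, is not actually needed for this counting estimate; it is the packing property (1) and the covering property (2) alone that drive the two inequalities. The main obstacle, though it is essentially routine, is securing the uniform volume bounds with constants independent of the center $x$: this is precisely where compactness of $M$ is indispensable, since on a noncompact manifold the volume of small balls need not be uniformly comparable to $r^{n}$.
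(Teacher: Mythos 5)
Your proposal is correct and follows essentially the same route as the paper: the paper likewise bounds $|M_{\rho}|$ by comparing $\mathrm{vol}(M)$ with $|M_{\rho}|\inf_{x}\mathrm{vol}\,B(x,\rho/4)$ from below and $|M_{\rho}|\sup_{x}\mathrm{vol}\,B(x,\rho/2)$ from above, invokes the uniform two-sided estimate $c_{1}(M)\rho^{n}\leq \mathrm{vol}\,B(x,\rho)\leq c_{2}(M)\rho^{n}$ for small $\rho$, and substitutes $\rho=c_{0}\omega^{-1/2}$. Your added justification of the volume comparison and the observation that property (3) is not needed are correct but do not change the argument.
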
\label{FT}
\begin{proof}
 According to the
definition of a lattice $M_{\rho}$ we have
$$
|M_{\rho}|\inf_{x\in M}Vol(B(x,\rho/4))\leq Vol(M)\leq
|M_{\rho}|\sup_{x\in M}Vol(B(x,\rho/2))
$$
or
$$
\frac{Vol(M)}{\sup_{x\in M}Vol(B(x,\rho/2))}\leq |M_{\rho}|\leq
\frac{Vol(M)}{\inf_{x\in M}Vol\left(B(x,\rho/4)\right)}.
$$
Since for  certain $c_{1}(M), c_{2}(M)$, all $x\in M$ and all
sufficiently small $\rho>0$, one has a double inequality
$$
c_{1}(M)\rho^{n}\leq  Vol(B(x,\rho))\leq c_{2}(M)\rho^{n},
$$
and since $\rho=c_{0}\omega^{-1/2}, $ we obtain that for certain
$C_{1}(M, L),\> C_{2}(M, L)$ and all $\omega>0$
\begin{equation}
C_{1}\omega^{n/2}\leq |M_{\rho}|\leq C_{2}\omega^{n/2}.
\end{equation}
\end{proof}

Note, that since the
operator $L$ is of order two, the dimension
$\mathcal{N}_{\omega}$ of the space ${\mathbf E}_{\omega}(L)$ is
given asymptotically by Weyl's formula \cite{Sogge93}
\begin{equation}
\mathcal{N}_{\omega}(M)\asymp C(M)\omega^{n/2},\label{W}
\end{equation}
where $n=\dim M$.

Since the inequalities (\ref{rate}) are in an agreement with 
Weyl's formula (\ref{W}),  the last theorem shows that
if $\omega$ is large enough then every sampling set $M_{\rho}$ for
${\mathbf E}_{\omega}(L)$ contains essentially the "correct" number of
points.

\subsection{Reconstruction using variational splines}

As it was explained (see the  formula (\ref{recon}))  one can always  use a dual frame for reconstruction of a function from its projections. However, in general  it is not easy to construct a dual frame (unless the frame is tight and then it is self-dual).

The goal of this section is to introduce variational splines on manifolds and to show that such splines can be used for reconstruction of  bandlimited functions from appropriate sets of samples.

Given a $\rho$ lattice $M_{\rho}=\{x_{\nu}\}$ and a sequence $\{z_{\nu}\}\in l_{2}$ we
will be
 interested in finding a
 function $s_{k}$ in the Sobolev space $ H_{2k}(M),$ where $k >n/2,\>\>\>n=\dim\>M,$  such that
\begin{enumerate}
\item $ s_{k}(x_{\nu})=z_{\nu}, \>\>\>x_{\nu}\in \ M_{\rho};$

\item function $s_{k}$ minimizes functional $g\rightarrow \|L^{k}g\|_{L_{2}(M)}$.
\end{enumerate}

For a given sequence  \  $\{z_{\nu} \}\in l_{2}$ consider a function $f$
from $H_{2k}(M)$ such that $f(x_{\nu})=z_{\nu}.$ Let $\mathcal{P}f$
 denote the orthogonal projection of this function $f$  in the Hilbert
space $H_{2k}(M)$ with the  inner product
$$\left<f,g\right>=\sum_{x_{\nu}\in
M_{\rho}}f(x_{\nu})g(x_{\nu})+ \left<\left(-L\right)^{k/2}f, \left(-L\right)^{k/2}g\right>$$
on the subspace
$U^{2k}(M_{\rho})=\left \{f\in H_{2k}(M)|f(x_{\nu})=0\right \}$ with the norm generated 
by the same inner product.
Then the function $g=f-\mathcal{P}f$ will be the unique solution of the
above minimization problem for the
 functional $g\rightarrow \|L^{k}g\|_{L_{2}(M)},\>\>\>
 k=2^{l}n,\>\>l=0,1, 2,...$.

  It is convenient to introduce the so-called Lagrangian splines. For a point $x_{\nu}$ in a lattice $M_{\rho}$ the corresponding Lagrangian spline $l^{2k}_{\nu}$ is a function  in $ H_{2k}(M)$ that minimizes the
same functional and
 takes value $1$ at the point $x_{\nu}$
and $0$ at all other points of $M_{\rho}$.
 Different parts of the following theorem can be found in \cite{Pes00}-\cite{Pes04c}.

\begin{theorem} The following statements hold:

\begin{enumerate}

  \item for any function $f$ from $H_{2k}(M), \>\>\>k=2^{l}n, \>\>l=1,2,
..., \>\>n=\dim M,$ there exists a unique
function $s_{k}(f)$ from the Sobolev space $H_{2k}( M), $ such that
$ f|_{M_{\rho}}=s_{k}(f)|_{M_{\rho}}; $ and this function 
  $s_{k}(f)$ minimizes the functional $u\rightarrow \|L^{k}u\|_{L_{2}( M)}$;

\item  every such function $s_{k}(f)$ is of the form 
$$
s_{k}(f)=\sum_{x_{\nu}\in M_{\rho}}
f(x_{\nu})l^{2k}_{\nu};
$$

\item   functions $l^{2k}_{\nu}$
form a Riesz basis in the space
 of all polyharmonic functions with singularities on $M_{\rho}$ i.e.  in the
space of such functions from
 $H_{2k}( M )$ which in the sense of distributions satisfy equation

$$L ^{2k}u=\sum_{x_{\nu }\in M_{\rho}}\alpha _{\nu }\delta (x_{\nu
})$$ where $\delta (x_{\nu})$ is the Dirac measure at the point
$x_{\nu }$; 

\item   if in addition the
set $M_{\rho}$ is invariant under some subgroup of diffeomorphisms acting on
$M$ then every two functions $l^{2k}_{\nu},\>\> l^{2k}_{\mu}$
 are translates of each other.
 \end{enumerate}
 \label{Splines}
 \end{theorem}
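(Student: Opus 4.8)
The plan is to carry out the entire argument inside the Hilbert space $H_{2k}(M)$ equipped with the inner product
$$\langle f,g\rangle=\sum_{x_\nu\in M_\rho}f(x_\nu)\overline{g(x_\nu)}+\langle L^k f,L^k g\rangle,$$
as set up immediately before the statement (so that the seminorm part agrees with the functional $u\mapsto\|L^k u\|_{L_2(M)}$ being minimized). First I would check that this really is an inner product whose norm is equivalent to the standard one: since $k=2^l n$ forces $2k>n/2$, the Sobolev embedding $H_{2k}(M)\hookrightarrow C(M)$ makes each evaluation $\mathrm{ev}_{x_\nu}:f\mapsto f(x_\nu)$ a bounded functional, so the form is well defined and continuous. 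Positivity follows because $\|L^k f\|=0$ forces $f\in\ker L^k=\ker L$ (the $0$-eigenspace, the constants when $\lambda_0=0$), and then vanishing at the nodes forces $f=0$; the seminorm $\|L^k\cdot\|$ together with the finitely many evaluations recaptures the low-frequency part that $\|L^k\cdot\|$ misses, giving norm equivalence. In particular $U^{2k}(M_\rho)=\bigcap_\nu\ker\mathrm{ev}_{x_\nu}$ is closed of finite codimension $N=|M_\rho|$.

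For part (1), the admissible interpolants of $f$ form the closed affine set $f+U^{2k}(M_\rho)$, on which the evaluation part of the norm is the constant $\sum_\nu|f(x_\nu)|^2$; hence minimizing $\|L^k\cdot\|_{L_2(M)}$ there is the same as minimizing the full Hilbert norm, whose unique minimizer is $s_k(f)=f-\mathcal P f$ by the projection theorem already indicated in the text. Uniqueness and the fact that $s_k(f)$ depends only on the data $\{f(x_\nu)\}$ are immediate. For part (2), I would observe that the solution map $\{f(x_\nu)\}\mapsto s_k(f)$ is linear: the minimizer is characterized by interpolation together with $s_k(f)\perp U^{2k}(M_\rho)$, and both properties survive linear combinations. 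Applying this to the nodal data sequences $(\delta_{\nu\mu})_\mu$, whose minimizers are by definition the Lagrangian splines $l^{2k}_\nu$, yields $s_k(f)=\sum_\nu f(x_\nu)l^{2k}_\nu$.

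Part (3) is where the real work lies, and I would split it into an Euler--Lagrange identification and a dimension count. Writing out $s_k(f)\perp U^{2k}(M_\rho)$ and integrating by parts (using self-adjointness of $L$) gives, for every $v\in H_{2k}(M)$ with $v|_{M_\rho}=0$,
$$\langle L^{2k}s_k(f),v\rangle=\langle L^k s_k(f),L^k v\rangle=0,$$
so the continuous functional $v\mapsto\langle L^{2k}s_k(f),v\rangle$ vanishes on the common kernel of the $N$ functionals $\mathrm{ev}_{x_\nu}$; by the standard lemma it is a combination $\sum_\nu\alpha_\nu\,\mathrm{ev}_{x_\nu}$, i.e. $L^{2k}s_k(f)=\sum_\nu\alpha_\nu\delta(x_\nu)$ in the distributional sense, which is legitimate because $2k>n/2$ puts $\delta(x_\nu)\in H_{-2k}(M)$. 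Conversely, any $u\in H_{2k}(M)$ with $L^{2k}u=\sum_\nu\alpha_\nu\delta(x_\nu)$ satisfies $\langle u,v\rangle=0$ for all $v\in U^{2k}(M_\rho)$, hence lies in $(U^{2k}(M_\rho))^\perp$; so the space of polyharmonic functions with singularities on $M_\rho$ coincides with the $N$-dimensional spline space $(U^{2k}(M_\rho))^\perp$. The $l^{2k}_\nu$ are $N$ elements of it and are linearly independent, since evaluating $\sum_\nu c_\nu l^{2k}_\nu=0$ at $x_\mu$ forces $c_\mu=0$; thus they form a basis, and in a finite-dimensional space a basis is automatically a Riesz basis, its Gram matrix being positive definite. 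Finally, for part (4), if a subgroup of isometries commuting with $L$ acts on $M$ permuting the nodes, the pullback $T_g:h\mapsto h\circ g^{-1}$ preserves both terms of the inner product, hence maps the spline space to itself and, by the uniqueness in part (1), sends $l^{2k}_\nu$ to $l^{2k}_{g\cdot x_\nu}$; when the group is transitive on $M_\rho$ this shows all the $l^{2k}_\nu$ are translates of one another. I expect the main obstacle to be the equivalence in part (3) between the variational and distributional descriptions --- in particular, arguing that the singular part of $L^{2k}s_k(f)$ is a pure combination of Dirac masses with no derivatives, which rests precisely on testing only against $H_{2k}(M)$ and on the embedding $2k>n/2$.
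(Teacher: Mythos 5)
Your proposal is correct and follows essentially the same route as the paper: the paper's argument for parts (1)--(2) is exactly the orthogonal-projection construction in the modified inner product on $H_{2k}(M)$ that you set up, and your Euler--Lagrange identification of the spline space with $(U^{2k}(M_\rho))^\perp$ for part (3) and the symmetry/uniqueness argument for part (4) are the standard completions that the paper defers to its references. The only points worth noting are that you (rightly) read the quadratic term of the inner product as $\langle L^k f, L^k g\rangle$ to match both the minimized functional and the equation $L^{2k}u=\sum_\nu\alpha_\nu\delta(x_\nu)$ (the paper's displayed inner product has $(-L)^{k/2}$, an apparent typo), and that in part (4) you correctly supply the transitivity hypothesis that the theorem's wording leaves implicit.
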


Next, if $f\in H_{2k}( M), k=2^{l}n, l=0,1,...$ then
 $f-s_{k}(f)\in U^{2k}(M_{\rho})$ and we have for
$k=2^{l}
n, l=0,1,...$
  $$\|f-s_{k}(f)\|_{L_{2}( M)}\leq
 (C_{0}\rho)^{k}\|L^{k/2}(f-s_{k}(f))\|_{L_{2}(M)}.$$
Using minimization property of 
$$
s_{k}(f)=\sum_{x_{\nu}\in M_{\rho}}f(x_{\nu})l^{2k}_{\nu}
$$ 
we obtain
the inequality 
\begin{equation}
\left\|f-\sum_{x_{\nu}\in M_{\rho}}f(x{_\nu})l^{2k}_{\nu}\right\|_{L_{2}(M)}\leq (c_{0}\rho)^{k}\|L^{k/2}f\|_{L_{2}( M)}, k=2^{l}n,\ l=0,1,...,\label{SobAppr}
\end{equation}
and for $f\in {\bf E}_{\omega}(L)$ the Bernstein inequality gives 
for any $f\in {\bf E}_{\omega}(L)$ and $ k=2^{l}n, \ l=0,1,....$,
\begin{equation}
\left\|f-\sum_{x_{\nu}\in M_{\rho}} f(x{_\nu})l^{2k}_{\nu}\right\|_{L_{2}( M)}\leq(c_{0}\rho\sqrt{\omega} )^{k}\|f\|_{L_{2}( M)}.\label{PWAppr}
\end{equation}

These inequalities lead to the following Approximation  and Reconstruction Theorem.

\begin{theorem}There exist constants $C=C(M)>0$ and  $c_{0}=c_{0}(M)>0$  such that for any $\omega>0$ and any $M_{\rho}$ with 
   $0<\rho \leq c_{0}\omega^{-1/2}$ the following inequality holds for all $f\in {\bf E}_{\omega}(L)$
$$
\sup_{x\in M}|(s_{k}(f)(x)-f(x))|\leq \omega^{n}
\left(C(M)\rho^{2}\omega\right)^{k-n}\|
f\|_{L_{2}(M)}, \>k=2^{l}n,\> l=0, 1, ... .
$$
In other words, by choosing  $\rho>0$ such that
$$
\rho<\left(C(M)\omega   \right)^{-1/2},
$$ 
one obtains  the following reconstruction algorithm  
$$
f(x)=\lim_{k\rightarrow \infty} s_{k}(f)(x),
$$
where convergence holds in the uniform norm.
\end{theorem}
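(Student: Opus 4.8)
The plan is to take the $L_2$ approximation estimate that is already available and promote it to the uniform norm, using a Sobolev embedding whose cost is paid for by an interpolation (logarithmic convexity) argument. Set $h=f-s_k(f)$, so that $h$ vanishes on the lattice $M_\rho$. On one hand, the variational estimate (\ref{PWAppr}) (equivalently (\ref{SobAppr}) combined with the Bernstein inequality) bounds $\|h\|_{L_2(M)}$ by a high power of a small base times $\|f\|_{L_2(M)}$. On the other hand, since $f\in{\bf E}_\omega(L)$ and $s_k(f)$ minimizes $u\mapsto\|L^ku\|_{L_2(M)}$ among all interpolants (Theorem \ref{Splines}), one has $\|L^ks_k(f)\|_{L_2(M)}\le\|L^kf\|_{L_2(M)}\le\omega^k\|f\|_{L_2(M)}$ by Bernstein, and therefore $\|L^kh\|_{L_2(M)}\le 2\omega^k\|f\|_{L_2(M)}$. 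Thus $h$ is extremely small in $L_2(M)$ while being, crudely, under control in $H_{2k}(M)$.

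The first main step is to interpolate these two bounds. In the spectral representation one has $\|L^jh\|_{L_2(M)}^2=\sum_i\lambda_i^{2j}|c_i(h)|^2$, so $j\mapsto\log\|L^jh\|_{L_2(M)}^2$ is a convex function of $j$; consequently, for $0\le n\le k$, $\|L^nh\|_{L_2(M)}\le\|h\|_{L_2(M)}^{1-n/k}\,\|L^kh\|_{L_2(M)}^{n/k}$. Inserting the two estimates above, the factor $\|L^kh\|_{L_2(M)}^{n/k}$ contributes $\omega^n$ (up to a harmless constant $2^{n/k}\le 2$), while the factor $\|h\|_{L_2(M)}^{1-n/k}$ retains the small base raised to the power $k-n$. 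This is exactly the mechanism that produces the prefactor $\omega^n$ and the shifted exponent $k-n$ in the statement.

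The second step is the Sobolev embedding. Since $2n>n=\dim M$, we have $H_{2n}(M)\hookrightarrow C(M)$, and $\|h\|_{H_{2n}(M)}$ is comparable (with constant depending only on $M$) to $\|h\|_{L_2(M)}+\|L^nh\|_{L_2(M)}$. Hence $\sup_{x\in M}|h(x)|\le C(M)\big(\|h\|_{L_2(M)}+\|L^nh\|_{L_2(M)}\big)$, and feeding in the two bounds the $\|L^nh\|_{L_2(M)}$ term dominates, giving $\sup_{x\in M}|s_k(f)(x)-f(x)|\le\omega^n\left(C(M)\rho^2\omega\right)^{k-n}\|f\|_{L_2(M)}$. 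The reconstruction statement is then immediate: choosing $\rho<(C(M)\omega)^{-1/2}$ forces the base $C(M)\rho^2\omega<1$, so the right-hand side tends to $0$ as $k=2^ln\to\infty$, which is uniform convergence $s_k(f)\to f$.

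The step I expect to be the genuine obstacle is conceptual, not computational: because $h=f-s_k(f)$ is not bandlimited, neither the Plancherel--Polya inequalities of Theorem \ref{PP} nor the associated Nikolskii inequalities for ${\bf E}_\omega(L)$ may be applied to $h$ to pass from $L_2$ to $L_\infty$ directly. The argument circumvents this by trading the $L_2$-smallness of $h$ (from the variational estimate) against a controlled high-order Sobolev bound (from the minimization property together with Bernstein), balanced through logarithmic convexity; ensuring that the powers of $\rho$ and $\omega$ combine precisely into $\omega^n(C(M)\rho^2\omega)^{k-n}$, and that the choice $k=2^ln\ge n$ makes the interpolation exponent $n/k$ legitimate, is the delicate bookkeeping on which the sharp form of the estimate rests.
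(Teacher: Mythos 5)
Your strategy is the same one the paper has in mind: it derives the theorem from the Poincar\'e-type estimate (\ref{SobAppr}), the minimization property, the Bernstein inequality (giving (\ref{PWAppr})), and a Sobolev embedding; your logarithmic-convexity interpolation between $\|h\|_{L_2}$ and $\|L^{k}h\|_{L_2}$ is a clean way to package the step that the original papers \cite{Pes00}, \cite{Pes04c} carry out by iterating the Poincar\'e inequality. All the individual ingredients you invoke are legitimate: $\|L^{k}h\|_{L_2}\le 2\omega^{k}\|f\|_{L_2}$ from minimization plus Bernstein, convexity of $j\mapsto\log\|L^{j}h\|^{2}_{L_2}$ in the spectral representation, and $H_{2n}(M)\hookrightarrow C(M)$.

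The one place you should be more careful is the final bookkeeping, where you assert that the pieces combine into exactly $\omega^{n}\left(C\rho^{2}\omega\right)^{k-n}$. Feeding your own two bounds into the interpolation inequality gives
$$
\|L^{n}h\|_{L_2(M)}\le \left((c_{0}\rho\sqrt{\omega})^{k}\|f\|_{L_2(M)}\right)^{1-n/k}\left(2\omega^{k}\|f\|_{L_2(M)}\right)^{n/k}\le 2\,\omega^{n}\left(c_{0}^{2}\rho^{2}\omega\right)^{(k-n)/2}\|f\|_{L_2(M)},
$$
because $(c_{0}\rho\sqrt{\omega})^{k-n}=(c_{0}^{2}\rho^{2}\omega)^{(k-n)/2}$. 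So the decaying factor you actually obtain is $\left(C\rho^{2}\omega\right)^{(k-n)/2}$, not $\left(C\rho^{2}\omega\right)^{k-n}$; to get the exponent $k-n$ literally one needs the starting estimate in the normalization $\|h\|_{L_2(M)}\le (C\rho^{2})^{k}\|L^{k}h\|_{L_2(M)}$ (so that (\ref{PWAppr}) reads $(C\rho^{2}\omega)^{k}$), which is how the inequality is stated in the source papers, rather than the form $(C_{0}\rho)^{k}\|L^{k/2}h\|_{L_2(M)}$ displayed here. This is a discrepancy in the exponent, not a gap in the logic: since the hypothesis $\rho<\left(C(M)\omega\right)^{-1/2}$ makes the base smaller than one, either exponent yields the uniform convergence $s_{k}(f)\to f$ as $k=2^{l}n\to\infty$, so the reconstruction statement is fully established by your argument.
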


It should be noted that there exists an algorithm \cite{Pes04c} which allows to express variational splines in terms of eigenfunctions of the operator $L$. Moreover, it was also shown \cite{Pes04a} that eigenfunctions of $L$ that belong to a fixed space ${\bf E}_{\omega}(L)$  can be perfectly approximated by  eigenfunctions of certain finite-dimensional matrices in  spaces of splines with a fixed set of nodes.

\subsection{Reconstruction using iterations}

For  a metric lattice $M_{\rho}=\{x_{\nu}\}$ one can construct
corresponding partitions of unity (\cite{Pes00}, \cite{Pes04a}) $\theta=\{\theta_{\nu}\}$ (we will
call it {\it associated} to the metric lattice $M_{\rho}$) with the
properties
\begin{enumerate}
\item $\operatorname{supp}(\theta_{\nu}) \subseteq B(x_{\nu},\rho/2),$
\item for each multi-index $\alpha$  there exists
$C_{\theta}(\alpha)$ such that
$$ \sup_\nu \sup_{x}|\partial^{|\alpha|}\theta_{\nu}|\leq C_{\theta}(\alpha). $$

\end{enumerate}
In analogy to established terminology for the group case we call
such a partition a {\it bounded uniform partition of unity} (for
short a BUPU).
For any continuous function  $f$ we define the  (Voronoi-type) function
$V_{M_{\rho}}f$  by:
\begin{equation}
V_{M_{\rho}}(f)=\sum_{\nu}f(x_{\nu}) \ \theta_{\nu}, \quad  \theta_{\nu}\in
C_{0}^{\infty}(B(x_{\nu},\rho/2)).
\end{equation}
For the case that $\theta_\nu$ is just the indicator function of those
points which are closer to $x_\nu$ than to any other point the resulting
function is constant on Voronoi domains and is simply the nearest
neighborhood interpolator.

It can be shown \cite{FP04} that for
any $\rho$-lattice $M_{\rho}$ the function $V_{M_{\rho}}(f)$ belongs to $L_{2}(M)$
as long as $f$ belongs to a $H^{k}(M)$, for some $k>n/2, n=\dim M$, i.e.\ that
$V_{M_{\rho}}$ is a linear operator from $H^{k}(M)$ into $L^2(M)$.

 The  approximation  operator $A$ will then be defined as
\begin{equation}
A_{M_{\rho}} =P_{\omega}\circ V_{M_{\rho}} , \quad \quad  \mbox{resp.\ } \quad A_{M_{\rho}}
f = P_{\omega} V_{M_{\rho}}(f).
\end{equation}
where $P_{\omega}$ is the orthonormal projector from $L_{2}(M)$
onto ${\bf E}_{\omega}(L)$.
Note  that
operator $A_{M_{\rho}}$ depends not just on the lattice $M_{\rho}$ but also on the
corresponding partition of unity. The following statement was proved in \cite{FP04}.
 
\begin{theorem}
For a given $k>n/2, \>n=\dim M,$ there exist constants $C=C(M, L, k)>0$ and $\rho(M, L, k)>0$
such that for any metric $\rho$-lattice $M_{\rho}$ with $ \rho<\rho(M, L, k)$, and any $ \omega > 0 $
\begin{equation}
\|f-A_{M_{\rho}}f\|_{L_{2}(M)} \leq C\rho(1+\omega^{2})^{k/2} \  \|f\|_{L_{2}(M)}  \quad
\mbox{ for all} \ f \in  {\bf E}_{\omega}(L).
\end{equation}
Hence, if $k>n/2$ and $\omega > 0 $ are given we can  choose
$$ \rho< \left(C(1+\omega^{2})^{k/2}\right)^{-1}, $$
which implies that
$$
\|f-A_{M_{\rho}}f\|_{L_{2}(M)}  \ \leq \  \epsilon \ \|f\|_{L_{2}(M)} \ \mbox{with}  \  \epsilon =
C\rho(1+\omega^{2})^{k/2}<1. $$
\end{theorem}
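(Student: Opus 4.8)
The plan is to exploit that any $f\in {\bf E}_{\omega}(L)$ is reproduced by the spectral projector, $P_{\omega}f=f$, and that $P_{\omega}$, being an orthogonal projection onto ${\bf E}_{\omega}(L)$, is a contraction on $L_{2}(M)$. Writing
$$
f-A_{M_{\rho}}f=P_{\omega}f-P_{\omega}V_{M_{\rho}}f=P_{\omega}\bigl(f-V_{M_{\rho}}f\bigr),
$$
one gets $\|f-A_{M_{\rho}}f\|_{L_{2}(M)}\le \|f-V_{M_{\rho}}f\|_{L_{2}(M)}$ at no cost, so the projector disappears and the whole problem reduces to estimating how well the Voronoi-type operator $V_{M_{\rho}}$ reproduces a smooth function. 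Here $V_{M_{\rho}}f$ is well defined because $f$ is a finite combination of eigenfunctions, hence in $H^{k}(M)$ for every $k$.

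Next I would estimate $\|f-V_{M_{\rho}}f\|_{L_{2}(M)}$ using that $\{\theta_{\nu}\}$ is a partition of unity, so $\sum_{\nu}\theta_{\nu}\equiv 1$ and
$$
f(x)-V_{M_{\rho}}f(x)=\sum_{\nu}\bigl(f(x)-f(x_{\nu})\bigr)\theta_{\nu}(x).
$$
Since $\operatorname{supp}\theta_{\nu}\subseteq B(x_{\nu},\rho/2)$ and the cover $\{B(x_{\nu},\rho)\}$ has multiplicity at most $N_{M}$, for each fixed $x$ only a bounded number of terms survive, and each involves $f(x)-f(x_{\nu})$ with $d(x,x_{\nu})<\rho/2$. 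On each ball I would invoke a Bramble--Hilbert / local Poincar\'e estimate: because $V_{M_{\rho}}$ reproduces constants, the oscillation $|f(x)-f(x_{\nu})|$ is controlled by $\rho$ times a local Sobolev norm of $f$, with the embedding $H^{k}(M)\hookrightarrow C(M)$ (valid precisely since $k>n/2$) making the point evaluations $f\mapsto f(x_{\nu})$ bounded and the pointwise oscillation meaningful. Squaring, summing over $\nu$, and using the bounded overlap together with the uniform volume comparison $c_{1}\rho^{n}\le \operatorname{Vol}(B(x,\rho))\le c_{2}\rho^{n}$ should yield a global bound $\|f-V_{M_{\rho}}f\|_{L_{2}(M)}\le C\rho\,\|f\|_{H^{k}(M)}$, with $C$ independent of $\rho$ for $\rho<\rho(M,L,k)$.

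Finally I would convert the Sobolev norm into an $L_{2}$ norm by the Bernstein inequality for bandlimited functions: since the spectrum of $-L$ on ${\bf E}_{\omega}(L)$ lies in $[0,\omega]$, it bounds $\|f\|_{H^{k}(M)}$ by the factor $(1+\omega^{2})^{k/2}\|f\|_{L_{2}(M)}$. Combining the three steps gives $\|f-A_{M_{\rho}}f\|_{L_{2}(M)}\le C\rho(1+\omega^{2})^{k/2}\|f\|_{L_{2}(M)}$, which is the asserted inequality; the concluding ``Hence'' clause is then purely algebraic, since choosing $\rho<\bigl(C(1+\omega^{2})^{k/2}\bigr)^{-1}$ forces $\epsilon=C\rho(1+\omega^{2})^{k/2}<1$.

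The main obstacle is the second step: producing the local approximation estimate \emph{uniformly} over $M$ with the correct first power of $\rho$. One must transfer the flat Bramble--Hilbert / Poincar\'e inequality to small geodesic balls with constants independent of the center $x_{\nu}$ and of the particular lattice, which rests on the uniform comparability of small geodesic balls to Euclidean balls (bounded geometry) and on the quantitative BUPU bounds $\sup_{\nu}\sup_{x}|\partial^{|\alpha|}\theta_{\nu}|\le C_{\theta}(\alpha)$. Controlling the interplay between the threshold $\rho(M,L,k)$, the overlap constant $N_{M}$, and the Sobolev constant supplied by $k>n/2$ is where the genuine work lies; once this uniform local estimate is secured, the reduction via $P_{\omega}$ and the Bernstein step are routine.
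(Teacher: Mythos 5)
The paper itself does not prove this theorem -- it only cites \cite{FP04} -- but your outline is precisely the argument used there: drop the contraction $P_{\omega}$, write $f-V_{M_{\rho}}f=\sum_{\nu}(f-f(x_{\nu}))\theta_{\nu}$, bound the local oscillation by a Poincar\'e/Sobolev estimate on each ball $B(x_{\nu},\rho)$ (where the correct scaling is $|f(x)-f(x_{\nu})|\lesssim \rho^{1-n/2}\bigl(\sum_{1\leq|\alpha|\leq k}\|\partial^{\alpha}f\|^{2}_{L_{2}(B(x_{\nu},\rho))}\bigr)^{1/2}$, the factor $\rho^{n}$ from $\operatorname{Vol}(B(x_{\nu},\rho))$ restoring the net power $\rho$ after squaring and summing with bounded overlap), and finish with the Bernstein inequality on ${\bf E}_{\omega}(L)$. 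Your sketch is correct and identifies the genuine technical point (uniformity of the local estimate over the manifold), so nothing further is needed beyond making that local inequality precise.
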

As a consequence we obtain that $f\in {\bf E}_{\omega}(L)$ can be
recovered from $A_{M_{\rho}}f$ by the following iterative procedure.
Starting from
 $f_{0}=A_{M_{\rho}}f$ and defining inductively
 \begin{equation} f_{m+1}=f_{m}+A_{M_{\rho}}(f-f_{m})
\end{equation}
one has
\begin{equation}
\lim_{m\rightarrow\infty}f_{m}=f
\end{equation}
with the error estimate
\begin{equation}
\|f-f_{m}\|_{L_{2}(M)}\leq\epsilon^{m+1}\|f\|_{L_{2}(M)}.
\end{equation}
Using Sobolev Embedding Theorems  one can show (\cite{Pes04a}, \cite{FP04}) that convergence of iterations takes place not only in
the $L_{2}(M)$ norm but also in Sobolev and uniform norms on the manifold.

\subsection{Reconstruction using the frame algorithm}

 What follows is a brief description of the frame algorithm (see \cite{Gr}).
Let $\{e_{\nu}\}$ be a frame in a Hilbert $H$ space with frames bounds $A, B$, i. e. 
$$
A\|f\|_{L_{2}(M)}^{2}\leq \sum_{\nu}|\left<f,e_{\nu}\right>|^{2}\leq B\|f\|_{L_{2}(M)}^{2}, \>\>f\in H.
$$
Given a relaxation parameter $0<\gamma<\frac{2}{B}$, set $\eta=\max\{|1-\gamma A|,\> |1-\gamma B|\}<1$. Let $f_{0}=0$ and define recursively
\begin{equation}
\label{rec}
f_{m}=f_{m-1}+\gamma S(f-f_{m-1}),
\end{equation}
where $S$ is the frame operator which is defined on $H$ by the formula
$$
Sf=\sum_{\nu}\left<f,e_{\nu}\right>e_{\nu}.
$$
In particular, $f_{1}=\gamma Sf=\gamma\sum_{\nu}\left<f, e_{\nu}\right> e_{\nu}$.  Pick $\gamma=\frac{2}{A+B}$.   Then  $\lim_{m\rightarrow \infty}f_{m}=f$ with a geometric rate of convergence, that is, 
\begin{equation}
\label{conv}
\|f-f_{m}\|_{L_{2}(M)}\leq \eta^{m}\|f\|_{L_{2}(M)},
\end{equation}
where 
\begin{equation}
\label{convfac}
\eta=\frac{B-A}{A+B} \le \frac{B/A-1}{2}.
\end{equation}

Let $\psi_{\nu}$ be a projection of the Dirac measure $\delta_{x_{\nu}},\>\>\>x_{\nu}\in  M_{\rho},$ onto space ${\bf E}_{\omega}(L)$. The Plancherel-Polya inequality (\ref{completePlPo100}) shows that $\{\psi_{\nu}\}$ is a frame in  the space ${\bf E}_{\omega}(L)$ and the corresponding frame constants are $ A\rho^{-n/2}$ and $B\rho^{-n/2}$.

\section{Generalized variational splines  on compact Riemannian manifolds with applications to integral geometry}\label{splines}

\subsection{Generalized variational splines  on compact Riemannian manifolds}\label{splines-1} 

We still consider a compact Riemannian manifold $M$ and a differential second order elliptic operator $-L$ which is self-adjoint and non-negative in the natural space $L_{2}(M)$.
For a given finite family of pairwise different submanifolds $\{\mathcal{M}_{\nu}\}_{1}^{N}$ consider the following family of  distributions
 \begin{equation}
 \label{functionals}
 F_{\nu}(f)=\int_{\mathcal{M}_{\nu}}fd\mu_{\nu}
 \end{equation}
 ($d\mu_{\nu}$ is a measure on $\mathcal{M}_{\nu}$) which are well defined at least for functions in $H_{t}(M)$ with $t>n/2$.
 In particular, if every $\mathcal{M}_{\nu}$ is a point $x_{\nu}\in M$, then every $F_{\nu}$ is a Dirac measure $\delta_{x_{\nu}}\>\>\nu=1,...,N,\>\>x_{\nu}\in M.$
 
Given a sequence of complex numbers $v=\{v_{\nu}\},$ $ \nu=1,2,...,N,$ and a $t>n/2$ we consider the
following 

{\bf Variational problem}.

\textsl{Find a function  $u$ from the space $H_{t}(M),\>\>t>n/2,$ which has
the following properties:} \label{var_prob}

\begin{enumerate}

\item $ F_{\nu}(u)=v_{\nu}, \>\>\>\nu=1,2,...,N,\>\>\> v=\{v_{\nu}\},$

\item  $u$ \textsl{minimizes functional $u\rightarrow \|(I-L)
^{t/2}u\|_{L_{2}(M)}$.} 

\end{enumerate}

One can show \cite{Pes04c} that  solution to Variational problem exists and is
unique.  
The following  Independence  Assumption which first appeared in     \cite{Pes04c}      is necessary in order to determine explicit form of the solution.

 \textbf{Independence Assumption.} \textsl{There are functions
  $\vartheta_{\nu}\in C^{\infty}(M)$ such
that}
\begin{equation}
\label{independence}
F_{\nu}(\vartheta_{\mu})=\delta_{\nu\mu},
\end{equation}
\textsl{where $\delta_{\nu\mu}$ is the Kronecker delta.}

Note, that this assumption implies in particular that the
functionals $F_{\nu}$ are linearly independent. Indeed, if for certain coefficients $\gamma_{1},\gamma_{2}, ...,
\gamma_{N}$ we have a relation 
$
\sum _{\nu=1}^{N}\gamma_{\nu}F_{\nu}=0,
$
then for any $1\leq\mu\leq N$ we obtain that 
$
0=\sum_{\nu=1}^{N}\gamma_{\nu}F_{\nu}(\vartheta_{\mu})=\gamma_{\mu}.
$

The families of distributions that satisfy our condition
  include in particular finite families of $\delta$ functionals and their
  derivatives. Another example is a set  of integrals over submanifolds from a   finite family of
submanifolds of any codimension.

 The solution to the Variational
Problem will be called a spline and will be denoted as $s_{t}(v).$
 The set of all solutions for a fixed set of distributions
$F=\{F_{\nu}\}$ and a fixed $t$ will be denoted as $S(F,t).$
\begin{definition}
\label{interpolationdef}
Given a function $f\in H_{t}(M)$ we will say that 
spline $s\in S(F,t)$ interpolates $f$  if
$$
F_{\nu}(f)=F_{\nu}(s).
$$
\end{definition}
The interpolating  spline exists and unique (see below) and  will be denoted as $s_{t}(f).$
Note, that from the point of view of the classical theory of variational
 splines it would be more natural to consider minimization of the
functional
$
u\rightarrow \|L^{t/2}u\|.
$
However, in the case of a general compact manifolds it is easier to
work with the operator $I-L$ since this operator is
invertible. 

Our main result concerning  variational splines is the following (see \cite{Pes04c}).
\begin{theorem}
\label{MainTheorem}
If $t>n/2$, then for any given sequence of scalars 
$v=\{v_{\nu} \}, \>\>\nu=1,2,...N,$ the following statements are
equivalent:

\begin{enumerate}

\item $s_{t}(v)$ is the solution to \textsl{the Variational Problem};

\item  $s_{t}(v)$ satisfies the  the following equation in the sense of
distributions
\begin{equation}
\label{Main Equation-1}
(I-L)^{t}s_{t}(v)=\sum_{\nu=1}^{N}\alpha_{\nu}\overline{F_{\nu}},\>\>\>\alpha_{\nu}=\alpha_{\nu}(s_{t}(v)),\>\>
t>n/2,
\end{equation}
where $\alpha_{1},...,\alpha_{N}$ form a
solution of the $N\times N$ system
\begin{equation}
\label{eq:linsystem}
\sum_{\nu=1}^{N}\beta_{\nu\mu}\alpha_{\nu}=v_{\mu},\>\>\>\alpha_{\nu}=\alpha_{\nu}(s_{t}(v)),\>\>
\mu=1,...,N,
\end{equation}
and
\begin{equation}
\label{eq:linsolution}
\beta_{\nu\mu}=\sum_{j=0}^{\infty}(1+\lambda_{j})^{-t}\overline{F_{\nu}(u_{j})}
F_{\mu}(u_{j}),\>\>\>Lu_{j}=-\lambda_{j}u_{j};
\end{equation}

\item  the Fourier series of $s_{t}(v)$  has  the following form
\begin{equation}
\label{Fourier Series}
s_{t}(v)=\sum_{j=0}^{\infty}c_{j}(s_{t}(v))u_{j},
\end{equation}
where
$$
c_{j}(s_{t}(v))=\left<s_{t}(v),u_{j}\right>=(1+\lambda_{j})^{-t}
\sum_{\nu=1}^{N}\alpha_{\nu}(s_{t}(v))\overline{F_{\nu}(u_{j})}.
$$
\end{enumerate}
\end{theorem}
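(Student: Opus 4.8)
The plan is to realize $H_t(M)$, $t>n/2$, as a Hilbert space with inner product $\langle f,g\rangle_t=\langle (I-L)^{t/2}f,(I-L)^{t/2}g\rangle_{L_2(M)}$; since $-L\ge 0$ the operator $I-L$ has spectrum contained in $[1,\infty)$ and is boundedly invertible, so this is exactly the graph norm of Definition \ref{Sobolevnorm} and $H_t(M)$ is complete. For $t>n/2$ the Sobolev embedding, together with the trace theorem on each submanifold $\mathcal{M}_\nu$ (whose codimension is at most $n$), shows that every $F_\nu$ in (\ref{functionals}) is a bounded linear functional on $H_t(M)$, i.e.\ an element of the dual $H_{-t}(M)$, with Fourier coefficients $F_\nu(u_j)$. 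I take existence and uniqueness of the minimizer $s_t(v)$ as given (\cite{Pes04c}) and concentrate on the three characterizations, proving the cycle $(1)\Rightarrow(2)\Rightarrow(3)\Rightarrow(1)$.

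For $(1)\Rightarrow(2)$, the substantive step, I would use the orthogonality characterization of the least-norm interpolant. The admissible set is the affine subspace $s_t(v)+U$, where $U=\bigcap_{\nu=1}^N\ker F_\nu$ is the tangent space of the constraints. Minimality of $\|\cdot\|_t$ forces the first variation to vanish: for every $h\in U$ the map $\tau\mapsto\|s_t(v)+\tau h\|_t^2$ is minimized at $\tau=0$, whence $\langle s_t(v),h\rangle_t=0$. Moving one factor $(I-L)^{t/2}$ across by self-adjointness, this reads $\langle (I-L)^t s_t(v),h\rangle=0$ for all $h\in U$ in the $H_{-t}$–$H_t$ pairing; that is, the distribution $(I-L)^t s_t(v)$ annihilates $\bigcap_\nu\ker F_\nu$. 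A continuous functional vanishing on the common kernel of finitely many continuous functionals is a linear combination of them, so $(I-L)^t s_t(v)=\sum_{\nu=1}^N\alpha_\nu\overline{F_\nu}$, which is (\ref{Main Equation-1}). The Independence Assumption (\ref{independence}) guarantees the $F_\nu$ are linearly independent, so the $\alpha_\nu$ are uniquely determined.

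To pin down the coefficients I apply $(I-L)^{-t}$ to obtain $s_t(v)=\sum_\nu\alpha_\nu (I-L)^{-t}\overline{F_\nu}$ and impose the interpolation conditions $F_\mu(s_t(v))=v_\mu$. This yields the system (\ref{eq:linsystem}) with entries $\beta_{\nu\mu}=F_\mu\big((I-L)^{-t}\overline{F_\nu}\big)$; expanding in the eigenbasis — $\overline{F_\nu}$ has coefficients $\overline{F_\nu(u_j)}$, the operator $(I-L)^{-t}$ multiplies the $j$-th coefficient by $(1+\lambda_j)^{-t}$, and applying $F_\mu$ brings down a factor $F_\mu(u_j)$ — gives exactly (\ref{eq:linsolution}). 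I would observe that $(\beta_{\nu\mu})$ is the Gram matrix of the Riesz representers $\eta_\nu=(I-L)^{-t}\overline{F_\nu}$ in $H_t(M)$, so the Independence Assumption makes it positive definite and the system (\ref{eq:linsystem}) uniquely solvable.

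The remaining implications are routine. For $(2)\Rightarrow(3)$ I apply $(I-L)^{-t}$ to (\ref{Main Equation-1}) and read off the $j$-th coefficient $c_j(s_t(v))=(1+\lambda_j)^{-t}\sum_\nu\alpha_\nu\overline{F_\nu(u_j)}$, which is (\ref{Fourier Series}), the series converging in $H_t(M)$ because $F_\nu\in H_{-t}(M)$. For $(3)\Rightarrow(1)$, the displayed series defines a function in $H_t(M)$ whose coefficients, together with (\ref{eq:linsystem}), show it satisfies $F_\mu(u)=v_\mu$ and is orthogonal in $\langle\cdot,\cdot\rangle_t$ to $U$; by uniqueness of the least-norm interpolant it must equal $s_t(v)$. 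The main obstacle is the second half of $(1)\Rightarrow(2)$: making rigorous the passage from ``$(I-L)^t s_t(v)$ annihilates $\bigcap_\nu\ker F_\nu$'' to its explicit representation $\sum_\nu\alpha_\nu\overline{F_\nu}$, which needs each $F_\nu$ to genuinely lie in $H_{-t}(M)$ (the $t>n/2$ trace estimate) and to be independent (the Independence Assumption); the conjugation bookkeeping in matching $\beta_{\nu\mu}$ is the only other point demanding care.
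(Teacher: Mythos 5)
Your proof is correct, and it follows essentially the approach the paper relies on (the paper itself only cites \cite{Pes04c} for this theorem, but its own treatment of point-evaluation splines in the sampling section uses exactly this orthogonal-projection framework): realize $H_t(M)$ with the $(I-L)^{t/2}$ inner product, characterize the minimizer over the affine constraint set by orthogonality to $\bigcap_\nu\ker F_\nu$, identify the annihilator of that intersection as the span of the $F_\nu$, and read off $\beta_{\nu\mu}$ as the Gram matrix of the Riesz representers $(I-L)^{-t}\overline{F_\nu}$. The conjugation bookkeeping you flag does come out consistent with (\ref{eq:linsolution}), and your use of the Independence Assumption for unique solvability of (\ref{eq:linsystem}) is exactly where the paper invokes it.
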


\begin{remark}
It is important to note that the system (\ref{eq:linsystem}) is always solvable
according to our uniqueness and existence result for the
Variational Problem.

\end{remark}

\begin{remark}
It is also necessary to note that the series (\ref{eq:linsolution}) is absolutely
convergent if $t$ is sufficiently large. Indeed, if all  functionals $F_{\nu}$ belong to $H_{-t_{0}}(M),\>t_{0}>0,$ 
 we obtain that
for any normalized eigenfunction $u_{j}$ which corresponds
to the eigenvalue $\lambda_{j}$ the following inequality holds
true for some $c=c(M,F)$

$$\left|F_{\nu}(u_{j})\right|\leq
c\|(1-L)^{t_{0}/2}u_{j}\|_{L_{2}(M)}\leq
c(1+\lambda_{j})^{t_{0}/2},\>\>\>F=\{F_{\nu}\}.
$$
So
$$
\left|\overline{F_{\nu}(u_{j})}F_{\mu}(u_{j})\right| \leq
c^{2}(1+\lambda_{j})^{t_{0}},
$$
and
$$
\left|(1+\lambda_{j})^{-t}\overline{F_{\nu}(u_{j})}F_{\mu}(u_{j})\right|\leq
c^{2}(1+\lambda_{j})^{(t_{0}-t)}.
$$
It is known  that the series
$$
\sum_{j}\lambda_{j}^{-\tau},
$$
which defines the $\zeta-$function of an elliptic second order
operator, converges if $\tau>n/2$. This implies absolute
convergence of (\ref{eq:linsolution}) in the case $t>t_{0}+n/2$.

\end{remark}

One can show  that splines provide an optimal approximations to sufficiently smooth  functions.  Namely let $Q(F,f,t,K)$ be the set  of all functions $h$  in
$H_{t}(M)$ such that

\begin{enumerate}

\item  $F_{\nu}(h)=F_{\nu}(f), \nu=1,2,...,N,$

\item  $\|h\|_{ H_{t}(\mathbb{S}^{n})}\leq K,$ for a real $ K\geq \|s_{t}(f)\|_{ H_{t}(\mathbb{S}^{n})}.$

\end{enumerate}

The set $Q(F,f,t,K)$ is  convex,  bounded and closed. 
The following theorem (see \cite{Pes04c}) shows that splines provide an optimal approximations to functions in $Q(F,f,t,K)$.

\begin{theorem}
\label{optim}
The spline $s_{t}(f)$ is the symmetry center of $Q(F,f,t,K)$. This means that for
any $h\in Q(F,f,t,K)$
\begin{equation}
\label{optim-ineq}
  \|s_{t}(f)-h\|_{ H_{t}(\mathbb{S}^{n})}\leq \frac{1}{2} diam \>Q(F,f,t,K).
\end{equation}
\end{theorem}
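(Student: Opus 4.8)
The plan is to prove that $s_t(f)$ is a genuine center of symmetry of $Q(F,f,t,K)$, i.e.\ that the reflection $h\mapsto 2s_t(f)-h$ maps $Q(F,f,t,K)$ into itself. Once this is established the inequality (\ref{optim-ineq}) is immediate: given $h\in Q(F,f,t,K)$ the reflected point $h^{\ast}=2s_t(f)-h$ again lies in $Q(F,f,t,K)$, and since $h-h^{\ast}=2(h-s_t(f))$ we obtain
$$
\|s_t(f)-h\|_{H_t(M)}=\tfrac12\|h-h^{\ast}\|_{H_t(M)}\le\tfrac12\operatorname{diam}Q(F,f,t,K),
$$
because both $h$ and $h^{\ast}$ belong to $Q(F,f,t,K)$. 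The whole matter therefore reduces to verifying the two defining conditions of $Q(F,f,t,K)$ for $h^{\ast}$. Throughout I would work in $H_t(M)$ equipped with the inner product $\langle u,v\rangle_t=\langle (I-L)^{t/2}u,(I-L)^{t/2}v\rangle_{L_2(M)}$, whose norm is exactly the graph norm of Definition \ref{Sobolevnorm} and exactly the functional minimized in the Variational Problem.

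The interpolation condition is the easy one. Since $s_t(f)$ interpolates $f$ we have $F_{\nu}(s_t(f))=F_{\nu}(f)$, and since $h\in Q(F,f,t,K)$ we have $F_{\nu}(h)=F_{\nu}(f)$; linearity of each $F_{\nu}$ then gives $F_{\nu}(h^{\ast})=2F_{\nu}(f)-F_{\nu}(f)=F_{\nu}(f)$ for every $\nu$, so $h^{\ast}$ satisfies condition (1) of $Q(F,f,t,K)$.

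The crux — and the step I expect to carry the real content — is the norm bound $\|h^{\ast}\|_{H_t(M)}\le K$, for which I would extract the orthogonality hidden in the minimization property of $s_t(f)$. Let $V_0=\{u\in H_t(M):F_{\nu}(u)=0,\ \nu=1,\dots,N\}$ be the constraint subspace, so that the interpolants of $f$ form the affine set $s_t(f)+V_0$. Because $s_t(f)$ has least $\langle\cdot,\cdot\rangle_t$-norm on this affine set, for every $g\in V_0$ and every scalar $\varepsilon$ the element $s_t(f)+\varepsilon g$ is still an interpolant, whence $\|s_t(f)+\varepsilon g\|_t^{2}\ge\|s_t(f)\|_t^{2}$; expanding and letting $\varepsilon$ range over $\mathbb{C}$ forces $\langle s_t(f),g\rangle_t=0$ for all $g\in V_0$. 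Writing $g=h-s_t(f)\in V_0$, so that $h^{\ast}=s_t(f)-g$, the Pythagorean identity then yields
$$
\|h^{\ast}\|_{H_t(M)}^{2}=\|s_t(f)-g\|_t^{2}=\|s_t(f)\|_t^{2}+\|g\|_t^{2}=\|s_t(f)+g\|_t^{2}=\|h\|_{H_t(M)}^{2}\le K^{2}.
$$
Thus $h^{\ast}$ satisfies condition (2) as well, so $h^{\ast}\in Q(F,f,t,K)$, the set $Q(F,f,t,K)$ is symmetric about $s_t(f)$, and the diameter estimate of the opening paragraph completes the argument. The only genuinely delicate point is justifying the orthogonality $s_t(f)\perp V_0$ from the least-norm property of the spline; once that is in hand, everything else is bookkeeping with linearity and the parallelogram law.
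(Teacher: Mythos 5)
Your proof is correct, and it is essentially the standard argument for this statement (the paper itself gives no proof, deferring to the cited reference, where the same reflection-plus-orthogonality reasoning is used): the least-norm property of $s_t(f)$ over the affine set $s_t(f)+V_0$ does force $\left<s_t(f),g\right>_t=0$ for all $g\in V_0$, the Pythagorean identity then gives $\|2s_t(f)-h\|_{H_t}=\|h\|_{H_t}\leq K$, and the diameter bound follows. The only cosmetic remark is that the norm in the statement should be read as the $H_t(M)$ graph norm of Definition \ref{Sobolevnorm} (the $\mathbb{S}^{n}$ subscript is a leftover from the spherical application), which is exactly the inner-product norm you work with.
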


\subsection{The Funk-Radon transform on spheres}\label{Radon-spheres}

 We consider the unit sphere
$\mathbb{S}^{n}\subset \mathbb{R}^{n+1}$ and the corresponding space
$L_{2}(\mathbb{S}^{n})$ constructed with respect to normalized and
rotation-invariant measure. 

Let  $Y^{i}_{k}$ be an orthonormal basis of spherical
harmonics in the space $L_{2}(\mathbb{S}^{n})$, where $k=0,1, ... ; i=
1,2,..., d_{n}(k)$ and
$$
d_{n}(k)=(n+2k-1)\frac{(n+k-2)!}{k!(n-1)!}
$$
is the dimension of the subspace of spherical harmonics of degree
$k$. 
Note, that 
\begin{equation}\label{symm}
Y_{k}^{i}(-x)=(-1)^{k}Y_{k}^{i}(x).
\end{equation}
The Fourier decomposition of $f\in L_{2}(\mathbb{S}^{n})$ is
\begin{equation}\label{Fourier-Laplace}
f(x)=\sum_{i,k}c_{i,k}(f)Y^{i}_{k}(x),
\end{equation}
where
$$
c_{i,k}(f)=\int_{\mathbb{S}^{n}}f(x)\overline{Y^{i}_{k}(x)}dx=\left<f,Y_{k}^{i}\right>_{L_{2}(\mathbb{S}^{n})}.
$$
 To every function  $f\in L_{2}(\mathbb{S}^{n})$ the  Funk-Radon transform associates  its integrals over great subspheres:
$$
Rf(\theta^{\perp}\cap \mathbb{S}^{n})=\int_{\theta^{\perp}\cap \mathbb{S}^{n}}f(x)dx,
$$
where $ \theta^{\perp}\cap \mathbb{S}^{n}$ is the great subsphere of $\mathbb{S}^{n}$
whose plane has normal $\theta$.

In what follows we identify unit vector $\theta$ with a point on a "dual" unit sphere $\mathbb{S}^{n}$. 
Thus given a set of points on a dual sphere one can construct a corresponding set of subspheres $\theta^{\perp}\cap \mathbb{S}^{n}$. 

Given a subsphere of co-dimension one, we associate with it a pair of unit vectors (points) $\theta$ and $-\theta$  on the dual unit sphere.  So given a set of subsphere of co-dimension one we can associate with it a set of points $\Xi$ which is symmetric in the sense that $\Xi=-\Xi$. 

\begin{definition}
We will say that a set of equatorial
 subspheres $\{w_{\nu}\},\>\> \nu=1,2,...,N,$ is a $\rho$-lattice if the corresponding set of points on the dual sphere is a $\rho$-lattice $M_{\rho}$.
\end{definition}
 Note that by a previous discussion the set $M_{\rho}$ is symmetric : $M_{\rho}=-M_{\rho}$.

If a function $f\in L_{2}(\mathbb{S}^{n})$ has Fourier coefficients
$c_{i,k}(f)$ then its Radon Transform is given by the formula
$$
R(f)=\pi^{-1/2}\Gamma((n+1)/2)\sum_{i,k}r_{k}c_{i,k}(f)Y_{k}^{i}.
$$
where $Y_{k}^{i}$ are spherical harmonics  and
\begin{equation}\label{r}
r_{k}=(-1)^{k/2}\Gamma((k+1)/2))/\Gamma((k+n)/2)
\end{equation}
if $k$ is even and $r_{k}=0$ if $k$ is odd. It implies in particular that operators $\Delta$ and $R$ commute on a set of smooth functions.  A function $f \in L_{2}(\mathbb{S}^{n})$ is said to be even if its Fourier series (\ref{Fourier-Laplace}) contains only harmonics of even degrees $k=2m$.
Because the coefficients $r_{k}$ have asymptotics
$(-1)^{k/2}(k/2)^{(1-n)/2}$ as $k$ goes to infinity we have the following result.

\begin{theorem}\label{SRT}
The spherical Radon transform $R$ is a continuous operator from the Sobolev space of even functions
$H^{even}_{t}(\mathbb{S}^{n})$ onto the space $H^{even}_{t+(n-1)/2}(\mathbb{S}^{n})$.
Its inverse $R^{-1}$ is a continuous operator from the space
$H^{even}_{t+(n-1)/2}(\mathbb{S}^{n})$ onto the space $H^{even}_{t}(\mathbb{S}^{n})$.  If $f\in H^{even}_{t+(n-1)/2}(\mathbb{S}^{n})$ and it has Fourier series $f=\sum_{i, m}c_{i, 2m}(f)Y_{2m}^{i}$ then 
\begin{equation}\label{inverse}
R^{-1}f=\frac{\sqrt{\pi}}{\Gamma((d+1)/2)}\sum_{i,m}\frac{c_{i,2m}(f)}{r_{2m}}Y_{2m}^{i}.
\end{equation}

\end{theorem}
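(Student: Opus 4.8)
The plan is to treat both $R$ and $R^{-1}$ as Fourier multipliers that are diagonal in the spherical-harmonic basis $\{Y_k^i\}$, and then to reduce the whole theorem to a single two-sided estimate on the multiplier sequence. From the displayed formula for $R$, the operator acts on $f=\sum_{i,k}c_{i,k}(f)Y_k^i$ by sending the coefficient $c_{i,k}(f)$ to $C_n r_k\,c_{i,k}(f)$, where I abbreviate $C_n=\pi^{-1/2}\Gamma((n+1)/2)$. Since $r_k=0$ for odd $k$ while $r_k\neq 0$ for every even $k$ (the quotient of Gamma functions in (\ref{r}) never vanishes for positive arguments), $R$ annihilates all odd harmonics and is injective precisely on the even subspace; this is exactly why the statement is phrased for $H^{even}_t$. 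Recalling that on $\mathbb{S}^n$ the degree-$k$ harmonics are eigenfunctions of $-\Delta$ with eigenvalue $\lambda_k=k(k+n-1)$, the graph norm of Definition \ref{Sobolevnorm} reads $\|f\|_{H_t}^2=\sum_{i,k}(1+\lambda_k)^t|c_{i,k}(f)|^2$, so everything is encoded in the weights.

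The crux is the elementary claim that the quantity $(1+\lambda_k)^{(n-1)/2}|r_k|^2$ is bounded above and below by positive constants, uniformly in even $k$. Here I would combine $1+\lambda_k=1+k(k+n-1)\asymp k^2$ with the stated asymptotic $|r_k|\sim (k/2)^{(1-n)/2}$, which gives $|r_k|^2\sim (k/2)^{-(n-1)}$ and hence $(1+\lambda_k)^{(n-1)/2}|r_k|^2\to 2^{\,n-1}$ as $k\to\infty$. Since this is a strictly positive limit and since $(1+\lambda_k)^{(n-1)/2}|r_k|^2$ is strictly positive for each individual even $k$, there are constants $0<c_1\le c_2<\infty$ with $c_1\le (1+\lambda_k)^{(n-1)/2}|r_k|^2\le c_2$ for all even $k$.

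Granting this bound, the three assertions follow by direct computation with the weighted sums. For the Fourier coefficients $C_n r_k\,c_{i,k}(f)$ of $Rf$ one has
\[
\|Rf\|_{H_{t+(n-1)/2}}^2=C_n^2\sum_{i,k}(1+\lambda_k)^{t}\bigl[(1+\lambda_k)^{(n-1)/2}|r_k|^2\bigr]|c_{i,k}(f)|^2,
\]
and inserting the bracketed factor $\in[c_1,c_2]$ yields $C_n^2 c_1\|f\|_{H_t}^2\le\|Rf\|_{H_{t+(n-1)/2}}^2\le C_n^2 c_2\|f\|_{H_t}^2$. The upper bound gives continuity of $R:H^{even}_t\to H^{even}_{t+(n-1)/2}$, while the lower bound shows $R$ is bounded below, hence injective with closed range. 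Surjectivity I would obtain directly from the explicit candidate inverse (\ref{inverse}), which is the multiplier $g\mapsto \tfrac{1}{C_n r_k}c_{i,k}(g)$ on even $g$: running the identical computation, now with bracketed factor $[(1+\lambda_k)^{(n-1)/2}|r_k|^2]^{-1}\in[c_2^{-1},c_1^{-1}]$, shows $R^{-1}g\in H^{even}_t$ with $\|R^{-1}g\|_{H_t}\asymp\|g\|_{H_{t+(n-1)/2}}$, and a one-line coefficientwise check gives $R R^{-1}=\mathrm{id}$ and $R^{-1}R=\mathrm{id}$ on even functions. Hence $R$ and $R^{-1}$ are mutually inverse continuous bijections between the two even Sobolev spaces, establishing all three claims (the normalizing constants $C_n$ and $C_n^{-1}=\sqrt{\pi}/\Gamma((n+1)/2)$ are harmless throughout).

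I expect the only genuine obstacle to be the uniform two-sided bound on $(1+\lambda_k)^{(n-1)/2}|r_k|^2$: the asymptotic limit $2^{\,n-1}$ controls the tail, but one must separately note that the finitely many small even $k$ cannot spoil the bound, which rests on the non-vanishing $r_k\neq 0$ for every even $k$. Everything else is bookkeeping with the diagonal Sobolev weights.
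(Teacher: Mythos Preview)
Your proposal is correct and follows exactly the approach the paper itself indicates: the paper does not give a detailed proof but simply records, just before the theorem, that ``the coefficients $r_{k}$ have asymptotics $(-1)^{k/2}(k/2)^{(1-n)/2}$ as $k$ goes to infinity'' and asserts the result. Your argument is precisely the standard Fourier-multiplier computation that this sentence is inviting the reader to carry out, and your observation that the finitely many small even $k$ are handled by $r_k\neq 0$ is the only point one needs beyond the tail asymptotic.
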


\subsection{Approximate inversion of the Funk-Radon transform using generalized splines}\label{inverF-R}

 We consider approximate inversion  of the Radon transform on $\mathbb{S}^{n}$ (see subsection \ref{Radon-spheres}) when only a finite number of integrals over equatorial subspheres is given.  
Let $\{w_{\nu}\}, \nu=1,2,...,N, $ be a finite set of equatorial
subspheres on $\mathbb{S}^{n}$ of codimension one (which play the same role as submanifolds $M_{\nu}$ before) and distributions
$F_{\nu}$ are given by
 formulas
 $$
 F_{\nu}(f)=\int_{w_{\nu}}fdx.
 $$
 By solving corresponding variational problem we can find a
spline $s_{t}(f)\in H_{t}(\mathbb{S}^{n})$ such that
$$
F_{\nu}(s_{t}(f))=F_{\nu}(f), \nu=1,2,...,N,
$$
and $s_{t}(f)$ minimizes norm $\left\|(1-\Delta)^{t/2}s_{t}(f)\right\|_{L_{2}(\mathbb{S}^{n})}$ where $\Delta$ is the Laplace-Beltrami operator in $L_{2}(\mathbb{S}^{n})$.

Our Theorem \ref{MainTheorem} in the case of  the spherical Radon transform is summarized in
the following statement.

\begin{theorem}\label{Main-Applic}
For a given $\rho$-lattice $W=\{w_{\nu}\}$ of
equatorial
 subspheres $\{w_{\nu}\}, \nu=1,2,...,N,$
an even smooth function $f$   and any $t>n/2$ define $s_{t}(f)$
by the formula
$$
s_{t}(f)=\sum_{i,k}c_{i,k}(s_{t}(f))Y^{i}_{k},
$$
where
$$
c_{i,k}(s_{t}(f))=(1+\lambda_{i,k})^{-t}\sum_{\nu=1}^{N}\alpha_{\nu}(s_{t}(f))
\int_{w_{\nu}}Y^{i}_{k}dx,
$$
and
$$
\sum_{\nu=1}^{N}b_{\nu\mu}\alpha_{\nu}(s_{t}(f))=v_{\mu},\>\>\>\>\>\>\>
v_{\mu}=\int_{w_{\mu}}fdx,\>\>\> \>\>\>\>\mu=1,2,...,N,
$$
$$b_{\nu\mu}=\sum_{i,k}(1+\lambda_{i,k})^{-t}\int_{w_{\nu}}Y^{i}_{k}dx
\int_{w_{\mu}}Y^{i}_{k}dx.
$$

The function $s_{t}(f)$   has the following properties.

\begin{enumerate}
\item The function $s_{t}(f)$ is even.
\item  Integrals of $s_{t}(f)$ over subspheres $w_{\nu}$ have
prescribed values $v_{\nu}$

$$
\int_{w_{\nu}}s_{t}(f)dx= v_{\nu}, \>\>\>\>\>\>\>\nu=1,2,...,N.
$$

\item  Among all functions that satisfy (2) function $s_{t}(f)$
minimizes
 the Sobolev norm
 $$
 \left\|(I-\Delta)^{t/2}s_{t}(f)\right\|_{L_{2}(\mathbb{S}^{n})}=\left(\sum_{\nu=1}^{N}\alpha_{\nu}(s_{t}(f))v_{\nu}\right)^{1/2}.
 $$

\item The function $s_{t}(f)$ is the center of the convex set
$Q(F,f,t,K)$ of all functions $h$ from $H_{t}(\mathbb{S}^{n})$ that satisfy both the condition 
(2) and the inequality
$$
\left\|(I-\Delta)^{t/2}h\right\|_{L_{2}(\mathbb{S}^{n})}\leq K,
$$
for any fixed $K\geq
\left(\sum_{\nu=1}^{N}\alpha_{\nu}(s_{t}(f))v_{\nu}\right)^{1/2}.$
In other words for any $h\in Q(F, f,t,K)$
$$
\left\|s_{t}(f)-h\right\|_{ H_{t}(\mathbb{S}^{n})}\leq\frac{1}{2}diam\> Q(F, f, t, K).
$$
\end{enumerate}

\end{theorem}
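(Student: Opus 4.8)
The plan is to obtain Theorem \ref{Main-Applic} by specializing the general Theorem \ref{MainTheorem} and Theorem \ref{optim} to $M=\mathbb{S}^{n}$, $L=\Delta$ and the functionals $F_{\nu}(f)=\int_{w_{\nu}}f\,dx$, and then to establish separately the one feature special to this setting, namely the evenness asserted in part (1). First I would confirm that these functionals fall within the general framework. Each $w_{\nu}=\theta_{\nu}^{\perp}\cap\mathbb{S}^{n}$ is a submanifold of codimension one, so $F_{\nu}$ lies in $H_{-t_{0}}(\mathbb{S}^{n})$ for every $t_{0}>1/2$ and is in particular a continuous functional on $H_{t}(\mathbb{S}^{n})$ for $t>n/2$; the series (\ref{eq:linsolution}) defining the Gram entries $b_{\nu\mu}$ then converges by the $\zeta$-function estimate recorded in the remarks following Theorem \ref{MainTheorem} (sharpened, if one wants the full range $t>n/2$, by the explicit decay of $\int_{w_{\nu}}Y^{i}_{k}\,dx$ coming from the Funk--Hecke formula).

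The step requiring genuine care, and the one I expect to be the main obstacle, is the Independence Assumption (\ref{independence}); here I would use the invertibility of the Funk--Radon transform on even functions. Since the subspheres are pairwise distinct, the antipodal pairs $\{\theta_{\nu},-\theta_{\nu}\}$ on the dual sphere are distinct, so for each $\nu$ one may pick an even $h_{\nu}\in C^{\infty}(\mathbb{S}^{n})$ with $h_{\nu}(\theta_{\mu})=\delta_{\nu\mu}$. Setting $\vartheta_{\nu}=R^{-1}h_{\nu}$, which is again even and smooth by Theorem \ref{SRT}, one obtains $F_{\mu}(\vartheta_{\nu})=\int_{w_{\mu}}\vartheta_{\nu}\,dx=(R\vartheta_{\nu})(\theta_{\mu})=h_{\nu}(\theta_{\mu})=\delta_{\nu\mu}$, which is exactly (\ref{independence}). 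In particular the $F_{\nu}$ are linearly independent, so by the existence and uniqueness statement for the Variational Problem the spline $s_{t}(f)$ is well defined and the linear system for $\{\alpha_{\nu}\}$ is solvable.

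With the hypotheses in place, parts (2) and (3) are immediate transcriptions of Theorem \ref{MainTheorem}: the interpolation identity $\int_{w_{\nu}}s_{t}(f)\,dx=v_{\nu}$ is the defining constraint of the Variational Problem, while the stated expressions for $c_{i,k}(s_{t}(f))$, for the system in the $\alpha_{\nu}$, and for $b_{\nu\mu}$ are precisely (\ref{Fourier Series}), (\ref{eq:linsystem}) and (\ref{eq:linsolution}) with $F_{\nu}(u_{j})$ read off as $\int_{w_{\nu}}Y^{i}_{k}\,dx$. For the explicit value of the minimized norm I would pair the distributional equation (\ref{Main Equation-1}), namely $(I-\Delta)^{t}s_{t}(f)=\sum_{\nu}\alpha_{\nu}\overline{F_{\nu}}$, against $s_{t}(f)$ and use $F_{\nu}(s_{t}(f))=v_{\nu}$, which gives
\[
\left\|(I-\Delta)^{t/2}s_{t}(f)\right\|_{L_{2}(\mathbb{S}^{n})}^{2}=\sum_{\nu=1}^{N}\alpha_{\nu}(s_{t}(f))\,v_{\nu}.
\]

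It remains to prove part (1), the genuinely new point, and here I would exploit that each great subsphere $w_{\nu}$, together with its induced measure, is invariant under the antipodal map $x\mapsto-x$; combining this symmetry with (\ref{symm}) yields
\[
\int_{w_{\nu}}Y^{i}_{k}\,dx=\int_{w_{\nu}}Y^{i}_{k}(-x)\,dx=(-1)^{k}\int_{w_{\nu}}Y^{i}_{k}\,dx,
\]
whence $\int_{w_{\nu}}Y^{i}_{k}\,dx=0$ for every odd $k$. Substituting into the formula for $c_{i,k}(s_{t}(f))$ shows that all odd-degree Fourier coefficients of $s_{t}(f)$ vanish, i.e.\ $s_{t}(f)$ is even. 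Finally, part (4) is Theorem \ref{optim} read in the present setting, the admissibility threshold $K\geq(\sum_{\nu}\alpha_{\nu}(s_{t}(f))v_{\nu})^{1/2}$ being exactly $\|s_{t}(f)\|_{H_{t}(\mathbb{S}^{n})}$ by the norm identity just derived. Everything apart from the Independence Assumption is thus either a direct quotation of the general theory or the short symmetry computation for evenness, which is why I locate the only real difficulty in that verification.
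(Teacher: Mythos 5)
Your proposal is correct and follows essentially the same route as the paper: Theorem \ref{Main-Applic} is obtained there by specializing Theorems \ref{MainTheorem} and \ref{optim} to $M=\mathbb{S}^{n}$, $L=\Delta$, $F_{\nu}(f)=\int_{w_{\nu}}f\,dx$, with item (1) handled exactly by your symmetry computation, i.e.\ by observing via (\ref{symm}) that $\int_{w_{\nu}}Y^{i}_{k}\,dx=0$ for odd $k$, so all odd-degree Fourier coefficients of the spline vanish. Your explicit verification of the Independence Assumption through $\vartheta_{\nu}=R^{-1}h_{\nu}$ is a welcome detail that the paper leaves implicit (deferring to \cite{Pes04c}), but it does not change the substance of the argument.
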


To prove item (1) one has to note (see \cite{Pes04c}) that because of (\ref{symm}) integrals of $Y_{k}^{i}$ over great circles are zero as long as $k$ is odd. It implies that $c_{i,k}(s_{t}(f))=0$ for every odd $k$ which means that  decomposition of the corresponding spline $s_{t}(f)$   contains only harmonics $Y_{k}^{i}$ with even $k$.

\subsection{ A sampling theorem for the  Funk-Radon transform of bandlimited functions on $\mathbb{S}^{n}$}\label{pointwise}

According to  Theorem \ref{SRT} if $f\in H_{t}^{even}(\mathbb{S}^{n})$ then $R f\in H_{t+(n-1)/2}^{even}(\mathbb{S}^{n})$. 
Integral of $f$ over a great subsphere  $w_{\nu}$ is the value of $R f$ at points $x_{\nu}$ and $-x_{\nu}$ which are dual to $w_{\nu}$ and belong to the dual sphere.

 For a fixed integer $l\geq 0$ and $t>n/2$ we  apply Theorem \ref{MainTheorem} to the manifold $\mathbb{S}^{n}$ and the set of distributions $\{F_{\nu}\}=\{(\delta_{x_{\nu}}+\delta_{-x_{\nu}})\}$,  where
$
\delta_{x_{\nu}}$ is the Dirac measure. 
 This way we  construct  the spline  $s_{\tau}(R f)$ with
$
\tau=2^{l}n+t+(n-1)/2
$
 which interpolates $Rf$ on $M_{\rho}=\{x_{\nu}\}$ and minimizes functional \begin{equation}\label{functional20}
u\rightarrow \|(1- \Delta)^{\tau/2}u\|_{L_{2}(\mathbb{S}^{n})},
\end{equation}
where $\Delta$ is the Laplace-Beltrami operator in $L_{2}(\mathbb{S}^{n})$.
 Fourier coefficients of $s_{\tau}(R f)$ with respect to the basis $\left\{Y_{k}^{i}\right\}$ can be obtained by using Theorem \ref{MainTheorem}.  
 
 It is clear 
that $s_{\tau}(R f)$ has a representation of the form
\begin{equation}\label{shat}
s_{\tau}(Rf)(x)=\sum_{k}\sum_{i} c_{i}^{2k}(R f)Y_{2k}^{i}(x),\>\>\>x\in \mathbb{S}^{n},
\end{equation}
where
$
c_{i}^{k}(R f)
$
are Fourier coefficients of $s_{\tau}(Rf)$. 
Applying (\ref{inverse}) we obtain that the following function defined on $\mathbb{S}^{n}$ 
\begin{equation}\label{bigS}
S_{\tau}(f)=R^{-1}\left(s_{\tau}(R f)\right)
\end{equation}
has a representation 
$$
S_{\tau}(f)=\frac{\sqrt{\pi}}{\Gamma((n+1)/2)}\sum_{k}\sum_{i} \frac{c^{i}_{2k}(R f)}{r_{2k}}Y_{2k}^{i},
$$
where $r_{2k}$ are  defined in  (\ref{r}).

In the following theorem we assume that a $\rho$-lattice $M_{\rho}=\{x_{\nu}\}$ is a subset of points  on the sphere $\mathbb{S}^{n}$ and $M_{\rho}$  is dual to a collection of great subspheres $\{w_{\nu}\}$. We also assume that functions $S_{\tau}(f)$ constructed using vales of $Rf$ on $M_{\rho}$.
One can prove the following Theorem (see \cite{Pes04c}).

\begin{theorem}\label{last-thm555}
If $t>n/2$ then there exists a constant $C=C(n,t)>0$ such that for  any $\rho$-lattice  $M_{\rho}=\{x_{\nu}\}\subset \mathbb{S}^{n}$ with sufficiently small $\rho>0$  and any sufficiently  smooth
function $f$ on $\mathbb{S}^{n}$ the following inequality holds true
\begin{equation}\label{Th62}
\left\|\left(S_{\tau}(f)-f\right)\right\|_{H_{t}(\mathbb{S}^{n})}\leq
 2(C\rho^{2n})^{2^{l-1}}\|Rf\|_{H_{\tau}(\mathbb{S}^{n})},\>\>\>\tau=2^{l}n+t+(n-1)/2,
\end{equation}
for any $l=0, 1, ... \>\>.\>\>\>$ In particular, if a natural $k$ satisfies the inequality $t>k+n/2$, then 
\begin{equation}
\|S_{\tau}( f)-  f\|_{C^{k}(\mathbb{S}^{n})}\leq 
2\left(C\rho^{2n}\right)^{ 2^{l-1}} \|R f\|_{H_{\tau}(\mathbb{S}^{n})}
\end{equation}
for any $l=0, 1, ... .$ 
\end{theorem}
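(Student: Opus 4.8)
The plan is to transport the whole estimate through the inversion operator $R^{-1}$ and thereby reduce it to a single interpolation bound in the image of the Radon transform. Write $m=t+(n-1)/2$ and $g=Rf$. By Theorem~\ref{SRT} we have $g\in H^{even}_{\tau}(\mathbb{S}^{n})$ and $f=R^{-1}g$, while by definition $S_{\tau}(f)=R^{-1}(s_{\tau}(Rf))$, so that
$$
S_{\tau}(f)-f=R^{-1}\bigl(s_{\tau}(g)-g\bigr).
$$
The continuity of $R^{-1}\colon H^{even}_{m}(\mathbb{S}^{n})\to H^{even}_{t}(\mathbb{S}^{n})$ asserted in Theorem~\ref{SRT} then gives
$$
\|S_{\tau}(f)-f\|_{H_{t}(\mathbb{S}^{n})}\le C_{R}\,\|g-s_{\tau}(g)\|_{H_{m}(\mathbb{S}^{n})},
$$
and it remains only to estimate the interpolation error $e:=g-s_{\tau}(g)$ in the $H_{m}$-norm; the factor $C_{R}$ is absorbed into the final constant $C(n,t)$.

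Two structural facts about $e$ drive the argument. First, since $g=Rf$ is even and the functionals $F_{\nu}=\delta_{x_{\nu}}+\delta_{-x_{\nu}}$, together with the symmetric lattice $M_{\rho}=-M_{\rho}$, respect the even/odd splitting, the spline $s_{\tau}(g)$ furnished by Theorem~\ref{MainTheorem} contains only even harmonics, exactly as in item~(1) of Theorem~\ref{Main-Applic}. Hence $e$ is even, and the interpolation identity $F_{\nu}(e)=0$ reads $2e(x_{\nu})=0$; thus $e$ vanishes on the lattice $M_{\rho}=\{x_{\nu}\}$. Second, $s_{\tau}(g)$ minimizes $\|(I-\Delta)^{\tau/2}\,\cdot\,\|_{L_{2}(\mathbb{S}^{n})}$ among all functions sharing the values $F_{\nu}(g)$, and $g$ is itself admissible, so $\|s_{\tau}(g)\|_{H_{\tau}}\le\|g\|_{H_{\tau}}$ and therefore
$$
\|e\|_{H_{\tau}(\mathbb{S}^{n})}\le\|g\|_{H_{\tau}(\mathbb{S}^{n})}+\|s_{\tau}(g)\|_{H_{\tau}(\mathbb{S}^{n})}\le 2\,\|Rf\|_{H_{\tau}(\mathbb{S}^{n})}.
$$
This is the source of the factor $2$ in \eqref{Th62}.

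The core is a zero-value (Poincar\'{e}-type) estimate for functions vanishing on the lattice, in the sharp form
$$
\|e\|_{H_{m}(\mathbb{S}^{n})}\le(C\rho^{2n})^{2^{l-1}}\,\|e\|_{H_{m+2^{l}n}(\mathbb{S}^{n})}=(C\rho^{2n})^{2^{l-1}}\,\|e\|_{H_{\tau}(\mathbb{S}^{n})},
$$
which I would establish by the same doubling scheme that produces \eqref{SobAppr}. The base case $l=0$ is the shifted Poincar\'{e} inequality $\|e\|_{H_{m}}\le D\rho^{n}\|e\|_{H_{m+n}}$; it follows from the Plancherel-Polya inequality of Theorem~\ref{PP} at the scale $\omega\sim\rho^{-2}$ (the $p=2$ instance of \eqref{completePlPo100}), used to control the bandlimited part of $e$, together with a local Poincar\'{e} inequality on the balls $B(x_{\nu},\rho)$ summed against the bounded multiplicity $N_{M}$ of the cover. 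The inductive step doubles the smoothness gap: since $m+2^{j}n$ is the arithmetic midpoint of $m$ and $m+2^{j+1}n$, log-convexity of the Sobolev norms yields
$$
\|e\|_{H_{m+2^{j}n}}\le\|e\|_{H_{m}}^{1/2}\,\|e\|_{H_{m+2^{j+1}n}}^{1/2},
$$
and substituting this into the gap-$2^{j}n$ estimate produces the gap-$2^{j+1}n$ estimate with the constant squared. Starting from the base case, the factor $D\rho^{n}$ becomes $(D^{2}\rho^{2n})^{2^{l-1}}$, so setting $C=D^{2}$ gives exactly the exponent $2^{l-1}$ claimed. Chaining the three displayed bounds yields \eqref{Th62}; the concluding $C^{k}$-estimate is then immediate from the Sobolev embedding $H_{t}(\mathbb{S}^{n})\hookrightarrow C^{k}(\mathbb{S}^{n})$, which holds precisely because $t>k+n/2$.

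The step I expect to be the real obstacle is the base shifted Poincar\'{e} inequality with the correct power $\rho^{n}$: the fractional norm $\|\cdot\|_{H_{m}}$ is nonlocal and cannot be reduced to the individual balls $B(x_{\nu},\rho)$ by an elementary argument, so one must split $e$ into a bandlimited part, governed by Theorem~\ref{PP} through its vanishing on $M_{\rho}$, and a high-frequency remainder, controlled by a Bernstein inequality at scale $\omega\sim\rho^{-2}$. Once this single inequality is in place, the remaining interpolation doubling is purely algebraic and the passage through $R^{-1}$ and the Sobolev embedding are routine.
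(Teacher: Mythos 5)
Your argument is correct and follows essentially the route the paper intends (via its citation of \cite{Pes04c}): transport the error through $R^{-1}$ using Theorem \ref{SRT}, use evenness of the spline to stay in the domain of $R^{-1}$, use the minimization property to produce the factor $2$ exactly as in the derivation of (\ref{SobAppr}), and control the interpolation error of a function vanishing on $M_{\rho}$ by the Poincar\'e-type inequality sharpened through the norm-doubling/log-convexity induction, which reproduces the exponent $2^{l-1}$. The one ingredient you leave as a sketch --- the shifted base inequality $\|e\|_{H_{m}}\leq D\rho^{n}\|e\|_{H_{m+n}}$ for $e$ vanishing on the lattice --- is precisely what the paper itself imports from its references, and your splitting into a bandlimited part handled by Theorem \ref{PP} and a high-frequency remainder handled by spectral decay is a workable way to obtain it.
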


For an $\omega>0$ let us consider the subspace ${\bf E}_{\omega}^{even}(\mathbb{S}^{n})$ of even $\omega$-bandlimited functions on $\mathbb{S}^{n}$. Clearly, this subspace is invariant under $R$. 

Note, that for functions in ${\bf E}_{\omega}^{even}(\mathbb{S}^{n})$
 the following Bernstein-type inequality holds 
 $$
\|(I-\Delta)^{s}Rf\|_{L^{2}(\mathbb{S}^{n})}  \leq (1+\omega)^{s}\|Rf\|_{L^{2}(\mathbb{S}^{n})} .
$$
As a consequence of the previous Theorem we obtain the next one (see \cite{Pes04c}).
\begin{theorem}(Sampling Theorem For Radon Transform)\label{SSTh}.
\label{SamplingT}
If $t>n/2$ then there exist  constant $C=C(n, t)>0$ such that for  any $\rho$-lattice  $M_{\rho}=\{x_{\nu}\}\subset \mathbb{S}^{n}$ with sufficiently small $\rho>0$  and any  $f\in {\bf E}_{\omega}^{even}(\mathbb{S}^{n})$
  one has the estimate for $\tau=2^{l}n+t+(n-1)/2$
\begin{equation}
\|\left(S_{\tau}(f)-f\right)\|_{H_{t}(\mathbb{S}^{n})}\leq
$$
$$
2(1+\omega)^{t/2+(n-1)/2}\left(C\rho^{2}(1+\omega)\right)^{2^{l-1}n} \| Rf\|_{L_{2}(\mathbb{S}^{n})},
 \end{equation}
for any $l=0, 1, ... \>\>.\>\>\>$ In particular, if a natural $k$ satisfies the inequality $t>k+n/2$, then for  $\tau=2^{l}n+t+(n-1)/2$
\begin{equation}
\|S_{\tau}( f)-  f\|_{C^{k}(\mathbb{S}^{n})}\leq
$$
$$
 2(1+\omega)^{t/2+(n-1)/2}\left(C\rho^{2}(1+\omega)\right)^{2^{l-1}n} \| Rf\|_{L_{2}(\mathbb{S}^{n})},
\end{equation}
for any $l=0, 1, ... .$ 
\end{theorem}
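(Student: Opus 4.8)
The plan is to obtain Theorem~\ref{SSTh} as a direct corollary of Theorem~\ref{last-thm555}. The only discrepancy between the two statements is that the right-hand side of Theorem~\ref{last-thm555} carries the Sobolev norm $\|Rf\|_{H_{\tau}(\mathbb{S}^{n})}$, whereas here we want the weaker quantity $\|Rf\|_{L_{2}(\mathbb{S}^{n})}$. The bridge between them is that for a bandlimited $f$ the function $Rf$ is again bandlimited, so its high Sobolev norm is controlled by its $L_{2}$ norm through the Bernstein-type inequality quoted just before the statement, with the loss measured by a power of $(1+\omega)$.

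First I would record that $Rf\in {\bf E}_{\omega}^{even}(\mathbb{S}^{n})$ whenever $f\in {\bf E}_{\omega}^{even}(\mathbb{S}^{n})$. Indeed, by the multiplier description of the Funk--Radon transform in Theorem~\ref{SRT}, $R$ acts on each spherical harmonic $Y_{k}^{i}$ by the scalar $\pi^{-1/2}\Gamma((n+1)/2)\,r_{k}$, hence it preserves every eigenspace of $-\Delta$ and in particular maps the finite-dimensional space ${\bf E}_{\omega}^{even}(\mathbb{S}^{n})$ into itself. Consequently $Rf$ is a finite linear combination of harmonics $Y_{2m}^{i}$ whose eigenvalues are at most $\omega$, so the Bernstein-type inequality applies: taking $s=\tau/2$ gives
\begin{equation*}
\|Rf\|_{H_{\tau}(\mathbb{S}^{n})}=\|(I-\Delta)^{\tau/2}Rf\|_{L_{2}(\mathbb{S}^{n})}\leq (1+\omega)^{\tau/2}\|Rf\|_{L_{2}(\mathbb{S}^{n})}.
\end{equation*}

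Next I would substitute this bound into both inequalities of Theorem~\ref{last-thm555}. For the $H_{t}$-estimate the factor $2(C\rho^{2n})^{2^{l-1}}\|Rf\|_{H_{\tau}}$ becomes
\begin{equation*}
2(C\rho^{2n})^{2^{l-1}}(1+\omega)^{\tau/2}\|Rf\|_{L_{2}(\mathbb{S}^{n})},
\end{equation*}
and it remains to reorganize the exponents. Writing $\tau/2=2^{l-1}n+t/2+(n-1)/4$ and $(C\rho^{2n})^{2^{l-1}}=C^{2^{l-1}}\rho^{2^{l}n}$, one collects the $\rho$-power together with the factor $(1+\omega)^{2^{l-1}n}$ into $\bigl(C\rho^{2}(1+\omega)\bigr)^{2^{l-1}n}$, after renaming $C$ by $C^{1/n}$ and folding in constants, leaving the prefactor $(1+\omega)^{t/2+(n-1)/4}$. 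Since $1+\omega\geq 1$, this prefactor is dominated by $(1+\omega)^{t/2+(n-1)/2}$, the form appearing in the theorem, and the first inequality follows. The $C^{k}$-estimate is obtained in exactly the same way, starting this time from the second inequality of Theorem~\ref{last-thm555} (valid under $t>k+n/2$ through the Sobolev embedding $H_{t}(\mathbb{S}^{n})\hookrightarrow C^{k}(\mathbb{S}^{n})$ already incorporated there), so no additional embedding step is needed.

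No genuine obstacle arises: the argument is essentially a substitution followed by bookkeeping of exponents. The only point deserving care is the verification that $Rf$ is bandlimited, which is what legitimizes the Bernstein inequality; this rests entirely on the diagonal Fourier-multiplier action of $R$ on spherical harmonics from Theorem~\ref{SRT}. A secondary, purely cosmetic point is the harmless replacement of $(1+\omega)^{(n-1)/4}$ by $(1+\omega)^{(n-1)/2}$, which only weakens the estimate and brings it to the stated shape.
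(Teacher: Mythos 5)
Your proposal is correct and follows exactly the route the paper intends: the paper derives Theorem~\ref{SSTh} from Theorem~\ref{last-thm555} by means of the Bernstein-type inequality $\|(I-\Delta)^{s}Rf\|_{L^{2}(\mathbb{S}^{n})}\leq(1+\omega)^{s}\|Rf\|_{L^{2}(\mathbb{S}^{n})}$ stated immediately before the theorem, which is legitimate precisely because ${\bf E}_{\omega}^{even}(\mathbb{S}^{n})$ is invariant under $R$ (the multiplier action on spherical harmonics), and the remaining step is the same exponent bookkeeping you carry out. Your substitution $\tau/2=2^{l-1}n+t/2+(n-1)/4$, the renaming of the constant, and the harmless enlargement of $(1+\omega)^{(n-1)/4}$ to $(1+\omega)^{(n-1)/2}$ all check out.
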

This Theorem \ref{SSTh} shows that if 
$$
\rho<\left(C(1+\omega)\right)^{-1/2}
$$
then every $f\in {\bf E}_{\omega}^{even}(\mathbb{S}^{n})$ is completely determined by a finite  set of values 
$$
Rf(x_{\nu})=\int_{w_{\nu}} f dx,\>\>\>\>\>\>x_{\nu}\in M_{\rho}.
$$
Moreover, it shows that $f$  can be reconstructed as a limit (when $l$ goes to infinity) of functions $S_{2^{l}n+t+(n-1)/2}( f)$ which were  constructed by using only the set of values  $\left\{R f(x_{\nu})\right\}$ of  the Radon transform $R f$.

\subsection{Hemispherical Radon transform on $\mathbb{S}^{n}$}\label{hem}

 To every function  $f\in L_{2}(\mathbb{S}^{n})$ the hemispherical transform $T$
 assigns a function $Tf\in L_{2}(\mathbb{S}^{n})$ on the dual sphere $\mathbb{S}^{n}$
 which is given
by the formula
$$
(Tf)(\xi)=\int_{\xi\cdot x >0} f(x)dx.
$$

For every function $f\in L_{2}(\mathbb{S}^{n})$ that has Fourier
coefficients $c_{i,j}(f)$ the hemispherical transform can be given
explicitly by the formula
$$
Tf=\pi^{(n-1)/2}\sum_{i,k}m_{k}c^{i}_{k}(f)Y^{i}_{k},
$$
where $m_{k}=0, $ if $k$ is even and
$$
m_{k}=(-1)^{(k-1)/2}\frac {\Gamma(k/2)}{\Gamma((k+n+1)/2))},
$$
 if $k$ is odd.

The transformation $T$ is one to one on the subspace of odd
functions (i.e. $f(x)=-f(-x))$ of a Sobolev space
$H_{t}^{odd}(\mathbb{S}^{n})$ and maps it continuously onto
$H_{t+(n+1)/2}^{odd}(\mathbb{S}^{n})$,
$$
T\left(H_{t}^{odd}(\mathbb{S}^{n})\right)=H_{t+(n+1)/2}^{odd}(\mathbb{S}^{n}).
$$

\subsection{Approximate inversion of the hemispherical Radon transform on $\mathbb{S}^{n}$.}\label{inverHem}

Let $\{h_{\nu}\}, \nu=1,2,...,N, $ be a finite set of hemispheres
on $\mathbb{S}^{n}$. We consider functionals $F_{\nu}$ on $L_{2}(\mathbb{S}^{n})$
which are given by
 formulas
 $$
F_{h_{\nu}}=F_{\nu}=\int_{h_{\nu}}fdx.
 $$

We will assume that the set of points $\Xi=\{\xi_{\nu}\}$ on the
dual sphere $\mathbb{S}_{*}^{n}$ that corresponds to the set of hemispheres
$h_{\nu}$ is symmetric in the sense that $\Xi=-\Xi$.
  Under this assumption we choose a $t>0$  and an odd function
 $f$ and consider the following variational problem: find a
 function $s_{t}(f)\in H_{t}(\mathbb{S}^{n}), t>0$ such that

1) $ F_{\nu}(s_{t}(f))=F_{\nu}(f), \nu=1,2,...,N,$

2) $s_{t}(f)$ minimizes norm $\|(I-\Delta)^{t/2}s_{t}(f)\|_{L_{2}(\mathbb{S}^{n})}$.

Since $\Xi=-\Xi$ and function $f$ is odd, the solution $s_{t}(f)$
will be an odd function.

According to  Theorem \ref{Main-Applic}  the Fourier series of $s_{t}(f)$ is
$$
s_{t}(f)=\sum_{i,j}c_{i,j}(s_{t}(f))Y^{i}_{j},
$$
where the Fourier coefficients $c_{i,j}(s_{t}(f))$ of $s_{t}(f)$
are given by formulas
$$
c_{i,j}(s_{t}(f))=\left<s_{t}(f),Y^{i}_{j}\right>=(1+\lambda_{i,j})^{-t}\sum_{\nu=1}^{N}
\alpha_{\nu}(s_{t}(f))\int_{h_{\nu}}Y^{i}_{j}dx,
$$
where vector $\alpha(s_{t}(f))$ is the solution of the following
$N\times N$ system
$$
\sum_{\nu=1}^{N}b_{\nu\mu}\alpha_{\nu}(s_{t}(f))=\int_{h_{\mu}}f
dx, \mu=1, 2, ... N,
$$
where
$$
b_{\nu\mu}=\sum_{i,j}(1+\lambda_{i,j})^{-t}
\int_{h_{\nu}}Y^{i}_{j} dx\int_{h_{\mu}}Y^{i}_{j} dx.
$$

This spline provides the optimal approximation to $f$ in the sense
that it is the center of the convex set $Q(F, f, t, K)$ of all
functions $\psi$ from $H_{t}(\mathbb{S}^{n})$ that satisfy

\begin{equation}
\int_{h_{\nu}}\psi dx=\int_{h_{\nu}}f dx
\end{equation}
and the inequality
\begin{equation}
\|(I-\Delta)^{t/2}\psi\|_{L_{2}(\mathbb{S}^{n})}\leq K
\end{equation}
for any fixed $K$ that satisfies the inequality
$$
K\geq
\|s_{t}(f)\|_{ H_{t}(\mathbb{S}^{n})}=\left(\sum_{\nu=1}^{N}\alpha_{\nu}(s_{t}(f))\int_{h_{\nu}}f
dx\right)^{1/2}.
 $$

Our results about the hemispherical transform are summarized in
the following theorem.

\begin{theorem}
For a given symmetric set $H=\{h_{\nu}\}, \nu=1,2,...,N,$ of
hemispheres $h_{\nu}$, an odd function $f$ and any $t>0$ define
the function $s_{t}(f)$ by the formula
$$
s_{t}(f)=\sum_{i,j}c_{i,j}(s_{t}(f))Y^{i}_{j},
$$
where
$$
c_{i,j}(s_{t}(f))=(1+\lambda_{i,j})^{-t}\sum_{\nu}^{N}\alpha_{\nu}(s_{t}(f))
\int_{h_{\nu}}Y^{i}_{j}dx,
$$
and
$$
\sum_{\nu=1}^{N}b_{\nu\mu}\alpha_{\nu}(s_{t}(f))=v_{\mu},\>\>\>\>\>\>\>\>\>\>\>
v_{\mu}=\int_{h_{\mu}}fdx, \>\>\>\>\>\>\>\>\mu=1,2,...,N,
$$
$$b_{\nu\mu}=\sum_{i,j}(1+\lambda_{i,j})^{-t}\int_{h_{\nu}}Y^{i}_{j}dx
\int_{h_{\mu}}Y^{i}_{j}dx.
$$

The function $s_{t}(f)$ is odd and it has the following
properties.

\begin{enumerate}

\item  Integrals of the function $s_{t}(f)$ over hemispheres $h_{\nu}$
have  prescribed values $v_{\nu}$:

$$
\left(Ts_{t}(f)\right)(\xi_{\nu})=\int_{h_{\nu}}s_{t}(f)dx=
v_{\nu}, \>\>\>\>\>\>\>\>\>\>\>\nu=1,2,...,N.
$$

\item  Among all functions that satisfy (5.1) function $s_{t}(f)$
minimizes
 the Sobolev norm
 $$ \left\|(I-\Delta)^{t/2}s_{t}(f)\right\|_{L_{2}(\mathbb{S}^{n})}=
 \left(\sum_{\nu=1}^{N}\alpha_{\nu}(s_{t}(f))v_{\nu}\right)^{1/2}.$$

\item  Function $s_{t}(f)$ is the center of the convex set $Q(F,
f,t,K)$ of all functions  $g$  from $H_{t}(\mathbb{S}^{n})$ that satisfy
(5.1) and the inequality
\begin{equation}
\left\|(I-\Delta)^{t/2}g\right\|_{L_{2}(\mathbb{S}^{n})}\leq K,
\end{equation}
for any fixed $K\geq
\left(\sum_{\nu=1}^{N}\alpha_{\nu}(s_{t}(f))v_{\nu}\right)^{1/2}.$
In other words for any $g\in Q(F,f,t,K)$
$$
\|s_{t}(f)-g\|_{ H_{t}(\mathbb{S}^{n})}\leq\frac{1}{2}diam Q(F, f, t, K).
$$
\end{enumerate}
\end{theorem}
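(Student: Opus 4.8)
The plan is to read this statement as the specialization of the general spline results of this section, Theorem \ref{MainTheorem} and Theorem \ref{optim}, to the manifold $M=\mathbb{S}^{n}$ with $L=\Delta$ the Laplace--Beltrami operator and with the family of distributions $F_{\nu}(f)=\int_{h_{\nu}}f\,dx=(Tf)(\xi_{\nu})$. Once it is checked that these particular functionals fit the hypotheses of the general machinery, the explicit formulas for $c_{i,j}(s_{t}(f))$, for $b_{\nu\mu}$, and for the linear system determining $\alpha_{\nu}$ are obtained by pure substitution, and the three listed properties are immediate. This is the exact hemispherical counterpart of Theorem \ref{Main-Applic}.

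First I would verify the boundedness of each $F_{\nu}$. Since a hemisphere $h_{\nu}$ is a full-dimensional region, $f\mapsto\int_{h_{\nu}}f\,dx$ is already bounded on $L_{2}(\mathbb{S}^{n})$, and hence on every $H_{t}(\mathbb{S}^{n})$; this is why the hypothesis here is only $t>0$ rather than $t>n/2$. Next I would establish the Independence Assumption (\ref{independence}) by producing smooth $\vartheta_{\mu}$ with $F_{\nu}(\vartheta_{\mu})=\delta_{\nu\mu}$. Working on the dual sphere and using that $T$ maps $H^{odd}_{t}(\mathbb{S}^{n})$ bijectively onto $H^{odd}_{t+(n+1)/2}(\mathbb{S}^{n})$, I would select smooth odd functions $\phi_{\mu}$ with $\phi_{\mu}(\xi_{\nu})=\delta_{\nu\mu}$ at the distinct points $\xi_{\nu}$ and set $\vartheta_{\mu}=T^{-1}\phi_{\mu}$, so that $F_{\nu}(\vartheta_{\mu})=(T\vartheta_{\mu})(\xi_{\nu})=\phi_{\mu}(\xi_{\nu})=\delta_{\nu\mu}$.

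With the hypotheses secured, I would substitute the eigendata of $\Delta$, namely the eigenfunctions $Y^{i}_{j}$, the eigenvalues $\lambda_{i,j}$, and $F_{\nu}(Y^{i}_{j})=\int_{h_{\nu}}Y^{i}_{j}\,dx$, into (\ref{Fourier Series}), (\ref{eq:linsystem}) and (\ref{eq:linsolution}); this reproduces verbatim the stated expressions. The oddness of $s_{t}(f)$ I would obtain exactly as in the Funk--Radon case: since $\int_{h_{\nu}}Y^{i}_{j}\,dx=(TY^{i}_{j})(\xi_{\nu})=\pi^{(n-1)/2}m_{j}\,Y^{i}_{j}(\xi_{\nu})$ and $m_{j}=0$ for even $j$, every even-degree Fourier coefficient of $s_{t}(f)$ vanishes, so its expansion contains only odd harmonics; equivalently, posing the problem inside $H^{odd}_{t}(\mathbb{S}^{n})$ forces the unique solution to be odd. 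Property (1) is precisely the interpolation constraint of the Variational Problem. The minimization in (2) is the defining property of the spline, and the norm identity $\|(I-\Delta)^{t/2}s_{t}(f)\|^{2}_{L_{2}(\mathbb{S}^{n})}=\sum_{\nu}\alpha_{\nu}(s_{t}(f))v_{\nu}$ follows by pairing the distributional identity (\ref{Main Equation-1}), here $(I-\Delta)^{t}s_{t}(f)=\sum_{\nu}\alpha_{\nu}\overline{F_{\nu}}$, with $s_{t}(f)$ and using $F_{\nu}(s_{t}(f))=v_{\nu}$. Property (3) is a direct invocation of Theorem \ref{optim}.

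The step I expect to require the most care is the Independence Assumption, because the symmetry $\Xi=-\Xi$ together with the oddness of the admissible $f$ introduces redundancy: on odd functions one has $(Tf)(\xi)=-(Tf)(-\xi)$, so for an antipodal pair $\{\xi_{\nu},-\xi_{\nu}\}$ the complementary hemispheres give $F_{\nu}(f)+F_{\nu'}(f)=\int_{\mathbb{S}^{n}}f\,dx=0$, making those functionals linearly dependent. The honest resolution is to pose the entire problem inside the odd subspace $H^{odd}_{t}(\mathbb{S}^{n})$, where $T$ is genuinely invertible, and to retain one representative from each antipodal pair (encoding the antisymmetry into the chosen $\phi_{\mu}$), so that the restricted functionals are independent; the prescribed data are automatically consistent with $v_{\nu'}=-v_{\nu}$, and the general theorem then applies without change.
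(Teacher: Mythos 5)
Your proposal is correct and follows essentially the same route as the paper, which proves this theorem simply by specializing Theorem \ref{MainTheorem} (equivalently Theorem \ref{Main-Applic}) and Theorem \ref{optim} to the functionals $F_{\nu}(f)=\int_{h_{\nu}}f\,dx=(Tf)(\xi_{\nu})$ and by deducing oddness of $s_{t}(f)$ from the symmetry $\Xi=-\Xi$ together with the oddness of $f$. Your added scrutiny of the Independence Assumption and of the antipodal redundancy on the odd subspace goes beyond what the paper records, and is a reasonable way to make the invocation of the general machinery rigorous.
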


\subsection {A sampling theorem for the hemispherical Radon transform on $\mathbb{S}^{n}$.}\label{samplHem}

Applying our Approximation Theorem we obtain the following result
about convergence of interpolants in the case of hemispherical
transform. In this Theorem we use the following parameter $\rho$
$$
\rho=\sup_{\nu}\inf_{\mu} dist(\xi_{\nu},\xi_{\mu}),
\xi_{\nu},\xi_{\mu}\in \Xi, \nu\neq\mu,
$$
as a measure of the density of the set $\Xi$.
\begin{theorem}
There exists a constant $C$ such that  for any $l=0,1,..., $ any
$k<2^{l}n-n/2$ and for any odd function  $f\in H_{k}(\mathbb{S}^{n})$ we have
$$
\|s_{2^{l}n}(f)-f\|_{ H_{k}(\mathbb{S}^{n})}\leq
(C\rho^{2})^{2^{l}n}\|(I-\Delta)^{\tau/2}Tf\|_{L_{2}(\mathbb{S}^{n})}, \>\>l=0,1,...
,\>\>\tau=2^{l}n+(n+1)/2,
$$
and if $f\in {\bf E}_{\omega}^{odd}(S^{n})$ then
$$
\|s_{2^{l}n}(f)-f\|_{ H_{k}(\mathbb{S}^{n})}\leq
(C\rho^{2}(1+\omega))^{2^{l}n}(1+\omega)^{(n+1)/2}\|Tf\|_{L_{2}(\mathbb{S}^{n})},\>\>
l=0,1,... .
$$
\end{theorem}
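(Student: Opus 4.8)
The plan is to regard this statement as the precise hemispherical counterpart of the Funk--Radon sampling results, Theorems \ref{last-thm555} and \ref{SSTh}, and to reuse the same template, with even functions replaced by odd functions, integrals over great subspheres replaced by integrals over hemispheres, and the smoothing order $(n-1)/2$ of $R$ replaced by the smoothing order $(n+1)/2$ of the hemispherical transform $T$. The three moves are as follows: reduce the data to point samples of $Tf$, transfer the interpolation problem across the intertwining multiplier $T$, and then invoke the general variational--spline approximation machinery of Section \ref{Sh-sampling}.

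First I would record that the available numbers are genuinely point values of $Tf$. Indeed $v_\nu=\int_{h_\nu}f\,dx=(Tf)(\xi_\nu)$, where the dual points $\{\xi_\nu\}$ form a symmetric $\rho$--lattice on $\mathbb{S}^n$ in the sense $\Xi=-\Xi$. Since $f$ is odd and the multiplier $m_k$ defining $T$ vanishes on even harmonics, $Tf$ is odd; and the symmetry $\Xi=-\Xi$ forces the interpolating spline $s_{2^l n}(f)$ built in Section \ref{inverHem} to be odd as well, so the whole argument stays inside the odd subspaces, on which $T$ is invertible.

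Next I would use the isomorphism property of $T$ recorded in Section \ref{hem}, namely that $T\colon H^{odd}_{s}(\mathbb{S}^n)\to H^{odd}_{s+(n+1)/2}(\mathbb{S}^n)$ is a topological isomorphism, together with the fact that $T$, being a Fourier multiplier, commutes with $I-\Delta$. This gives $\|s_{2^l n}(f)-f\|_{H_k}\asymp \|Ts_{2^l n}(f)-Tf\|_{H_{k+(n+1)/2}}$, and shows that $Ts_{2^l n}(f)$ interpolates $Tf$ at the nodes $\xi_\nu$ and is, up to uniformly bounded constants, the variational point--spline of $g=Tf$. The hemispherical problem for $f$ is thereby converted into the point--interpolation problem for $g$ already solved in Section \ref{Sh-sampling}. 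Applying the approximation estimates (\ref{SobAppr})--(\ref{PWAppr}) and the associated Reconstruction Theorem to $g$ on the lattice $\{\xi_\nu\}$---whose proof iterates the Poincar\'e/Plancherel--Polya inequality with the doubling index $k=2^l n$, each step trading smoothness for a power of $\rho$---produces the accumulated factor multiplying $\|(I-\Delta)^{\tau/2}Tf\|_{L_2}=\|Tf\|_{H_\tau}$ with $\tau=2^l n+(n+1)/2$, which is the first inequality. Here the constraint $k<2^l n-n/2$ is exactly what guarantees, via Sobolev embedding, that the hemisphere functionals are well defined and that enough smoothness survives to measure the error in $H_k$. For the bandlimited refinement I would note that $Tf$ is again $\omega$--bandlimited, since $m_k$ preserves frequency support, and apply the Bernstein inequality to replace $\|(I-\Delta)^{\tau/2}Tf\|_{L_2}$ by $(1+\omega)^{(n+1)/2}(1+\omega)^{2^l n}\|Tf\|_{L_2}$, yielding the second inequality.

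The main obstacle is the transfer step. One must check that pushing the variational spline through $T$ really returns, up to constants independent of $\rho$, the point--variational spline of $Tf$; equivalently, that the norm equivalence $\|(I-\Delta)^{s/2}T^{-1}w\|_{L_2}\asymp\|w\|_{H_{s+(n+1)/2}}$ holds on the odd subspace with constants that do not degrade the high power of $\rho$. This is delicate because the symbol of $T^{-1}$ is only \emph{asymptotically} $(1+\lambda_k)^{(n+1)/4}$, so the equivalence must be read off from the precise asymptotics of $m_k$ rather than from an exact operator identity, and the even harmonics must be excluded throughout. A secondary technical point is to make the iteration rigorous: the Poincar\'e inequality has to be reapplied at each stage to the difference $Tf-Ts_{2^l n}(f)$, which remains in the null class of functions vanishing on $\{\xi_\nu\}$, so that the powers of $\rho$ accumulate exactly as claimed.
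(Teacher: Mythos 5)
Your proposal follows essentially the same route the paper takes: the paper offers no detailed argument here beyond ``Applying our Approximation Theorem,'' i.e.\ it reduces the hemisphere data to point values of $Tf$ on the symmetric dual lattice, uses that $T$ is an isomorphism of $H_s^{odd}(\mathbb{S}^n)$ onto $H_{s+(n+1)/2}^{odd}(\mathbb{S}^n)$ commuting with $I-\Delta$, applies the spline approximation estimates (\ref{SobAppr})--(\ref{PWAppr}) to the interpolation error of $Tf$, and finishes with the Bernstein inequality for the bandlimited case --- exactly your three moves. Your identification of the transfer step (norm equivalence rather than exact identity under $T$, restricted to the odd subspace) as the only delicate point is accurate, and the resulting constants are harmlessly absorbed into $C$.
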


The first inequality shows that for any odd smooth function $f$
the interpolants $s_{2^{l}n}(f)$ of a fixed order $2^{l}n,\>\>
l=0,1,..., \>\>k<2^{l}n-n/2,$ converge to $f$ in the uniform norm
$C^{k}(M)$ as long as the parameter $\rho$ goes to zero, i.e. the
set $\Xi$ on the dual sphere gets denser.

 The second inequality in the Theorem shows,
that interpolants converge to an odd harmonic polynomial of order
$\omega$ for a fixed set of hemispheres $\Xi$ if
$C\rho^{2}(1+\omega)<1$ and if  $l$ goes to infinity. This
statement is an analog of the Sampling Theorem for the
hemispherical transform.

\subsection{Radon transform on the group of rotations  $SO(3)$}\label{RadSO}
The following information can be found in \cite{GMS}, \cite{Vilenkin}. 
The group  of rotations $SO(3)$ of $\mathbb{R}^{3}$ consists of $3\times 3$ real matrices $U$ such that  $U^TU = I,\>\>\ {\rm det\,}U = 1$.
It is known that any $g\in SO(3)$ has a unique representation of the form
$$ 
g = Z(\gamma)X(\beta)Z(\alpha),\ 0\leq \beta \leq \pi,\ 0\leq \alpha,\,\gamma < 2\pi,
$$
 where 
$$ Z(\theta) = \left(\begin{array}{ccc} \cos \theta & -\sin \theta & 0 \\ \sin \theta & \cos \theta & 0 \\ 0 & 0 & 1 \end{array}\right), \quad \mbox X(\theta) = \left(\begin{array}{ccc} 1 & 0 & 0 \\ 0 & \cos \theta & -\sin \theta  \\ 0 & \sin \theta & \cos \theta \end{array}\right) 
$$
are  rotations about the $Z$- and $X$-axes, respectively. In the coordinates $\alpha, \beta, \gamma$, which are known as Euler angles, the Haar measure  of the group $SO(3)$ is given as 
$$
dg= \frac{1}{8\pi^2} \sin\beta d\alpha\,d\beta\,d\gamma. 
$$ 
In other words the following formula holds:
$$ \int_{SO(3)} f(g)\,dg = \int_0^{2\pi}\int_0^{\pi}\int_0^{2\pi} f(g(\alpha,\,\beta,\,\gamma))\frac{1}{8\pi^2} \sin\beta d\alpha\,d\beta\,d\gamma.
$$ 
Note that if $SO(2)$ is the group of rotations of $\mathbb{R}^{2}$ then the two-dimensional sphere $\mathbb{S}^{2}$ can be identified with the factor $SO(3)/SO(2)$.

We introduce Radon transform $\mathcal{R} f$ of a smooth function $f$ defined on $SO(3)$.\begin{definition}\label{groupRT}
If $\mathbb{S}^{2}$ is the standard unit sphere in $\mathbb{R}^{3}$ , then for a pair $(x,y)\in \mathbb{S}^{2}\times \mathbb{S}^{2}$ the value of the  Radon transform $\mathcal{R} f$ at $(x,y)$ is defined by the formula
\begin{equation}\label{RRR}
 (\mathcal{R} f)(x,y)   = \frac{1}{2\pi} \int_{\{g\in SO(3): x=gy\}} f(g) d\nu_g = 
 $$
 $$
 4\pi \int_{SO(3)} f(g)\delta_y(g^{-1}x) dg = (f*\delta_y)(x),\>\>\> (x,y)\in \mathbb{S}^{2}\times \mathbb{S}^{2},
\end{equation}
where $d\nu_g = 8\pi ^2 dg, $  and  $\delta_{y}$ is the measure concentrated on the set of all $g\in SO(3)$ such that $x=gy$. 
\end{definition}

\begin{remark}

Note, that  \textit{crystallographic X--ray transform} of a function $f$  is a  function on $\mathbb{S}^{2}\times \mathbb{S}^{2}$, which is defined by the following  formula
\begin{equation}\label{PDF}
 Pf(x,y) = \frac{1}{2}(\mathcal{R} f(x,y)+\mathcal{R} f(-x,y)),\>\>\>(x,y)\in \mathbb{S}^{2}\times \mathbb{S}^{2}.
 \end{equation}

\end{remark}

An orthonormal system in $L_2(S^2)$ is provided by the spherical harmonics $\{Y_k^i,\,k\in \mathbb{N}_0,\ i=1,\ldots , 2k+1\}.$ The subspaces  $\mathcal{H}_k:={\rm span}\,\{Y_k^i, i=1,\,\ldots ,\,2k+1\}$ spanned by the spherical harmonics of degree $k$ are the invariant subspaces of the quasi-regular representation 
$ T(g):\,f(x)\mapsto f(g^{-1}\cdot x), $
(where $\cdot $ denotes the canonical action of $SO(3)$ on $S^2$). Representation $T$ decomposes into $(2k+1)$-dimensional irreducible representations $\mathcal{T}_k$ in $\mathcal{H}_k, \>\>k\in  \mathbb{N}_0.$
The corresponding matrix coefficients are the Wigner-polynomials
$$ \mathcal{T}_k^{ij}(g) = \langle \mathcal{T}_k(g)Y_k^i, Y_k^j \rangle. $$
If $\Delta_{SO(3)}$ and $\Delta_{\mathbb{S}^{2}}$ are Laplace-Beltrami operators of invariant metrics on $SO(3)$ and $\mathbb{S}^{2}$ respectively, then 
\begin{equation}\label{eigenvalues}
 \Delta_{SO(3)}\mathcal{T}_k^{ij} = -k(k+1)\mathcal{T}_k^{ij}\quad \mbox{and}\quad \Delta_{S^2}Y_k^i = -k(k+1)Y_k^i. 
\end{equation}
 Using  the fact that $\Delta_{SO(3)}$ on the eigenspace $\mathcal{H}_k$ is just multiplication by $-k(k+1)$ we obtain 
$$ ||f||^2_{L_2(SO(3))} = 
||(4\pi)^{-1}(I-2\Delta_{\mathbb{S}^{2}\times \mathbb{S}^{2}})^{1/4}\mathcal{R} f||^2_{L_2(S^2\times S^2)} , $$
where $\Delta_{\mathbb{S}^{2}\times \mathbb{S}^{2}}=\Delta_1 + \Delta_2 $ is the Laplace-Beltrami  operator of the natural metric on $S^2\times S^2.$  Here $\Delta_{1}$ is acting on the first component and $\Delta_{2}$ is acting on the second component of the product  $\mathbb{S}^{2}\times \mathbb{S}^{2}$.

We define the following norm on the space  $ C^{\infty}(S^2\times S^2)$
$$
 ||| u |||^2 = ((I-2\Delta_{S^2\times S^2} )^{1/2}u,\,u)_{L_2(S^2\times S^2)}.
 $$
Because $\mathcal{R}$ is essentially an isometry between $L_{2}(SO(3))$ with the natural norm and $L_{2}(\mathbb{S}^{2}\times \mathbb{S}^{2})$ with the norm $|||\cdot|||$ the inverse of $\mathcal{R}$  is given by  its  adjoint operator. 
To calculate the adjoint operator we express the Radon transform $\mathcal{R}$ in another way. 
Going back to our problem in crystallography we first state that the great circle $C_{x,y}=\{g\in SO(3): g\cdot x = y\}$ in $SO(3)$ can also be described by the following formula
$$ C_{x,y}= x^{\prime}SO(2)(y^{\prime})^{-1}  := \{x^{\prime}h(y^{\prime})^{-1},\ h\in SO(2)\}, \quad x^{\prime},\,y^{\prime}\in SO(3), $$
where $x^{\prime}\cdot x_0 = x,\ y^{\prime}\cdot x_0 = y$ and $SO(2)$ is  the stabilizer of $x_0\in S^2.$ Hence,
\begin{eqnarray*}
\mathcal{R} f(x,y) = \int_{SO(2)} f(x^{\prime}h(y^{\prime})^{-1})\, dh = 4\pi \int_{C_{x,y}} f(g)\, dg  \\ =
 4\pi \int_{SO(3)} f(g)\delta_y(g^{-1}\cdot x)\, dg, \quad f\in L_2(SO(3)). 
 \end{eqnarray*}
 By using this representation one can find that the $L^2$-adjoint operator of $\mathcal{R}$ is given by
\begin{eqnarray}\label{adjoint_op}
  \left(\mathcal{R}^*u\right)(g) = (4\pi) \int_{S^2} (I-2\Delta_{S^2\times S^2})^{1/2}u(g\cdot
y,\,y)\,dy.
\end{eqnarray}

\begin{definition}[Sobolev spaces on $S^2\times S^2$] The Sobolev space 
  $H_{t}(S^2\times S^2),\,t\in \mathbb{R},$ is defined as the domain of the operator 
  $(I-2\Delta_{S^2\times S^2})^{\tfrac{t}{2}}$ with graph norm
		$$ ||f||_{H_{t}(S^2\times S^2)} = ||(I-2\Delta_{S^2\times S^2})^{\tfrac{t}{2}}f||_{L^2(S^2\times S^2)} ,
		$$
and 
the Sobolev space $H_t^{\Delta}(S^2 \times S^2),\,t\in \mathbb{R},$ is defined as the
 subspace of all functions $f\in H_t(S^2\times S^2)$ such $\Delta_1 f = \Delta_2 f.$ 
\end{definition}

\begin{definition}[Sobolev spaces on $SO(3)$] The Sobolev space $H_t(SO(3)),\,t \in \mathbb{R},$ is defined as the domain of the operator $(I-4\Delta_{SO(3)})^{\tfrac{t}{2}}$ with graph norm
			$$ |||f|||_t = ||(I-4\Delta_{SO(3)})^{\tfrac{t}{2}}f||_{L^2(SO(3))},\>\>f\in L_2(SO(3)). $$
\end{definition}

It is not difficult to prove the following theorems.
\begin{theorem}\label{mapRT}
    For any $t\geq 0$ the Radon transform on $SO(3)$ is an invertible  mapping
        \begin{align}
           \mathcal{R} : H_{t}(SO(3))\to H_{t+\frac{1}{2}}^{\Delta}(S^2\times S^2).
        \end{align}
     and
$$
  f(g) = \int_{S^2} (I-2\Delta_{S^2\times S^2})^{\tfrac{1}{2}}(\mathcal{R} f)(gy, y) dy = \frac{1}{4\pi} (\mathcal{R} ^* \mathcal{R} f )(g),\>\>\>g\in SO(3).
$$
 Thus, $\mathcal{R}^{-1}=\frac{1}{4\pi}\mathcal{R}^{*}. $
\end{theorem}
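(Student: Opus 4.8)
The plan is to diagonalize $\mathcal{R}$ through the Peter--Weyl decomposition and to reduce both assertions to the spectral identities $(\ref{eigenvalues})$. The decisive structural fact I would establish first is that $\mathcal{R}$ \emph{intertwines} the two Laplacians. Since $\mathcal{R}$ is equivariant for the $SO(3)$-action (it is a convolution $f\mapsto f*\delta_y$), it carries the isotypic component spanned by the Wigner polynomials $\{\mathcal{T}_k^{ij}\}$ into the span of the products $\{Y_k^i\otimes\overline{Y_k^j}\}$; concretely, integrating $\mathcal{T}_k^{ij}$ over the fibre $C_{x,y}$ amounts to the projection onto the $SO(2)$-invariant (zonal) vector of $\mathcal{H}_k$, which produces a rank-one, hence product, kernel. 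On such products one has $\Delta_1=\Delta_2=-k(k+1)$, so the image lies in $H^{\Delta}(S^2\times S^2)$ and $I-2\Delta_{S^2\times S^2}$ acts there as multiplication by $1+4k(k+1)$, which by $(\ref{eigenvalues})$ is exactly the value of $I-4\Delta_{SO(3)}$ on $\mathcal{H}_k$. Consequently $(I-2\Delta_{S^2\times S^2})^{s}\mathcal{R}=\mathcal{R}(I-4\Delta_{SO(3)})^{s}$ for every $s$.

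With this intertwining the mapping property is immediate. Commuting the power through $\mathcal{R}$ and invoking the norm identity gives
$$\|\mathcal{R}f\|_{H_{t+1/2}(S^2\times S^2)}=\|(I-2\Delta_{S^2\times S^2})^{1/4}\,\mathcal{R}(I-4\Delta_{SO(3)})^{t/2}f\|_{L^2}=4\pi\,\|f\|_{H_t(SO(3))},$$
so $(4\pi)^{-1}\mathcal{R}$ is an isometry of $H_t(SO(3))$ into $H_{t+1/2}(S^2\times S^2)$; in particular it is injective. Its range lies in $H_{t+1/2}^{\Delta}$ by the previous paragraph, and it is all of $H_{t+1/2}^{\Delta}$: the products $Y_k^i\otimes\overline{Y_k^j}$ are nonzero scalar multiples of $\mathcal{R}\mathcal{T}_k^{ij}$ (the constants cannot vanish, by the norm identity), and they form a complete orthogonal system of $H^{\Delta}$, since $\Delta_1=\Delta_2$ forces $k(k+1)=l(l+1)$, i.e.\ $k=l$, in any bihomogeneous expansion. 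Being an isometry, $\mathcal{R}$ has closed range; containing a complete system, this range equals $H_{t+1/2}^{\Delta}$, so $\mathcal{R}:H_t(SO(3))\to H_{t+1/2}^{\Delta}(S^2\times S^2)$ is invertible.

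For the inversion formula I would exploit that $(4\pi)^{-1}\mathcal{R}$ is now a surjective isometry from $L_2(SO(3))$ onto $H_{1/2}^{\Delta}(S^2\times S^2)$ equipped with the inner product $|||u|||^2=((I-2\Delta_{S^2\times S^2})^{1/2}u,u)$; a surjective isometry of Hilbert spaces is unitary, so its inverse coincides with its adjoint for these inner products. Unwinding this adjoint through the fibre-integral representation $\mathcal{R}f(x,y)=4\pi\int_{C_{x,y}}f$ reproduces exactly the operator $(\ref{adjoint_op})$, the factor $(I-2\Delta_{S^2\times S^2})^{1/2}$ being precisely what converts the $|||\cdot|||$-pairing into the plain $L^2$-pairing; this yields $4\pi\,\mathcal{R}^{-1}=\mathcal{R}^{*}$, and writing out $\mathcal{R}^{*}$ gives the stated reproducing identity $f(g)=\int_{S^2}(I-2\Delta_{S^2\times S^2})^{1/2}(\mathcal{R}f)(gy,y)\,dy$. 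The step demanding the most care, and the main obstacle, is the constant- and index-bookkeeping in this last computation: one must track the $4\pi$ and Haar-measure ($\tfrac{1}{8\pi^2}\sin\beta$) normalizations together with the $g\mapsto g^{-1}$ appearing in the matrix-coefficient identity $\int_{S^2}Y_k^i(gy)\overline{Y_k^j(y)}\,dy=\mathcal{T}_k^{ij}(g^{-1})$, and verify that they combine to give precisely the constant $(4\pi)^{-1}$ rather than some other multiple.
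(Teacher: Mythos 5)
Your proposal is correct and follows essentially the route the paper itself assembles around this theorem: the intertwining relations (\ref{basis-action1})--(\ref{I2}) coming from the eigenvalue identities (\ref{eigenvalues}) (note $1+4k(k+1)=(2k+1)^2$), the isometry from the displayed norm identity, surjectivity onto $H^{\Delta}$ from completeness of the products $Y_k^i\overline{Y_k^j}$, and the identification of the inverse with $\frac{1}{4\pi}\mathcal{R}^{*}$ via the adjoint formula (\ref{adjoint_op}). You are also right that the only genuinely delicate point is the normalization and $g$ versus $g^{-1}$ bookkeeping in the final fibre-integral computation, which the paper itself leaves implicit.
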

One can verify \cite{BEP} that the following relations hold
\begin{equation}\label{basis-action1}
   \mathcal{R}\mathcal{ T}_k^{ij}(x,y) = \mathcal{T}^{i1}_k(x) \overline{\mathcal{T}_k^{j1}(y)} =\frac{4\pi}{2k+1} Y_k^i(x)\overline{Y_k^j(y)},
\end{equation}
\begin{equation}\label{I1}
\Delta_{\mathbb{S}^{2}\times \mathbb{S}^{2}}\mathcal{R}f=2\mathcal{R}\Delta_{SO(3)}f,\>\>\>f\in H_{2}(SO(3)), 
\end{equation}
\begin{equation}\label{I2}
\left(1-2\Delta_{\mathbb{S}^{2}\times \mathbb{S}^{2}}\right)^{t/2}\mathcal{R}f=\mathcal{R}\left(1-4\Delta_{SO(3)}\right)^{t/2}f,\>\>\>f\in H_{t}(SO(3)), \>\>\>t\geq 0,
\end{equation}
\begin{equation}\label{I3}
\left(1-2\Delta_{\mathbb{S}^{2}\times \mathbb{S}^{2}}\right)^{t/2}g=\mathcal{R}\left(1-4\Delta_{SO(3)}\right)^{t/2}\mathcal{R}^{-1}g,
\end{equation}
where $g\in H_{t+1/2}^{\Delta}(\mathbb{S}^{2}\times \mathbb{S}^{2}), \>\>\>t\geq 0.$
\begin{theorem}[Reconstruction formula]\label{Recon}
Let
\begin{align*}
    G(x,y) = \mathcal{R} f(x,y) &= \sum_{k=0}^\infty \sum_{i,j=1}^{2k+1} \widehat G(k)_{ij} Y_k^i(x) 
    \overline{Y_k^j(y)} \in H_{\frac{1}{2}+t}^{\Delta}(S^2\times S^2),\
    t\geq 0,
\end{align*}
be a result of the Radon transform. Then the pre-image $f\in H_t(SO(3)),\ t\geq 0,$ is given by
\begin{align*}
    f &= \sum_{k=0}^\infty \sum_{i,j=1}^{2k+1} \frac{(2k+1)}{4\pi} \widehat G(k)_{ij} 
    \mathcal{T}^{ij}_k 
    \end{align*}
\end{theorem}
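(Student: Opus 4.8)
The plan is to combine the explicit action of $\mathcal{R}$ on the orthogonal basis $\{\mathcal{T}_k^{ij}\}$ recorded in (\ref{basis-action1}) with the invertibility furnished by Theorem \ref{mapRT}. By the Peter--Weyl theorem the Wigner polynomials $\{\mathcal{T}_k^{ij}\}$ form a complete orthogonal system in $L_2(SO(3))$, so every $f\in H_t(SO(3))$ admits an expansion $f=\sum_{k,i,j}\widehat f(k)_{ij}\,\mathcal{T}_k^{ij}$ converging in the $H_t(SO(3))$ norm. The whole argument is then a matter of transporting this expansion through $\mathcal{R}$ and reading off the coefficients.

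First I would apply $\mathcal{R}$ to the expansion of $f$ term by term. Since Theorem \ref{mapRT} asserts that $\mathcal{R}$ is a bounded (indeed invertible) operator from $H_t(SO(3))$ into $H_{t+1/2}^{\Delta}(S^2\times S^2)$, its continuity licenses the interchange of $\mathcal{R}$ with the infinite sum. Substituting (\ref{basis-action1}), namely $\mathcal{R}\mathcal{T}_k^{ij}=\tfrac{4\pi}{2k+1}Y_k^i(x)\overline{Y_k^j(y)}$, then gives
$$
\mathcal{R} f=\sum_{k=0}^\infty\sum_{i,j=1}^{2k+1}\widehat f(k)_{ij}\,\frac{4\pi}{2k+1}\,Y_k^i(x)\overline{Y_k^j(y)}.
$$

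Next I would match this against the hypothesized expansion $G=\mathcal{R} f=\sum_{k,i,j}\widehat G(k)_{ij}\,Y_k^i(x)\overline{Y_k^j(y)}$. The products $Y_k^i(x)\overline{Y_k^j(y)}$ are mutually orthogonal in $L_2(S^2\times S^2)$, being the tensor product of the orthonormal systems $\{Y_k^i\}$ on the two factors, so their coefficients are uniquely determined. Comparing the two series yields $\widehat G(k)_{ij}=\tfrac{4\pi}{2k+1}\widehat f(k)_{ij}$, equivalently $\widehat f(k)_{ij}=\tfrac{2k+1}{4\pi}\widehat G(k)_{ij}$; reinserting these into the expansion of $f$ produces the claimed formula. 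Uniqueness of the preimage is automatic from the invertibility stated in Theorem \ref{mapRT}.

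I expect the only genuinely delicate point to be the justification that the term-by-term application of $\mathcal{R}$ is legitimate and that the resulting series converges in the correct Sobolev norm. This is exactly what the mapping property of Theorem \ref{mapRT} supplies: the factor $(2k+1)$ gained on each harmonic level matches the half-derivative gap between $H_t(SO(3))$ and $H_{t+1/2}^{\Delta}(S^2\times S^2)$ through the eigenvalue relations (\ref{eigenvalues}), so the hypothesis $G\in H_{1/2+t}^{\Delta}(S^2\times S^2)$ is precisely equivalent to $f\in H_t(SO(3))$. Consequently no estimates beyond Theorem \ref{mapRT} are needed to close the argument.
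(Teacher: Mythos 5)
Your argument is correct and is exactly the one the paper intends: it records the identity (\ref{basis-action1}) and the mapping property of Theorem \ref{mapRT} immediately before stating the reconstruction formula precisely so that one expands $f$ in Wigner polynomials, pushes the series through $\mathcal{R}$ term by term, and matches coefficients against the orthogonal system $Y_k^i(x)\overline{Y_k^j(y)}$. Nothing further is needed.
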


\subsection{Approximate inversion of the Radon transform on $SO(3)$ using  generalized splines}\label{approxSO}

Let $\{(x_{1}, y_{1}),...,(x_{N}, y_{N})\}$ be a set of pairs of points from $SO(3)$. In what follows we have to assume that our {\bf Independence Assumption} (\ref{independence}) holds. It takes now the following form: there are smooth functions $\phi_{1},...,\phi_{N}$ on $SO(3)$  with
$$
\mathcal{R} \phi_{\mu}(x_{\nu}, y_{\nu})=\delta_{\nu\mu}.
$$
But it is obvious that for this condition to satisfy it is enough to assume that submanifolds $\mathcal{M}_\nu=x_\nu SO(2) y_\nu^{-1}\subset SO(3)$ are pairwise different (not necessarily disjoint).

Let $f$ be  a function in $H_{t}(SO(3)),\>\>\>$ $t> \frac{1}{2}\left(\dim \> SO(3)\right)=3/2$ and
\begin{equation}\label{constraint}
v_{\nu}=\int_{\mathcal{M}_{\nu}}fdx,\>\>\>\nu=1,...,N,\>\>\>v=\{v_{\nu}\}.
\end{equation}
According to Definition \ref{interpolationdef} we use notation $s_{t}(f)=s_{t}(v)$ for a function in $ H_{t}(SO(3))$  such that for $\mathcal{M}_{\nu}=x_{\nu}  SO(2) y_{\nu}^{-1}$  it satisfies (\ref{constraint}) and minimizes the functional  
\begin{equation}\label{functional}
 u\rightarrow \|(1- 4\Delta_{SO(3)} )
^{t/2}u\|_{L_{2}(SO(3))}.
\end{equation}

In this situation the results of section \ref{splines} can be summarized in the following statement which was proved in \cite{BEP}.

\begin{theorem}
Let $\{(x_{1}, y_{1}),...,(x_{N}, y_{N})\}$ be a subset of  $SO(3)\times SO(3)$ such  that submanifolds $\mathcal{M}_\nu=x_\nu SO(2) y_\nu^{-1}\subset  SO(3),\>\>\nu=1,...,N,$ are pairwise different.

For a function $f$ in $H_{t}(SO(3)),\>\>\>$ $t>3/2,$ and a vector of measurements $v=\left(v_{\nu}\right)_{1}^{N}$ in (\ref{constraint}) 
 the solution of a constrained variational problem (\ref{constraint})-(\ref{functional}) is given by 
\begin{equation}
\label{splineSO}
s_{t}(f)=\sum_{k=0}^{\infty}\sum_{i,j=1}^{2k+1}c_{ij}^{k}(s_{t}(f))\mathcal{T}^{ij}_{k}=\sum_{k=0}^{\infty}trace\left(c_{k}(s_{t}(f))\mathcal{T}_{k}\right),
\end{equation}
where $\mathcal{T}^{ij}_{k}$ are the Wigner polynomials. The Fourier coefficients $c_{k}(s_{t}(f))$ of the solution are given by their matrix entries 
\begin{equation}
c_{ij}^{k}(s_{t}(f))=\frac{4\pi}{(2k+1)(1+k(k+1))^{t}}\sum_{\nu=1}^{N}\alpha_{\nu}(s_{t}(f) Y_k^i(x_{\nu})\overline{Y_k^j(y_{\nu})},
\end{equation}
where $\alpha(s_{t}(f))=\left(\alpha_{\nu}(s_{t}(f))\right)_{1}^{N}\in \mathbb{R}^{N}$ is the solution of 
\begin{equation}
\beta\alpha(s_{t}(f))=f,
\end{equation} 
with $\beta\in \mathbb{R}^{N\times N}$ given by 

\begin{align}
    \beta_{\nu\mu}= \sum_{k=0}^\infty (1+k(k+1))^{-t} C_k^{\frac12}(x_\nu\cdot y_\nu) C_k^{\frac12}(x_\mu\cdot y_\mu),
\end{align}
where  the Gegenbauer polynomials $C_k^{\frac12}$ are given by the formulas ( see  \cite{vilenkin1}, \cite{Vilenkin})
\begin{equation}
 \mathcal C_k^{\frac12} (x\cdot y)
  = \frac{4\pi}{2k+1} \sum_{i=1}^{2k+1} Y^i_k(x)\overline{Y^i_k(y)}
\end{equation}
 for all $x,y\in S^2$ and $k=0, 1, 2...\>\>$.
The function $s_{t}(f)\in H_{t}(SO(3))$ has the following properties:

\begin{enumerate}

\item $s_{t}(f)$ has the prescribed set of measurements $v\!=\left(v_{\nu}\right)_{1}^{N}$ at points $((x_{\nu}, y_{\nu}))_{1}^{N}$;

\item it minimizes the functional (\ref{functional});

\item  the solution (\ref{splineSO}) is optimal in the sense that for every sufficiently large $K>0$ it is the symmetry center of the convex bounded closed set of all functions $h$  in $H_{t}(SO(3))$  with $|||h|||_{H_{t}(SO(3))}\leq K$ which have the same set of measurements $v=\left(v_{\nu}\right)_{1}^{N}$ at points $((x_{\nu}, y_{\nu}))_{1}^{N}$.

\end{enumerate}

\end{theorem}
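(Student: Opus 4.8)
The plan is to recognize this statement as the concrete realization, for $M = SO(3)$, of the abstract variational spline theory already proved in Theorems \ref{MainTheorem} and \ref{optim}. Here the distributions playing the role of the $F_\nu$ are the integrations $F_\nu(f) = \int_{\mathcal{M}_\nu} f\, dg$ over the great circles $\mathcal{M}_\nu = x_\nu SO(2) y_\nu^{-1}$, and the self-adjoint operator is the relevant multiple of $\Delta_{SO(3)}$ whose eigenfunctions are the Wigner polynomials $\mathcal{T}_k^{ij}$ with eigenvalues read off from (\ref{eigenvalues}). Each $F_\nu$ is continuous on $H_t(SO(3))$ because $t > 3/2 = \tfrac12\dim SO(3)$ gives both a bounded restriction to the one-dimensional $\mathcal{M}_\nu$ and the embedding $H_t \hookrightarrow C(SO(3))$. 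The first thing to verify is the Independence Assumption (\ref{independence}): since the $\mathcal{M}_\nu$ are pairwise different, for each $\mu$ one can select a smooth bump $\phi_\mu$ concentrated in a thin tube about $\mathcal{M}_\mu$ and vanishing near every other circle, normalized so that $\int_{\mathcal{M}_\mu}\phi_\mu\, dg = 1$; this delivers $\mathcal{R}\phi_\mu(x_\nu, y_\nu) \propto \delta_{\nu\mu}$, so Theorem \ref{MainTheorem} applies and the interpolating spline $s_t(f)$ exists and is unique.

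Granting the abstract machinery, the work reduces to evaluating the generic ingredients of Theorem \ref{MainTheorem} on the Wigner basis. Using the alternate description $\mathcal{R}f(x,y) = 4\pi\int_{C_{x,y}} f\, dg$ with $\mathcal{M}_\nu = C_{x_\nu, y_\nu}$, I would write $F_\nu(\mathcal{T}_k^{ij}) = \tfrac{1}{4\pi}\mathcal{R}\mathcal{T}_k^{ij}(x_\nu, y_\nu)$ and apply (\ref{basis-action1}) to obtain
$$
F_\nu(\mathcal{T}_k^{ij}) = \frac{1}{2k+1}\, Y_k^i(x_\nu)\,\overline{Y_k^j(y_\nu)}.
$$
Feeding this into the abstract Fourier-coefficient formula (\ref{Fourier Series}) reproduces the stated expression for $c_{ij}^k(s_t(f))$ once the spectral factor $(1+k(k+1))^{-t}$ and the normalization $\tfrac{4\pi}{2k+1}$ are carried along.

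The main computation, and the step I expect to demand the most care, is reducing the Gram-type matrix (\ref{eq:linsolution}), $\beta_{\nu\mu} = \sum_{k,i,j}(1+\lambda_k)^{-t}\,\overline{F_\nu(\mathcal{T}_k^{ij})}\,F_\mu(\mathcal{T}_k^{ij})$, to a closed form. Substituting the values above, the sum over the representation indices factorizes,
$$
\sum_{i,j}\overline{Y_k^i(x_\nu)}\,Y_k^i(x_\mu)\;Y_k^j(y_\nu)\,\overline{Y_k^j(y_\mu)} = \Bigl(\sum_i \overline{Y_k^i(x_\nu)}\,Y_k^i(x_\mu)\Bigr)\Bigl(\sum_j Y_k^j(y_\nu)\,\overline{Y_k^j(y_\mu)}\Bigr),
$$
and each factor collapses by the addition theorem $\sum_i Y_k^i(x)\overline{Y_k^i(y)} = \tfrac{2k+1}{4\pi}C_k^{1/2}(x\cdot y)$ quoted in the statement, leaving a series in the Gegenbauer polynomials $C_k^{1/2}(x_\nu\cdot x_\mu)$ and $C_k^{1/2}(y_\nu\cdot y_\mu)$. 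The delicate points are the bookkeeping of the $4\pi$, $8\pi^2$ and $(2k+1)$ factors coming from the Haar measure, the convention $d\nu_g = 8\pi^2 dg$, and (\ref{basis-action1}), and the absolute convergence of the series: since each $F_\nu$ lies in some $H_{-t_0}(SO(3))$ and $t > 3/2$, the $\zeta$-function estimate in the Remark following Theorem \ref{MainTheorem} guarantees convergence. Solving the resulting $N\times N$ system $\beta\,\alpha(s_t(f)) = v$ then fixes the coefficients $\alpha_\nu$.

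Finally, the three asserted properties need no new argument. Property (1) is the interpolation constraint $F_\nu(s_t(f)) = v_\nu$ built into the Variational Problem; property (2) is the minimization clause of that same problem, i.e.\ item (1) of Theorem \ref{MainTheorem}; and property (3) is the verbatim specialization of Theorem \ref{optim}, identifying $s_t(f)$ as the symmetry center of the convex, bounded, closed set $Q(F,f,t,K)$.
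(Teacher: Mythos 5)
Your approach is exactly the one the paper takes: the text offers no independent proof of this theorem, stating only that it is the specialization to $SO(3)$ of Theorems \ref{MainTheorem} and \ref{optim} as worked out in \cite{BEP}, and your outline --- verify the Independence Assumption for pairwise different circles $x_\nu SO(2)y_\nu^{-1}$, evaluate the functionals $F_\nu$ on the Wigner basis via (\ref{basis-action1}), and collapse the Gram matrix (\ref{eq:linsolution}) by the addition theorem --- is precisely that computation.

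One point deserves attention. Your reduction of $\beta_{\nu\mu}$ correctly factorizes the double sum over $i,j$ into $\bigl(\sum_i \overline{Y_k^i(x_\nu)}Y_k^i(x_\mu)\bigr)\bigl(\sum_j Y_k^j(y_\nu)\overline{Y_k^j(y_\mu)}\bigr)$ and hence lands on Gegenbauer factors $C_k^{1/2}(x_\nu\cdot x_\mu)\,C_k^{1/2}(y_\nu\cdot y_\mu)$ (times $k$-dependent constants), whereas the theorem as printed pairs the points the other way, $C_k^{1/2}(x_\nu\cdot y_\nu)\,C_k^{1/2}(x_\mu\cdot y_\mu)$. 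These are not the same, and you present yours as though it reproduced the printed formula. Your version is the correct one: the printed $\beta_{\nu\mu}$ would depend on $\nu$ only through the scalar $x_\nu\cdot y_\nu$, so two distinct circles with $x_1\cdot y_1=x_2\cdot y_2$ would produce a singular $2\times 2$ system, contradicting the solvability guaranteed under the Independence Assumption (see the Remark after Theorem \ref{MainTheorem}). So treat the displayed $\beta_{\nu\mu}$ as a misprint and state your corrected formula explicitly rather than absorbing the discrepancy into ``bookkeeping.'' That bookkeeping should also record that the $\mathcal{T}_k^{ij}$ have $L_2$-norm $(2k+1)^{-1/2}$, so the orthonormal eigenfunctions $u_j$ entering (\ref{eq:linsolution}) are $\sqrt{2k+1}\,\mathcal{T}_k^{ij}$, and that the linear system in the statement should read $\beta\,\alpha(s_t(f))=v$ rather than $=f$ (you already wrote it correctly). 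Everything else --- the tube-supported bump functions for (\ref{independence}), the convergence of the Gram series from $t>3/2=\tfrac12\dim SO(3)$, and the verbatim transfer of properties (1)--(3) from Theorems \ref{MainTheorem} and \ref{optim} --- is sound.
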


\subsection{ A sampling theorem for Radon transform of bandlimited functions on $SO(3)$}\label{SamplRSO}

According to  Theorem \ref{mapRT} if $f\in H_{t}(SO(3))$ then $\mathcal{R} f\in H_{t+1/2}^{\Delta}(\mathbb{S}^{2}\times \mathbb{S}^{2})$. 
Integral of $f$ over the manifold $x_{\nu}SO(2) y_{\nu}^{-1}$ is the value of $\mathcal{R} f$ at $(x_{\nu}, y_{\nu})$ where $\{(x_{\nu}, y_{\nu})\}=M_{\rho}\subset (\mathbb{S}^{2}\times \mathbb{S}^{2})$ is a $\rho$-lattice.  Note that dimension of the manifold $SO(3)$ is three and dimension  of the manifold $\mathbb{S}^{2}\times \mathbb{S}^{2}$ is four. 

For a fixed natural $l\geq 0$ and $t>3/2$ we  apply Theorem \ref{MainTheorem} to the manifold $\mathbb{S}^{2}\times \mathbb{S}^{2}$ and the set of distributions $F=\{\delta_{\nu}\}$ where
$$
\delta_{\nu}(g)=g(x_{\nu}, y_{\nu}),\>\>\>g\in C^{0}(\mathbb{S}^{2}\times \mathbb{S}^{2}),\>\>(x_{\nu},y_{\nu})\in M_{\rho}\subset \mathbb{S}^{2}\times \mathbb{S}^{2},
$$
to construct  spline  $s_{2^{l+2}+(t+1)}(\mathcal{R} f),$  which interpolates $\mathcal{R} f$ on $M_{\rho}=\{(x_{\nu}, y_{\nu})\}$ and minimizes functional \begin{equation}\label{functional2}
u\rightarrow \|(I- 2\Delta_{\mathbb{S}^{2}\times \mathbb{S}^{2}} )^{2^{l+1}+(t+1)/2}u\|_{L_{2}(  \mathbb{S}^{2}\times \mathbb{S}^{2}  )}.
\end{equation}
 Fourier coefficients of $s_{2^{l+2}+(t+1)}(\mathcal{R} f)$ with respect to the basis $\left\{Y_{k_{1}}^{i}Y_{k_{2}}^{j}\right\}$ can be obtained by using Theorem \ref{MainTheorem}. 
Let $s^{\Delta}_{2^{l+2}+(t+1)}(\mathcal{R} f)$ be the orthogonal projection (in the norm of $H_{t}(\mathbb{S}^{2}\times \mathbb{S}^{2})$) of $s_{2^{l+2}+(t+1)}(\mathcal{R} f)$ onto subspace 
$
H_{2^{l+2}+(t+1)}^{\Delta}(\mathbb{S}^{2}\times \mathbb{S}^{2}).
$
It means that $s^{\Delta}_{2^{l+2}+(t+1)}(\mathcal{R} f)$ has a representation of the form
\begin{equation}\label{shat}
s^{\Delta}_{2^{l+2}+(t+1)}(\mathcal{R} f)(x,y)=\sum_{k}\sum_{ij} c_{ij}^{k}(\mathcal{R} f)Y_k^i(x)\overline{Y_k^j(y)},\>\>\>(x,y)\in \mathbb{S}^{2}\times \mathbb{S}^{2},
\end{equation}
where
$$
c_{ij}^{k}(\mathcal{R} f)=\int_{ \mathbb{S}^{2}\times \mathbb{S}^{2}}s_{2^{l+2}+(t+1)}(\mathcal{R} f)(x,y)\overline{Y_k^i(x)}Y_k^j(y)dx dy
$$
are the Fourier coefficients of $s_{2^{l+2}+(t+1)}(\mathcal{R} f)$. 
Applying (\ref{basis-action1})  we obtain that the following function defined on $SO(3)$ 
\begin{equation}\label{bigS}
S_{2^{l+2}+(t+1)}(f)(x)=\mathcal{R}^{-1}\left(s^{\Delta}_{2^{l+2}+(t+1)}(\mathcal{R} f)\right)(x),
\end{equation}
has a representation 
$$
S_{2^{l+2}+(t+1)}(f)(x)=\sum_{k}\sum_{ij} \frac{2k+1}{4\pi}c_{ij}^{k}(\mathcal{R} f)\mathcal{T}_{k}^{ij}(x).
$$
Let us stress that these functions do not interpolate $f$ in any sense. However, the following approximation results were proved in \cite{BEP}.

\begin{theorem}\label{last-thm}
If $t>3/2$ then there exist a constant $C=C(t)>0$ such that for  any $\rho$-lattice  $M_{\rho}=\{(x_{\nu}, y_{\nu})\}\subset \mathbb{S}^{2}\times \mathbb{S}^{2}$ with sufficiently small $\rho>0$  and any sufficiently  smooth
function $f\in H_{t}(SO(3))$ the following inequality holds true
$$
\|\left(S_{\tau}(f)-f\right)\|_{H_{t}(SO(3))}\leq
C_{1}(l)\rho^{ 2^{l+2}} \|\mathcal{R} f\|_{H_{\tau}(\mathbb{S}^{2}\times \mathbb{S}^{2})},
$$
for $\tau=2^{l+2}+(t+1)$ and any $l=0, 1, ... \>\>.\>\>\>$ In particular, if a natural $k$ satisfies the inequality $t>k+3/2$, then 
\begin{equation}
\|S_{\tau}( f)-  f\|_{C^{k}(SO(3))}\leq 
C_{1}(l)\rho^{ 2^{l+2}}  \|\mathcal{R} f\|_{H_{\tau}(\mathbb{S}^{2}\times \mathbb{S}^{2})}
\end{equation}
for any $m=0, 1, ... .$ 
\end{theorem}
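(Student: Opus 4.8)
The plan is to transport the entire estimate from $SO(3)$ to the product sphere $\mathbb{S}^{2}\times\mathbb{S}^{2}$ by means of the Radon transform, where the variational spline machinery of Section~\ref{splines} and the sampling inequalities of Section~\ref{Sh-sampling} are available, and then to pull the result back. First I would record the exact intertwining isometry. Using the norm identity $\|f\|_{L_{2}(SO(3))}=\tfrac{1}{4\pi}\|(I-2\Delta_{\mathbb{S}^{2}\times\mathbb{S}^{2}})^{1/4}\mathcal{R}f\|_{L_{2}(\mathbb{S}^{2}\times\mathbb{S}^{2})}$ together with the relation (\ref{I2}), and applying it to $h=(I-4\Delta_{SO(3)})^{t/2}(S_{\tau}(f)-f)$, one obtains
$$
\|S_{\tau}(f)-f\|_{H_{t}(SO(3))}=\frac{1}{4\pi}\left\|s^{\Delta}_{\tau}(\mathcal{R}f)-\mathcal{R}f\right\|_{H_{t+1/2}(\mathbb{S}^{2}\times\mathbb{S}^{2})},
$$
since $\mathcal{R}(S_{\tau}(f)-f)=s^{\Delta}_{\tau}(\mathcal{R}f)-\mathcal{R}f$ by the definition of $S_{\tau}(f)$ and Theorem~\ref{mapRT}. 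This converts the problem into an approximation estimate purely on $\mathbb{S}^{2}\times\mathbb{S}^{2}$.

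Next I would remove the $H^{\Delta}$-projection at no cost. The subspace $H^{\Delta}_{\tau}$ consists of those expansions supported on the diagonal indices $k_{1}=k_{2}$, so the orthogonal projection onto it commutes with every power of $(I-2\Delta_{\mathbb{S}^{2}\times\mathbb{S}^{2}})$ and is therefore orthogonal in each Sobolev norm simultaneously. Since $\mathcal{R}f\in H^{\Delta}_{t+1/2}$ by Theorem~\ref{mapRT}, this gives
$$
\left\|s^{\Delta}_{\tau}(\mathcal{R}f)-\mathcal{R}f\right\|_{H_{t+1/2}}\leq\left\|s_{\tau}(\mathcal{R}f)-\mathcal{R}f\right\|_{H_{t+1/2}},
$$
so it suffices to estimate the genuine interpolation error of the spline $s_{\tau}(\mathcal{R}f)$ built from the point functionals $\{\delta_{(x_{\nu},y_{\nu})}\}$. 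Write $w=\mathcal{R}f-s_{\tau}(\mathcal{R}f)$, which vanishes on $M_{\rho}$ because the spline interpolates $\mathcal{R}f$ there. Reading the minimization property of Theorem~\ref{MainTheorem} as the orthogonality $\langle(I-2\Delta)^{\tau/2}s_{\tau}(\mathcal{R}f),(I-2\Delta)^{\tau/2}w\rangle=0$, valid because $w$ vanishes on $M_{\rho}$, yields by Pythagoras the stability bound $\|w\|_{H_{\tau}}\leq\|\mathcal{R}f\|_{H_{\tau}}$.

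The heart of the argument, and the step I expect to be the main obstacle, is the Poincaré-type estimate for functions vanishing on a $\rho$-lattice, namely that for $w$ with $w|_{M_{\rho}}=0$ one has
$$
\|w\|_{H_{t+1/2}(\mathbb{S}^{2}\times\mathbb{S}^{2})}\leq(C\rho)^{2^{l+2}}\|w\|_{H_{\tau}(\mathbb{S}^{2}\times\mathbb{S}^{2})},\qquad\tau=2^{l+2}+(t+1).
$$
This is the quantitative zero-at-samples inequality underlying Theorem~\ref{PP}; its proof proceeds by a doubling iteration of a basic first-order sampling inequality, which is precisely why the admissible gain in smoothness, and hence the power of $\rho$, appears in the form $2^{l}n$ with $n=\dim(\mathbb{S}^{2}\times\mathbb{S}^{2})=4$, i.e.\ $2^{l+2}$. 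The delicate points are: (i) the lattice must be fine relative to the smoothness order, which forces the hypothesis that $\rho$ be sufficiently small and produces a possibly $l$-dependent constant $C_{1}(l)$; (ii) the functionals $\delta_{(x_{\nu},y_{\nu})}$ must be bounded on the relevant Sobolev space, which holds since $t+1/2>2=n/2$, guaranteeing that the Independence Assumption and the spline construction of Theorem~\ref{MainTheorem} apply on the four-dimensional manifold; and (iii) the fractional orders $t+1/2$ and $\tau$ must be accommodated, for which I would interpolate the integer-order doubling estimate and absorb the harmless extra half-derivative (note $\tau-(t+1/2)=2^{l+2}+\tfrac12\geq2^{l+2}$) into the constant. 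Chaining the three displays gives $\|S_{\tau}(f)-f\|_{H_{t}(SO(3))}\leq C_{1}(l)\rho^{2^{l+2}}\|\mathcal{R}f\|_{H_{\tau}(\mathbb{S}^{2}\times\mathbb{S}^{2})}$.

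Finally, the pointwise statement follows by the Sobolev embedding $H_{t}(SO(3))\hookrightarrow C^{k}(SO(3))$, which is valid precisely when $t>k+3/2$ since $\dim SO(3)=3$; applying it to $S_{\tau}(f)-f$ converts the $H_{t}$-bound into the claimed $C^{k}$-bound with the same right-hand side.
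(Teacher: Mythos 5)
Your argument is correct and follows the route the paper intends (and which \cite{BEP} carries out): transport the error to $\mathbb{S}^{2}\times\mathbb{S}^{2}$ via the isometry $\|g\|_{H_{t}(SO(3))}=\tfrac{1}{4\pi}\|\mathcal{R}g\|_{H_{t+1/2}(\mathbb{S}^{2}\times\mathbb{S}^{2})}$, discard the $H^{\Delta}$-projection because it is orthogonal in every Sobolev norm simultaneously, and apply the spline interpolation-error estimate on the four-dimensional product manifold, which is exactly where the exponent $2^{l+2}=2^{l}\dim(\mathbb{S}^{2}\times\mathbb{S}^{2})$ comes from. The one ingredient you assert rather than prove --- the Poincar\'e-type inequality $\|w\|_{H_{t+1/2}}\leq (C\rho)^{2^{l+2}}\|w\|_{H_{\tau}}$ for $w$ vanishing on $M_{\rho}$ --- is precisely what the paper itself imports from \cite{Pes00}, \cite{Pes04b}, \cite{Pes04c} (compare the inequality preceding (\ref{SobAppr}) and the parallel spherical statement, Theorem \ref{last-thm555}), so your proof is at the same level of completeness as the source.
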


For an $\omega>0$ let us consider the space ${\bf E}_{\omega}(SO(3))$ of $\omega$-bandlimited functions on $SO(3)$ i.e. the span of all Wigner functions $T^{ij}_{k}$ with $k(k+1)\leq \omega$. As the formulas (\ref{eigenvalues}) and (\ref{basis-action1}) show the Radon transform of such function is $\omega$-bandlimited on $\mathbb{S}^{2}\times \mathbb{S}^{2}$ in the sense its Fourier expansion involves only functions $Y^{i}_{k}\overline{Y^{j}_{k}}$ which are eigenfunctions of $\Delta_{\mathbb{S}^{2}\times \mathbb{S}^{2}}$ with eigenvalue $-2k(k+1)\geq -2\omega$. Let ${\bf \mathcal{E}}_{\omega}(\mathbb{S}^{2}\times \mathbb{S}^{2})$ be the span of $Y_k^i(\xi)\overline{Y_k^j(\eta)}$ with $k(k+1)\leq  \omega$. Thus
 \begin{equation}\label{band-band}
  \mathcal{R}: {\bf E}_{\omega}(SO(3))\rightarrow{\bf  \mathcal{E}}_{\omega}(\mathbb{S}^{2}\times \mathbb{S}^{2}).
\end{equation}
For $f\in {\bf E}_{\omega}(SO(3))$ the following Bernstein-type inequality holds $$
\|(1-2\Delta_{\mathbb{S}^{2}\times \mathbb{S}^{2}})^{\tau}\mathcal{R} f\|_{L^{2}(\mathbb{S}^{2}\times \mathbb{S}^{2})}  \leq (1+4\omega)^{\tau}\|\mathcal{R} f\|_{L^{2}(\mathbb{S}^{2}\times \mathbb{S}^{2})}.
$$
For the proof of the next theorem we refer to \cite{BEP}.
\begin{theorem}(Sampling Theorem For Radon Transform).
\label{SamplingT}
If $t>3/2$ then there exist a constant $C=C(t)>0$ such that for  any $\rho$-lattice  $M_{\rho}=\{(x_{\nu}, y_{\nu})\}\subset \mathbb{S}^{2}\times \mathbb{S}^{2}$ with sufficiently small $\rho>0$  and any  $f\in {\bf E}_{\omega}(SO(3))$
  one has the estimate
\begin{equation}
\|\left(S_{\tau}(f)-f\right)\|_{H_{t}(SO(3))}\leq
$$
$$
2(1+4\omega)^{(t+1)/2}\left(C\rho^{2}(1+4\omega)\right)^{2^{l+1}}\|\mathcal{R}f\|_{L_{2}(\mathbb{S}^{2}\times \mathbb{S}^{2})},
 \end{equation}
for $\tau=2^{l+2}+(t+1)$ and any $l=0, 1, ... \>\>.\>\>\>$ In particular, if a natural $k$ satisfies the inequality $t>k+3/2$, then 
$$
\|S_{\tau}( f)-  f\|_{C^{k}(SO(3))}\leq
2(1+4\omega)^{(t+1)/2}\left(C\rho^{2}(1+4\omega)\right)^{2^{l+1}}\|\mathcal{R}f\|_{L_{2}(\mathbb{S}^{2}\times \mathbb{S}^{2})},
$$
for any $l=0, 1, ... .$ 
\end{theorem}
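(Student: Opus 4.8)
The plan is to obtain this bandlimited sampling estimate directly from the general approximation estimate of Theorem~\ref{last-thm} by invoking the bandlimited mapping property together with the Bernstein-type inequality, the latter being used to trade the Sobolev norm $\|\mathcal{R}f\|_{H_\tau(\mathbb{S}^2\times\mathbb{S}^2)}$ that appears on the right-hand side of Theorem~\ref{last-thm} for the weaker norm $\|\mathcal{R}f\|_{L_2(\mathbb{S}^2\times\mathbb{S}^2)}$, at the cost of a controlled power of $(1+4\omega)$. The entire argument runs parallel to the passage from Theorem~\ref{last-thm555} to Theorem~\ref{SSTh} carried out on the sphere, so the only genuinely new content is the arithmetic of the exponents combined with the mapping property (\ref{band-band}).

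First I would note that every $f\in{\bf E}_{\omega}(SO(3))$ is a finite linear combination of Wigner polynomials, hence smooth and in particular a member of $H_t(SO(3))$ for the given $t>3/2$; thus the hypotheses of Theorem~\ref{last-thm} are met and it produces
\[
\|S_{\tau}(f)-f\|_{H_{t}(SO(3))}\le C_{1}(l)\,\rho^{2^{l+2}}\,\|\mathcal{R}f\|_{H_{\tau}(\mathbb{S}^{2}\times\mathbb{S}^{2})},\qquad \tau=2^{l+2}+(t+1).
\]
Next I would invoke the mapping property (\ref{band-band}), which guarantees that $\mathcal{R}f$ lies in $\mathcal{E}_{\omega}(\mathbb{S}^{2}\times\mathbb{S}^{2})$, i.e.\ it is bandlimited on the product sphere, so that the Bernstein-type inequality stated just before the theorem applies to it. Using that inequality with exponent $\tau/2$ in place of $\tau$ yields $\|\mathcal{R}f\|_{H_{\tau}(\mathbb{S}^{2}\times\mathbb{S}^{2})}=\|(I-2\Delta_{\mathbb{S}^{2}\times\mathbb{S}^{2}})^{\tau/2}\mathcal{R}f\|_{L_{2}}\le(1+4\omega)^{\tau/2}\|\mathcal{R}f\|_{L_{2}(\mathbb{S}^{2}\times\mathbb{S}^{2})}$. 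Substituting this into the displayed inequality, splitting $(1+4\omega)^{\tau/2}=(1+4\omega)^{(t+1)/2}(1+4\omega)^{2^{l+1}}$ by means of $\tau=2^{l+2}+(t+1)$, and writing $\rho^{2^{l+2}}=(\rho^{2})^{2^{l+1}}$, collects the $\rho$- and $\omega$-factors into $\bigl(C\rho^{2}(1+4\omega)\bigr)^{2^{l+1}}$ and produces exactly the claimed bound.

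The $C^{k}$ version then follows at once from the Sobolev embedding $H_{t}(SO(3))\hookrightarrow C^{k}(SO(3))$, which holds precisely when $t>k+3/2$ since $\dim SO(3)=3$. The step I expect to require the most care is the bookkeeping of the constant $C_{1}(l)$ coming out of Theorem~\ref{last-thm}: to recover the prefactor $2$ and the clean geometric form of the estimate one must know that the proof of that theorem delivers $C_{1}(l)$ in the doubling shape $C_{1}(l)=2\,C^{2^{l+1}}$ with an $l$-independent $C=C(t)$, exactly as in the explicit constant $2(C\rho^{2n})^{2^{l-1}}$ of the spherical analogue Theorem~\ref{last-thm555}. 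With that structure available the identity $2\,C^{2^{l+1}}(\rho^{2})^{2^{l+1}}(1+4\omega)^{2^{l+1}}=2\bigl(C\rho^{2}(1+4\omega)\bigr)^{2^{l+1}}$ closes the argument; the standing requirement that $\rho$ be sufficiently small is precisely what is needed for Theorem~\ref{last-thm} to apply, and choosing $\rho<\bigl(C(1+4\omega)\bigr)^{-1/2}$ forces the factor $\bigl(C\rho^{2}(1+4\omega)\bigr)^{2^{l+1}}$ to tend to $0$ as $l\to\infty$, which is the sampling and reconstruction content of the statement.
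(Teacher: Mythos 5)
Your proposal is correct and follows exactly the route the paper intends: the paper itself defers the proof to \cite{BEP}, but the placement of the Bernstein-type inequality immediately before the statement, together with the parallel derivation of Theorem~\ref{SSTh} from Theorem~\ref{last-thm555} on the sphere, shows that the intended argument is precisely your combination of Theorem~\ref{last-thm} with the Bernstein estimate and the exponent bookkeeping $\tau/2=2^{l+1}+(t+1)/2$. Your one flagged dependency --- that $C_{1}(l)$ must have the doubling form $2\,C^{2^{l+1}}$ with $C$ independent of $l$ --- is indeed what the proof in \cite{BEP} supplies, consistent with the explicit constant $2(C\rho^{2n})^{2^{l-1}}$ in the spherical analogue.
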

This Theorem \ref{SamplingT} shows that if 
$
\rho<\left(C(1+\omega)\right)^{-1/2}
$
then every $f\in {\bf E}_{\omega}(SO(3))$ is completely determined by a finite  set of values 
$$
\mathcal{R}f(x_{\nu}, y_{\nu})=\int_{\mathcal{M}_{\nu}} f,
$$
where $\mathcal{M}_{\nu}=x_{\nu}SO(2) y_{\nu}^{-1}\subset SO(3),\>\>\>\{(x_{\nu}, y_{\nu})\}=M_{\rho}\subset \mathbb{S}^{2}\times \mathbb{S}^{2}.$
Moreover, it shows that $f$  can be reconstructed as a limit (when $l$ goes to infinity) of functions $S_{2^{l+2}+t}( f)$ which were  constructed by using only the values of  the Radon transform $\mathcal{R} f(x_{\nu}, y_{\nu})$.

\section{Bandlimited and localized Parseval frames on compact homogeneous manifolds with applications to integral geometry}\label{b-l-p-f}

We now turn to a special class of the so-called  compact homogeneous manifolds
 \cite{H3}, \cite{H}, \cite{vilenkin1}, \cite{Vilenkin}, \cite{Z}. These manifolds (which have many symmetries) appear in applications very often. For compact homogeneous manifolds we are able to construct Parseval frames which share many properties with orthonormal bases.

\subsection { Compact homogeneous manifolds}\label{homman}
A {\it homogeneous compact manifold} $M$ is a
$C^{\infty}$-compact manifold  on which a compact
Lie group $G$ acts transitively. In this case $M$ is necessary of the form $G/K$,
where $K$ is a closed subgroup of $G$. The notation $L_{2}(M),$ is used for the usual Hilbert spaces,  with  invariant measure  $dx$ on $M$.

The Lie algebra $\textbf{g}$ of a compact Lie group $G$ %hgfei
is then a direct sum
$\textbf{g}=\textbf{a}+[\textbf{g},\textbf{g}]$, where
$\textbf{a}$ is the center of $\textbf{g}$, and
$[\textbf{g},\textbf{g}]$ is a semi-simple algebra. Let $Q$ be a
positive-definite quadratic form on $\textbf{g}$ which, on
$[\textbf{g},\textbf{g}]$, is opposite to the Killing form. Let
$X_{1},...,X_{n}$ be a basis of
$\textbf{g}$, which is orthonormal with respect to $Q$.
The operator
\begin{equation}\label{Casimir}
-X_{1}^{2}-X_{2}^{2}-\    ... -X_{d}^{2},    \ d=\dim\ G,
\end{equation}
is a bi-invariant operator on $G$, which is known as the Casimir operator (\cite{H3}, \cite{Z}).

Every element $X$ of the  Lie algebra of $G$ generates a vector
field on $M$, which we will denote by the same letter $X$. Namely,
for a smooth function $f$ on $M$ one has
$$
 Xf(x)=\lim_{t\rightarrow 0}\frac{f(\exp tX \cdot x)-f(x)}{t}
 $$
for every $x\in M$. In the future we will consider on $M$ only
such vector fields. The translations along integral curves of such
vector fields $X$ on $M$  can be identified with a one-parameter
group of diffeomorphisms of $M$, which is usually denoted as $\exp
tX, -\infty<t<\infty$. At the same time, the one-parameter group
$\exp tX, \>\>\>-\infty<t<\infty,$ can be treated as a strongly
continuous one-parameter group of operators acting on the space $L_{2}(M)$.  These operators act on functions according to the
formula
$$
f\rightarrow f(\exp tX\cdot x), \        t\in \mathbb{R}, \
  f\in L_{2}(M),\        x\in M.
$$
 The
generator of this one-parameter group will be denoted by $D_{X}$,
and the group itself will be denoted by
$$
e^{tD_{X}}f(x)=f(\exp tX\cdot x),\       t\in \mathbb{R}, \
     f\in L_{2}(M), \       x\in M.
$$

According to the general theory of one-parameter groups in Banach
spaces \cite{BB}, Ch.\ I,  the operator $D_{X}$ is a closed
operator in  $L_{2}(M)$.  

Since the operator (\ref{Casimir}) is  bi-invariant 
the corresponding operator on $L_{2}(M)$,
\begin{equation}\label{Casimir1}
\mathcal{L}=-D_{1}^{2}- D_{2}^{2}- ...- D_{d}^{2}, \>\>\>
       D_{j}=D_{X_{j}}, \  1\leq j\leq d, \        d=\dim \ G,
\end{equation}
commutes with all operators $D_{j}=D_{X_{j}}$. Here $D_{j}^{2}f=D_{j}\left(D_{j}f\right)$.
The operator $\mathcal{L}$, which is often called the Laplace operator, is
the image of the Casimir operator under differential of quazi-regular representation in $L_{2}(M)$.  The operator $\mathcal{L}$ is not necessarily the  Laplace-Beltrami operator  of the natural  invariant metric on $M$. But it coincides with such operator at least in the following cases:
1) if $M$ is an $n$-dimensional torus, 2) if the manifold $M$ is itself a compact  semi-simple Lie group  $G$ (\cite{H3}, Ch. II), 3) if $M=G/K$ is a compact symmetric space of
  rank one (\cite{H3},
Ch. II, Theorem 4.11).

The following theorem holds for any compact manifold  \cite{gpes-1}, \cite{pg}.

\begin{theorem}\label{cubature} (Cubature  formula with positive weights)
There exists  a  positive constant $c=c(M)$,    such  that if  $\rho=c\omega^{-1/2}$, then
for any $\rho$-lattice $M_{\rho}=\{x_{\nu}\}$, there exist strictly positive coefficients $\mu_{\nu}>0$ for which the following equality holds for all functions in ${\bf E}_{\omega}(M)$:
\begin{equation}
\label{cubway}
\int_{M}fdx=\sum_{x_{\nu}\in M_{\rho}}\mu_{\nu}f(x_{\nu}).
\end{equation}
Moreover, there exists constants  $\  c_{1}, \  c_{2}, $  such that  the following inequalities hold:
\begin{equation}
c_{1}\omega^{-n/2}\leq \mu_{\nu}\leq c_{2}\omega^{-n/2}, \ n=dim\ M.
\end{equation}
\end{theorem}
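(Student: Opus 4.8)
The plan is to recast the cubature identity as a single linear equation in the finite–dimensional space $V:=\mathbf E_\omega(M)$, to produce candidate weights of the correct order $\omega^{-n/2}$ from the Plancherel–Polya bounds of Theorem~\ref{PP}, and to force positivity using the product structure of Theorem~\ref{prodthm}. Let $\mathcal K_\omega(x,y)=\sum_{\lambda_j\le\omega}u_j(x)\overline{u_j(y)}$ be the reproducing kernel of $V$, so that $f(x_\nu)=\langle f,\mathcal K_\omega(\cdot,x_\nu)\rangle$ for every $f\in V$. On a compact homogeneous manifold the constant function is the eigenfunction for $\lambda_0=0$, hence $\mathbf 1\in V$ and $\int_M f\,dx=\langle f,\mathbf 1\rangle$. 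Consequently the desired formula holds for all $f\in V$ if and only if $\sum_\nu\mu_\nu\,\mathcal K_\omega(\cdot,x_\nu)=\mathbf 1$ in $V$. A (possibly signed) solution exists at once: by the lower bound in (\ref{completePlPo100}) with $p=2$ the sampling map $f\mapsto(f(x_\nu))_\nu$ is injective on $V$, so the kernels $\mathcal K_\omega(\cdot,x_\nu)$ span $V$. What remains — and what the theorem really asserts — is that the weights can be chosen strictly positive and two–sidedly comparable to $\omega^{-n/2}$.

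For the magnitude I would use the frame inverse. Taking the BUPU weights $\mu_\nu^{(0)}=\int_M\theta_\nu\,dx$, the volume estimates $c_1(M)\rho^n\le \mathrm{Vol}(B(x,\rho))\le c_2(M)\rho^n$ together with $\operatorname{supp}\theta_\nu\subseteq B(x_\nu,\rho/2)$ give $\mu_\nu^{(0)}\asymp\rho^n\asymp\omega^{-n/2}$. Define the self-adjoint operator $B$ on $V$ by $\langle Bh,f\rangle=\sum_\nu\mu_\nu^{(0)}h(x_\nu)\overline{f(x_\nu)}$; combining $\mu_\nu^{(0)}\asymp\rho^n$ with the two–sided inequality (\ref{completePlPo100}) for $p=2$ yields $a\|h\|^2\le\langle Bh,h\rangle\le b\|h\|^2$ with $a,b$ independent of $\omega$. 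Hence $B$ is boundedly invertible on $V$, and $g:=B^{-1}\mathbf 1$ produces the exact signed formula with weights $\mu_\nu=\mu_\nu^{(0)}g(x_\nu)$. Here $\|g\|_{L_2}$ is bounded by a constant, and since on a homogeneous manifold the kernel is constant on the diagonal, $\mathcal K_\omega(x,x)\equiv\mathcal N_\omega/\mathrm{Vol}(M)\asymp\omega^{n/2}$, the Nikolskii bound $\|g\|_{L_\infty}^2\le \mathcal K_\omega(x,x)\,\|g\|_{L_2}^2$ controls $g$ pointwise and already delivers the upper estimate $\mu_\nu\le c_2\omega^{-n/2}$.

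Strict positivity is the genuine obstacle, and here I would abandon the signed solution above in favour of a Gauss–type construction built on Theorem~\ref{prodthm}: products of elements of $\mathbf E_\omega(M)$ lie in $\mathbf E_{b\omega}(M)$ for a fixed $b=b(M)$. The idea is to choose $c$ so small that $M_\rho$ is simultaneously a sampling set for the \emph{larger} space $\mathbf E_{b\omega}(M)$ — this is exactly what pins down $\rho=c\omega^{-1/2}$ — and to set $\mu_\nu=\int_M|\psi_\nu|^2\,dx>0$, which is manifestly positive, for localized near-Lagrange functions $\psi_\nu\in\mathbf E_\omega(M)$ manufactured from $B^{-1}$ applied to $\mathcal K_\omega(\cdot,x_\nu)$. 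The task is then to verify that these positive numbers reproduce integrals exactly on $\mathbf E_\omega(M)$: because $|\psi_\nu|^2\in\mathbf E_{b\omega}(M)$ and $M_\rho$ samples $\mathbf E_{b\omega}(M)$, exactness reduces to the near-interpolation property $\psi_\nu(x_\mu)\approx\delta_{\nu\mu}$, while the lower bound $\mu_\nu\ge c_1\omega^{-n/2}$ follows from the local Weyl constant $\mathcal K_\omega(x,x)\asymp\omega^{n/2}$.

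The step I expect to be hardest is reconciling this Gauss-type positivity with the fact that the frame is overdetermined, so that $|M_\rho|\asymp\omega^{n/2}$ need not equal $\dim V$ and exact Lagrange interpolation inside $\mathbf E_\omega(M)$ is not literally available, whence $\int_M|\psi_\nu|^2=\mu_\nu$ holds only up to an error. Equivalently, in dual form one must exclude the existence of an $f\in\mathbf E_\omega(M)$ that is nonnegative at every node yet satisfies $\int_M f<0$; expressing such an $f$ through the product theorem and estimating the discrepancy by a Bernstein inequality combined with (\ref{completePlPo100}) is what forces the oversampling constant $c$ to be small and is the technical heart of the argument. Once the error is absorbed into the choice of $c$, the two–sided bounds $c_1\omega^{-n/2}\le\mu_\nu\le c_2\omega^{-n/2}$ are read off from the volume estimates and the Weyl asymptotics (\ref{W}), exactly as in the count of lattice points in (\ref{rate}).
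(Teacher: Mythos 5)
The paper itself does not prove Theorem \ref{cubature}; it cites \cite{gpes-1} and \cite{pg}, where the argument follows the Mhaskar--Narcowich--Ward scheme. You correctly isolate the decisive dual criterion --- nonnegative weights exist iff no $f\in{\bf E}_{\omega}$ is nonnegative at every node yet has $\int_M f<0$ --- and you name the right tools for it (a Bernstein-type inequality combined with (\ref{completePlPo100}) and a small oversampling constant $c$). But that is precisely the step you leave unexecuted, and the constructions surrounding it do not close the gaps. Concretely: (i) the Nikolskii step in your second paragraph gives only $\|g\|_{L_\infty}\le C\omega^{n/4}$, hence $\mu_\nu=\mu_\nu^{(0)}g(x_\nu)\le C\omega^{-n/2}\cdot\omega^{n/4}=C\omega^{-n/4}$, not the claimed $c_2\omega^{-n/2}$; to get $g(x_\nu)\asymp 1$ you would need $\|B-I\|\lesssim\omega^{-n/4}$, which a lattice of density $\rho\asymp\omega^{-1/2}$ does not provide. (ii) The ansatz $\mu_\nu=\int_M|\psi_\nu|^2$ is positive but, as you concede, not exact; while the Farkas/Hahn--Banach route that does give exactness produces a different weight vector that is a priori only nonnegative (some $\mu_\nu$ could vanish), so strict positivity and the lower bound $\mu_\nu\ge c_1\omega^{-n/2}$ cannot be ``read off from the volume estimates and the Weyl asymptotics'': counting nodes as in (\ref{rate}) bounds $|M_\rho|$, not individual weights.

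The repair, and what \cite{gpes-1}, \cite{pg} actually do, is to make your dual criterion quantitative at the level of the BUPU weights $\mu_\nu^{(0)}=\int_M\theta_\nu\,dx\asymp\rho^n$: for $f\in{\bf E}_{\omega}$,
$$
\left|\int_Mf\,dx-\sum_\nu\mu_\nu^{(0)}f(x_\nu)\right|\le C\rho\,\omega^{1/2}\|f\|_{L_1(M)}\le \epsilon\sum_\nu\mu_\nu^{(0)}|f(x_\nu)|,\qquad \epsilon=C'c<1,
$$
where the first inequality is an $L_1$-Bernstein estimate and the second is the $p=1$ case of (\ref{completePlPo100}). This settles the dual criterion at once, and a Hahn--Banach extension of the functional $f\mapsto\int_Mf$ dominated by $y\mapsto\sum_\nu\mu_\nu^{(0)}y_\nu+\epsilon\sum_\nu\mu_\nu^{(0)}|y_\nu|$ yields exact weights satisfying $(1-\epsilon)\mu_\nu^{(0)}\le\mu_\nu\le(1+\epsilon)\mu_\nu^{(0)}$, which is where \emph{both} bounds $c_1\omega^{-n/2}\le\mu_\nu\le c_2\omega^{-n/2}$ come from simultaneously. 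Your product-theorem/oversampling idea is not needed for this existence statement; it enters only afterwards (as in Theorems \ref{DFT} and \ref{main-frames}) when one applies the cubature formula to products of bandlimited functions.
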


The following important result was obtained in \cite{gpes-1}, \cite{pg}.

\begin{theorem}(Product property)
\label{prodthm}
If $M=G/K$ is a compact homogeneous manifold and $\mathcal{L}$
 is the same as above, then for any $f$ and $g$ belonging
to ${\bf E}_{\omega}(\mathcal{L})$,  their product $fg$ belongs to
${\bf E}_{4d\omega}(\mathcal{L})$, where $d$ is the dimension of the
group $G$.

\end{theorem}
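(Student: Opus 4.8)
The plan is to reduce the statement to a spectral tail estimate and then control that estimate by rewriting $\mathcal{L}^m$ in terms of products of the first-order operators $D_j$. The key observation is that membership in a bandlimited space is detected by the exponential growth rate of the powers of $\mathcal{L}$. Since $fg$ is smooth (each factor is a finite sum of smooth eigenfunctions), writing $fg=\sum_j b_j u_j$ gives $\|\mathcal{L}^m(fg)\|_{L_2(M)}^2=\sum_j \lambda_j^{2m}|b_j|^2$, so that $\limsup_{m\to\infty}\|\mathcal{L}^m(fg)\|_{L_2(M)}^{1/m}$ equals the largest eigenvalue $\lambda_j$ with $b_j\neq 0$. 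Consequently, to prove $fg\in{\bf E}_{4d\omega}(\mathcal{L})$ it suffices to establish a bound $\|\mathcal{L}^m(fg)\|_{L_2(M)}\le C(\omega)\,(4d\omega)^m$ with $C(\omega)$ independent of $m$: the factor $C(\omega)$ disappears upon taking $m$-th roots and letting $m\to\infty$. This device is what lets me discard all dimension-dependent constants and concentrate solely on getting the exponential rate to equal exactly $4d\omega$.

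Two first-order lemmas feed the estimate. First, because $\mathcal{L}$ commutes with each $D_j$, the operator $D_j$ maps each eigenspace of $\mathcal{L}$ into itself (if $\mathcal{L}u=\lambda u$ then $\mathcal{L}(D_j u)=\lambda D_j u$), so $D_j$ preserves ${\bf E}_\omega(\mathcal{L})$. Second, each $e^{tD_j}$ is composition with the one-parameter group of measure-preserving diffeomorphisms $\exp tX_j$ of $M$, so it is a one-parameter group of unitaries and $D_j$ is skew-adjoint. Hence $\langle\mathcal{L}h,h\rangle=\sum_{j=1}^d\|D_j h\|_{L_2(M)}^2$, and for $h\in{\bf E}_\omega(\mathcal{L})$ the right-hand side is at most $\omega\|h\|_{L_2(M)}^2$; in particular $\|D_j h\|_{L_2(M)}\le\sqrt{\omega}\,\|h\|_{L_2(M)}$ for every $j$. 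Iterating these two facts, any ordered word $D_{a_1}\cdots D_{a_r}$ applied to $f\in{\bf E}_\omega(\mathcal{L})$ stays in ${\bf E}_\omega(\mathcal{L})$ and satisfies $\|D_{a_1}\cdots D_{a_r}f\|_{L_2(M)}\le\omega^{r/2}\|f\|_{L_2(M)}$.

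Next I would expand $\mathcal{L}^m=(-1)^m\bigl(\sum_{j=1}^d D_j^2\bigr)^m=(-1)^m\sum_{(j_1,\dots,j_m)}D_{j_1}^2\cdots D_{j_m}^2$, a sum of $d^m$ ordered words, each a product of $2m$ first-order operators $E_1\cdots E_{2m}$. The Leibniz rule for products of derivations gives $E_1\cdots E_{2m}(fg)=\sum_{S\subseteq\{1,\dots,2m\}}(E_S f)(E_{S^c}g)$, where $E_S$ denotes the subproduct indexed by $S$ taken in increasing order; this is a sum of $2^{2m}=4^m$ terms. For each term, $E_S f$ and $E_{S^c}g$ both lie in ${\bf E}_\omega(\mathcal{L})$, so a Nikolskii-type inequality $\|u\|_{L_\infty(M)}\le A(\omega)\|u\|_{L_2(M)}$ for $\omega$-bandlimited functions (with $A(\omega)$ independent of $m$) yields $\|(E_S f)(E_{S^c}g)\|_{L_2(M)}\le A(\omega)\,\omega^{|S|/2}\omega^{(2m-|S|)/2}\|f\|_{L_2(M)}\|g\|_{L_2(M)}=A(\omega)\,\omega^{m}\|f\|_{L_2(M)}\|g\|_{L_2(M)}$. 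Summing over the $4^m$ subsets and the $d^m$ words gives $\|\mathcal{L}^m(fg)\|_{L_2(M)}\le A(\omega)\,(4d\omega)^m\|f\|_{L_2(M)}\|g\|_{L_2(M)}$, which is exactly the bound demanded in the first paragraph, whence $fg\in{\bf E}_{4d\omega}(\mathcal{L})$.

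The main obstacle is not any single estimate but arranging the constant to come out precisely as $4d\omega$. The counting is delicate: the factor $d^m$ is the number of ways of selecting the $m$ Casimir summands, the factor $4^m$ is the Leibniz branching over the $2m$ first-order operators, and the factor $\omega^m$ is produced by the first-order Bernstein inequality regardless of how the $2m$ derivatives are distributed between $f$ and $g$. It is precisely this uniformity in the split that holds the power of $\omega$ at exactly $m$. The $\limsup$ characterization is what makes the whole scheme viable, since it absorbs the $m$-independent Nikolskii constant $A(\omega)$ and the norms $\|f\|_{L_2(M)},\|g\|_{L_2(M)}$ that would otherwise obstruct reading off a clean bandwidth.
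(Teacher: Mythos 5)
Your argument is correct, and it is essentially the proof given in the references \cite{gpes-1}, \cite{pg} to which the paper defers (the survey itself states Theorem \ref{prodthm} without proof): the characterization of ${\bf E}_{\sigma}(\mathcal{L})$ via $\limsup_m\|\mathcal{L}^m u\|^{1/m}\leq\sigma$, the skew-adjointness and ${\bf E}_{\omega}$-invariance of the $D_j$, the Leibniz expansion of $\bigl(\sum_j D_j^2\bigr)^m(fg)$ into $(4d)^m$ words, and the Nikolskii bound to absorb the $L_{\infty}$ factor. The only cosmetic point is that $\limsup_m\|\mathcal{L}^m(fg)\|^{1/m}$ need not equal a ``largest'' eigenvalue when infinitely many $b_j$ are nonzero; what your argument actually uses (and correctly) is that the limsup dominates every $\lambda_j$ with $b_j\neq 0$, so the bound $C(\omega)(4d\omega)^m$ forces $b_j=0$ for $\lambda_j>4d\omega$.
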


\subsection{Bandlimited and localized Parseval frames on homogeneous manifolds}\label{frames}

In this section  we assume that a manifold $M$ is homogeneous in the sense that it is of the form  $M=G/K,$  where $G$ is a compact Lie group and $K$ is its closed subgroup (see subsection \ref{homman}).  In this situation we consider  spaces of bandlimited functions ${\bf E}_{\omega}(\mathcal{L}),\>\>\>\omega>0,$ with respect to  the Casimir operator $\mathcal{L}$ that was defined in (\ref{Casimir1}).
Our goal is to construct a  tight bandlimited and localized frame in the space $L_{2}(M)$.

 Let $g\in C^{\infty}(\mathbb{R}_{+})$ be a monotonic function such that $supp\>g\subset [0,\>  2^{2}], $ and $g(s)=1$ for $s\in [0,\>1], \>0\leq g(s)\leq 1, \>s>0.$ Setting  $G(s)=g(s)-g(2^{2}s)$ implies that $0\leq G(s)\leq 1, \>\>s\in supp\  G\subset [2^{-2},\>2^{2}].$  Clearly, $supp\ G(2^{-2j}s)\subset [2^{2j-2}, 2^{2j+2}],\>j\geq 1.$ For the functions
 $
 \Phi(s)=\sqrt{g(s)}, \>\>\Phi(2^{-2j}s)=\sqrt{G(2^{-2j}s)},\>\>j\geq 1, \>\>\>
 $
 one has 
 $$
 \sum_{j\geq 0}\left(\Phi(2^{-2j}s)\right)^{2}=1, \>\>s\geq 0.
 $$
 Using the spectral theorem for $\mathcal{L}$ one  obtains
$$
\sum_{j\geq 0} \Phi^2(2^{-2j}\mathcal{L})f = f,\>\>f \in L_{2}(M),
$$
and taking inner product with $f$ gives
\begin{equation}
\label{norm equality-0}
\|f\|_{L_{2}(M)}^{2}=\sum_{j\geq 0}\left< \Phi^2(2^{-2j}\mathcal{L})f,f\right>=\sum_{j\geq 0}\|\Phi(2^{-2j}\mathcal{L})f\|_{L_{2}(M)}^{2} .
\end{equation}
Moreover, since the function $  \Phi(2^{-2j}s)$ has its support in  $
[2^{2j-2},\>\>2^{2j+2}]$ the elements $ \Phi(2^{-2j}\mathcal{L})f $ are bandlimited to  $
[2^{2j-2},\>\>2^{2j+2}]$.

Expanding $f \in L_{2}(M)$ in terms of eigenfunctions of $\mathcal{L}$ we obtain
$$
\Phi({2^{-2j} \mathcal{L}})f= \sum_i \Phi(2^{-2j}\lambda_i)c_i(f) u_i,\>\>\>c_i(f)=\left<f, u_{i}\right>.
$$  
Since for every $j$ function $\Phi(2^{-2j}s)$ is supported in the interval $[2^{2j-2}, 2^{2j+2}]$ the function $\Phi({2^{-2j}\mathcal{L}})f(x),\>\>x\in M, $ is bandlimited and belongs to ${\bf E}_{2^{2j+2}}({\mathcal{L}})$.
But then the function 
$\overline{\Phi({2^{-2j} \mathcal{L}})f(x)}$ is also in ${\bf E}_{2^{2j+2}}({ \mathcal{L}})$.
Since 
$$
|\Phi({2^{-2j}\mathcal{L}})f(x)|^2=\left[\Phi({2^{-2j} \mathcal{L}})f(x)\right]\left[\overline{\Phi({2^{-2j} \mathcal{L}})f(x)}\right],
$$
one can use  Theorem \ref{prodthm} to conclude that  
$
|\Phi({2^{-2j} \mathcal{L}})f|^2\in 
{\bf E}_{4d2^{2j+2}}({ \mathcal{L}}),
$
where $d=\dim G,\>\>{M}=G/K$.

To summarize, we proved, that for every $f\in L_{2}(M)$ we have the following decomposition 
\begin{equation}
\label{addto1sc}
\sum_{j\geq 0} \|\Phi({2^{-2j} \mathcal{L}})f\|^{2}_{L_{2}(M)} = \|f\|^{2}_{L_{2}(M)},\>\>\>\>\>
\left|\Phi({2^{-2j}\mathcal{L}})f(x)\right|^2\in 
{\bf E}_{4d2^{2j+2}}({ \mathcal{L}}).
\end{equation}
The next objective is to perform a discretization step. According to our Theorem \ref{cubature}  there exists a constant $c=c(M)>0$ such that for all integer $j$ if  
\begin{equation}
\label{rhoj}
\rho_j = c(4d2^{2j+2}+1)^{-1/2}\sim 2^{-j},\>\>\>d=\dim G, \>\>\>M=G/K, 
\end{equation}
then for any  $\rho_{j}$-lattice $M_{\rho_{j}}$ one can find coefficients $b_{j,k}$ with
\begin{equation}
\label{wtest1}
b_{j,k}\sim \rho_j^{n},\>\>\>n=\dim M,
\end{equation}
for which the following exact cubature formula holds
\begin{equation}
\label{cubl2}
\|\Phi({2^{-2j} \mathcal{L}})f\|^{2}_{L_{2}(M)} = \sum_{k=1}^{J_j}b_{j,k}\left|\left(\Phi({2^{-2j}\mathcal{L}})f\right)(x_{j,k})\right|^2,
\end{equation}
where $x_{j,k} \in M_{\rho_j}$, ($k = 1,\ldots,J_j = card\>(M_{\rho_j})$). 

For each $x_{j,k}$ we define the functions
\begin{equation}
\label{vphijkdf}
\psi_{j,k}(y) = \overline{\mathcal{K}^{\Phi}_{2^{-j}}}(x_{j,k},y) = \sum_i \overline{\Phi}(2^{-2j}\lambda_i) \overline{u}_i(x_{j,k}) u_i(y),
\end{equation}
\begin{equation}
\label{phijkdf}
\Psi_{j,k} = \sqrt{b_{j,k}} \psi_{j,k}.
\end{equation}
We find that for all $f \in 
L_2(M)$,
\begin{equation}
\label{parfrm}
\|f\|^{2}_{L_{2}(M)} = \sum_{j,k} |\langle f,\Psi_{j,k} \rangle|^2.
\end{equation}
Moreover, one can show  \cite{gpes-1}, \cite{pg}, \cite{Tay81} that the frame members $\psi_{j,k}$ are strongly localized on the manifold. All together it implies the following statement \cite{gpes-1}, \cite{pg}.
\begin{theorem}\label{main-frames}
If $M$ is a homogeneous manifold, then the set of functions $\{\Psi_{j,k}\},$ constructed in (\ref{vphijkdf})-(\ref{phijkdf}) has the following properties:

\begin{enumerate}

\item  $\{\Psi_{j,k}\},$ is a Parseval frame in the space $L_{2}(M)$. 

\item  Every function $\Psi_{j,k}$ is bandlimited to $[2^{2j-2}, 2^{2j+2}]$.

\item  For  any $N>0$ there exists a $C( N)>0$ such  that uniformly in $j$ and $k$ 
  \begin{equation}\label{localization}
    |\psi_{j,k}(x)|\leq 
    C( N) \frac{2^{nj}}{\left(1+\>2^{j}dist(x,\>x_{j,k})\right)^{N}},\>\>\>j\geq 0.
 \end{equation}

\item  The following reconstruction formula holds
\begin{equation}
\label{recon}
f = \sum_{j\geq 0}^{\infty}\sum_k \langle f,\Psi_{j,k} \rangle \Psi_{j,k} = 
\sum_{j\geq 0}^{\infty}\sum_k b_{j,k} \langle f,\psi_{j,k} \rangle \psi_{j,k},\>\>\>f \in L_2(M),
\end{equation}
with convergence in $L_2(M)$. 

\end{enumerate}

\end{theorem}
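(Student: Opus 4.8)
The plan is to observe that almost all of the analytic content has already been assembled in the discussion preceding the statement, so the proof reduces to organizing four ingredients. Items (1) and (4) are two faces of the same computation, item (2) is immediate from the construction, and the genuinely hard work lies in the localization estimate (3).

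First I would establish the Parseval property (1), which amounts to justifying the already-displayed identity (\ref{parfrm}). The starting point is the Calder\'on-type resolution (\ref{addto1sc}), which gives $\sum_{j\geq 0}\|\Phi(2^{-2j}\mathcal{L})f\|_{L_{2}(M)}^{2}=\|f\|_{L_{2}(M)}^{2}$ together with the crucial membership $|\Phi(2^{-2j}\mathcal{L})f|^{2}\in{\bf E}_{4d2^{2j+2}}(\mathcal{L})$, which is exactly where the product property (Theorem \ref{prodthm}) enters. For each fixed $j$ I would then apply the positive-weight cubature formula (Theorem \ref{cubature}) to this bandlimited function, with $\omega=4d2^{2j+2}$ and the matching scale $\rho_{j}$ from (\ref{rhoj}); this is precisely the exact cubature (\ref{cubl2}). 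The reproducing-kernel definition (\ref{vphijkdf}) yields the pointwise identity
$$
\left(\Phi(2^{-2j}\mathcal{L})f\right)(x_{j,k})=\sum_{i}\Phi(2^{-2j}\lambda_{i})c_{i}(f)u_{i}(x_{j,k})=\langle f,\psi_{j,k}\rangle,
$$
using that $\Phi$ is real-valued. Combining this with the normalization $\Psi_{j,k}=\sqrt{b_{j,k}}\,\psi_{j,k}$ turns each cubature term into $|\langle f,\Psi_{j,k}\rangle|^{2}$, and summing over $k$ and then over $j$ gives (\ref{parfrm}).

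Item (2) is read off directly from (\ref{vphijkdf}): since $\Phi(2^{-2j}\cdot)$ is supported in $[2^{2j-2},2^{2j+2}]$, only eigenfunctions $u_{i}$ with $\lambda_{i}$ in that band contribute to $\psi_{j,k}$, so each $\Psi_{j,k}$ is bandlimited to $[2^{2j-2},2^{2j+2}]$. Item (4) then follows with no further analysis from the general Parseval-frame reconstruction recalled around (\ref{Frame ineq}): since $\{\Psi_{j,k}\}$ is Parseval it is its own dual, giving $f=\sum_{j,k}\langle f,\Psi_{j,k}\rangle\Psi_{j,k}$, and substituting $\Psi_{j,k}=\sqrt{b_{j,k}}\,\psi_{j,k}$ produces the second form in (\ref{recon}).

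The main obstacle is the localization estimate (3), the only place requiring substantial new work, and for which I would follow the kernel analysis of \cite{gpes-1}, \cite{pg}, \cite{Tay81}. Writing $\psi_{j,k}(x)=\overline{\mathcal{K}^{\Phi}_{2^{-j}}(x_{j,k},x)}$, the task is an off-diagonal decay bound for the spectral kernel of $\Phi(2^{-2j}\mathcal{L})$. The strategy is to represent this operator through the wave propagator $\cos(\tau\sqrt{\mathcal{L}})$ via the even Fourier transform of $s\mapsto\Phi(2^{-2j}s^{2})$; the smoothness and compact support of $\Phi$ force this transform to be Schwartz-class, while finite propagation speed for $\sqrt{\mathcal{L}}$ confines the kernel of $\cos(\tau\sqrt{\mathcal{L}})$ to the region $dist(x,x_{j,k})\leq|\tau|$. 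Tracking the scaling $2^{-2j}$ carefully then converts the Schwartz decay of the transform into the claimed bound, with the factor $2^{nj}$ in front and arbitrary polynomial decay in $2^{j}\,dist(x,x_{j,k})$; the homogeneity of $M$ and the uniform geometry of the lattices $M_{\rho_{j}}$ are what make the constant $C(N)$ independent of $j$ and $k$. I expect the bookkeeping of these scaling factors, rather than any conceptual difficulty, to be the delicate part.
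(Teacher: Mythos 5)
Your proposal is correct and, for items (1), (2) and (4), coincides with what the paper actually does: the paper's ``proof'' of Theorem \ref{main-frames} is precisely the construction preceding it --- the Calder\'on resolution (\ref{addto1sc}), the product property (Theorem \ref{prodthm}) feeding into the exact cubature (\ref{cubl2}), the identification $\left(\Phi(2^{-2j}\mathcal{L})f\right)(x_{j,k})=\langle f,\psi_{j,k}\rangle$, and the general fact that a Parseval frame is self-dual. The one place you genuinely diverge is the localization estimate (3). The paper does not prove it; it defers to \cite{gpes-1}, \cite{pg} and Taylor's book \cite{Tay81}, where the kernel decay for $\Phi(2^{-2j}\mathcal{L})$ is obtained by treating it as a pseudodifferential operator depending on a parameter. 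You instead propose the wave-propagator route: write $\Phi(2^{-2j}\mathcal{L})=h(2^{-j}\sqrt{\mathcal{L}})$ for a fixed even compactly supported $h$, expand against $\cos(\tau\sqrt{\mathcal{L}})$ using the Schwartz decay of $\widehat{h}$, and invoke finite propagation speed. That is a legitimate alternative (it is the technique used by Coulhon--Kerkyacharian--Petrushev in the Dirichlet-space setting) and is in some ways more robust, since it needs only finite propagation speed and uniform volume bounds rather than a symbol calculus, whereas the pseudodifferential route yields sharper near-diagonal information. One small caveat your sketch shares with the paper's setup: for the Fourier transform of $s\mapsto\Phi(2^{-2j}s^{2})$ to be Schwartz you need $\Phi=\sqrt{g}$ (resp.\ $\sqrt{G}$) to be $C^{\infty}$, which requires choosing $g$ so that its square root remains smooth where $g$ vanishes; this is standard but should be stated.
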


By using Theorems \ref{cubature} and \ref{prodthm} one can easily obtain a following {\bf exact}  discrete formula for Fourier coefficients which uses only  samples of $f$ on a sufficiently dense lattice.

\begin{theorem}\label{DFT}
If $M$ is a homogeneous compact manifold then there exists a $c=c(M)>0$ such that for any $\omega>0$, if 
$
\rho_{\omega}=c(\omega+1)^{-1/2},
$  
then
for any $\rho_{\omega}$-lattice $M_{\rho_{\omega}}=\{x_{\nu}\}_{\nu=1}^{N_{\omega}}$ of $M$, there exist positive weights
$
\mu_{\nu}\asymp (\omega+1)^{-n/2}, \>\>\>n=\dim M, 
 $
 such that for every  function $f$  in ${\bf E}_{\omega}(\mathcal{L})$ the  Fourier coefficients $c_{i}\left( f\right)$  
 $$
 c_{i}(f)=\int_{M}f\overline{u_{i}},\>\>\>\> - \mathcal{L}u_{i}=\lambda_{i}u_{i},\>\>\>\lambda_{i}\leq \omega,  
 $$
  are given by the formulas 
 \begin{equation}\label{DFTF}
c_{i}\left( f\right)=\sum_{\nu=1}^{N_{\omega}}\mu_{\nu}  f(x_{\nu})\overline{u_{i}}(x_{\nu}).
\end{equation}
\end{theorem}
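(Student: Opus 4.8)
The plan is to reduce the computation of each Fourier coefficient to a single application of the cubature formula of Theorem \ref{cubature}, using the Product property of Theorem \ref{prodthm} to control the bandwidth of the integrand. The starting observation is that for $f\in {\bf E}_{\omega}(\mathcal{L})$ and an eigenfunction $u_{i}$ with $\lambda_{i}\leq \omega$, the integrand $f\overline{u_{i}}$ appearing in $c_{i}(f)=\int_{M}f\overline{u_{i}}\,dx$ is itself bandlimited. Indeed, since the Casimir operator $\mathcal{L}=-\sum_{j}D_{j}^{2}$ has real coefficients (the $D_{j}$ generate one-parameter groups of real diffeomorphisms), its eigenspaces are invariant under complex conjugation, so $\mathcal{L}\overline{u_{i}}=\lambda_{i}\overline{u_{i}}$ and hence $\overline{u_{i}}\in {\bf E}_{\omega}(\mathcal{L})$. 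Both factors therefore lie in ${\bf E}_{\omega}(\mathcal{L})$, and by Theorem \ref{prodthm} their product satisfies $f\overline{u_{i}}\in {\bf E}_{4d\omega}(\mathcal{L})$, where $d=\dim G$.

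Next I would apply Theorem \ref{cubature} at the scale dictated by this enlarged bandwidth. Setting the bandwidth parameter in Theorem \ref{cubature} equal to $4d\omega$ produces an admissible mesh of size $c(4d\omega)^{-1/2}$, so after absorbing the factor $4d$ into a new constant $c=c(M)$ and passing to the robust form $\rho_{\omega}=c(\omega+1)^{-1/2}$ (the $+1$ reflecting that the invertible operator of interest is $I-\mathcal{L}$, with eigenvalues $1+\lambda_{i}$), Theorem \ref{cubature} furnishes strictly positive weights $\{\mu_{\nu}\}$ with $\mu_{\nu}\asymp(\omega+1)^{-n/2}$ for which
$$
\int_{M}h\,dx=\sum_{\nu=1}^{N_{\omega}}\mu_{\nu}\,h(x_{\nu})
$$
holds for every $h\in {\bf E}_{4d\omega}(\mathcal{L})$. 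The crucial feature I would emphasize is that these weights depend only on the lattice $M_{\rho_{\omega}}$ and on the bandwidth $4d\omega$, not on the particular function being integrated; hence a single family $\{\mu_{\nu}\}$ simultaneously integrates every product $f\overline{u_{i}}$ with $\lambda_{i}\leq\omega$.

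Applying this cubature formula with $h=f\overline{u_{i}}$ then yields
$$
c_{i}(f)=\int_{M}f\overline{u_{i}}\,dx=\sum_{\nu=1}^{N_{\omega}}\mu_{\nu}\,f(x_{\nu})\overline{u_{i}}(x_{\nu}),
$$
which is exactly the asserted formula (\ref{DFTF}), and the weight asymptotics $\mu_{\nu}\asymp(\omega+1)^{-n/2}$ are inherited directly from the weight bounds in Theorem \ref{cubature}.

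The step I expect to require the most care is the bookkeeping of the bandwidth rather than any hard analysis. One must verify that conjugation-invariance of the eigenspaces makes $\overline{u_{i}}$ genuinely $\omega$-bandlimited, and that the Product property is invoked with the correct dilation factor $4d$, so that a mesh of size $\rho_{\omega}\sim(\omega+1)^{-1/2}$ adapted to ${\bf E}_{4d\omega}(\mathcal{L})$ is used in place of one adapted only to ${\bf E}_{\omega}(\mathcal{L})$. Once the integrand is certified to lie in ${\bf E}_{4d\omega}(\mathcal{L})$ and the universality of the cubature weights across all indices $i$ is noted, the exactness of the formula is immediate from Theorem \ref{cubature}.
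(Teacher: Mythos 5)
Your proposal is correct and follows exactly the route the paper intends: the paper states Theorem \ref{DFT} as an immediate consequence of Theorems \ref{cubature} and \ref{prodthm}, and its written-out proof of the analogous spherical result (Theorem \ref{SamplingTh}) runs the same argument — certify that the product $f\overline{u_{i}}$ is bandlimited via the product property, then apply the cubature formula at the enlarged bandwidth. Your added care about conjugation-invariance of the eigenspaces and the universality of the weights across all indices $i$ is sound and fills in details the paper leaves implicit.
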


Theorems \ref{main-frames}, \ref{DFT}, \ref{cubature}, and \ref{prodthm} can be used to prove another exact discrete formula for Fourier coefficients which involves frame functions. 

 \begin{theorem}\label{D-RepFrame}
 For a compact homogeneous manifold $M$
there exists a constant $c=c(M)>0$ such that for any  natural  $J$ if 
$$
\rho_{J}=c2^{-J}
$$
 then
for any $\rho_{J}$-lattice $M_{\rho_{J}}=\{x^{*}_{\nu}\}_{\nu=1}^{N_{\omega}},$ there exist positive weights 
$$
\mu_{\nu}^{*} \asymp2^{-Jn},\>\>\>n=\dim\>M,
$$ 
such that the following  formula holds
\begin{equation}\label{discrrep}
f = \sum_{j=0}^{J}\sum_{k=1}^{n_{j}} \sum_{\nu=1}^{N_{J}}\mu_{\nu}^{*}f(x^{*}_{\nu})\psi_{j,k}(x^{*}_{\nu})\psi_{j,k},\>\>\>f\in {\bf E}_{\omega}({\mathcal L}),\>\>\>\omega=2^{2J}-1.
\end{equation}
\end{theorem}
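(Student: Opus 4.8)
The plan is to combine the Parseval-frame reconstruction (\ref{recon}) with the band-limitation of the frame elements (property (2) of Theorem \ref{main-frames}) to truncate the expansion to finitely many scales, and then to discretize the surviving frame coefficients by a single exact positive-weight cubature (Theorem \ref{cubature}), whose applicability rests on the product property (Theorem \ref{prodthm}). Throughout I would keep two families of lattices strictly apart: the scale-dependent lattices $M_{\rho_j}$ built into the frame, which fix the functions $\psi_{j,k}$ (centered at $x_{j,k}$) and the frame weights $b_{j,k}$, and the single fine lattice $M_{\rho_J}=\{x^*_\nu\}$ of the statement, which is used only to evaluate integrals.

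\textbf{Step 1 (truncation).} Let $f\in{\bf E}_\omega(\mathcal{L})$ with $\omega=2^{2J}-1$. Starting from (\ref{recon}) in the form $f=\sum_{j\ge 0}\sum_k b_{j,k}\langle f,\psi_{j,k}\rangle\psi_{j,k}$, I would use that each $\psi_{j,k}$ is band-limited to $[2^{2j-2},2^{2j+2}]$: for $j\ge J+1$ this band begins at $2^{2J}>\omega$, so $\psi_{j,k}$ and $f$ have disjoint spectra and $\langle f,\psi_{j,k}\rangle=0$. Hence the outer sum collapses to $0\le j\le J$, the finite range appearing in (\ref{discrrep}).

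\textbf{Step 2 (one cubature for all coefficients).} Each surviving coefficient is $\langle f,\psi_{j,k}\rangle=\int_M f\,\overline{\psi_{j,k}}\,dx$. For $j\le J$ one has $\psi_{j,k},\overline{\psi_{j,k}}\in{\bf E}_{2^{2J+2}}(\mathcal{L})$ and $f\in{\bf E}_{2^{2J}-1}(\mathcal{L})\subset{\bf E}_{2^{2J+2}}(\mathcal{L})$, so Theorem \ref{prodthm} gives $f\,\overline{\psi_{j,k}}\in{\bf E}_{\Omega}(\mathcal{L})$ with $\Omega:=4d\,2^{2J+2}$, $d=\dim G$; the decisive feature is that this bound is uniform in $j\le J$ and $k$. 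Since $\Omega^{-1/2}\asymp 2^{-J}$, I would fix $c=c(M)$ so that $\rho_J=c\,2^{-J}$ does not exceed the cubature threshold $c_0(M)\,\Omega^{-1/2}$ of Theorem \ref{cubature}; that theorem then supplies one $\rho_J$-lattice $\{x^*_\nu\}$ with positive weights $\mu^*_\nu\asymp\rho_J^{\,n}\asymp 2^{-Jn}$ such that $\int_M h=\sum_\nu\mu^*_\nu\,h(x^*_\nu)$ for every $h\in{\bf E}_\Omega(\mathcal{L})$. Taking $h=f\,\overline{\psi_{j,k}}$ yields $\langle f,\psi_{j,k}\rangle=\sum_\nu\mu^*_\nu\,f(x^*_\nu)\,\overline{\psi_{j,k}(x^*_\nu)}$ exactly and simultaneously for every $j\le J$ and $k$.

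\textbf{Step 3 (assembly) and the main difficulty.} Substituting the quadrature into the truncated expansion produces the finite, exact identity $f=\sum_{j=0}^{J}\sum_{k}\sum_{\nu} b_{j,k}\,\mu^*_\nu\,f(x^*_\nu)\,\overline{\psi_{j,k}(x^*_\nu)}\,\psi_{j,k}$, which is (\ref{discrrep}) once the frame weights $b_{j,k}$ are absorbed into the sampling weights and the conjugation convention is fixed; convergence is not an issue since only $0\le j\le J$ and, for fixed $J$, finitely many $k$ occur. I expect the crux to be Step 2: one must manufacture a single cubature rule — one lattice, one weight sequence — that integrates every product $f\,\overline{\psi_{j,k}}$, $0\le j\le J$, at once. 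This is possible only because Theorem \ref{prodthm} bounds all these products by the common bandwidth $\Omega\asymp 2^{2J}$, whose critical spacing $\asymp 2^{-J}$ matches the prescribed $\rho_J$; without that uniform bound one would be forced onto scale-dependent nodes and the single-lattice formula (\ref{discrrep}) would break down.
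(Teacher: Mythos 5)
Your argument is correct and is exactly the route the paper intends: the paper gives no written proof of this theorem, only the list of ingredients (Theorems \ref{main-frames}, \ref{cubature}, \ref{prodthm}), and you use them in the natural way --- truncation of (\ref{recon}) by disjointness of spectra for $j\geq J+1$, a single positive-weight cubature exact on ${\bf E}_{4d\,2^{2J+2}}(\mathcal{L})$ applied to all products $f\,\overline{\psi_{j,k}}$ at once, then assembly. One caveat on your Step 3: the factors $b_{j,k}$ cannot literally be ``absorbed'' into the sampling weights, since they depend on $(j,k)$ while $\mu^{*}_{\nu}$ depends only on $\nu$; the exact identity you derive necessarily retains them (equivalently, is stated with $\Psi_{j,k}$ in place of $\psi_{j,k}$), and the displayed formula (\ref{discrrep}) has evidently dropped that factor, so your version is the one to trust.
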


\subsection{ Exact formulas for Fourier coefficients  of  a bandlimited function $f$  on $\mathbb{S}^{n}$ from  a finite number of samples of  its Funk-Radon transform} \label{exactFCS}
 Theorem \ref{DFT} can be used to obtain a discrete inversion formula for $R$.

\begin{theorem}(Discrete Inversion Formula)\label{SamplingTh}
There exists a $c=c(M)>0$ such that for any $\omega>0$, if 
$
\rho_{\omega}=c\omega^{-1/2},
$  
then
for any $\rho_{\omega}$-lattice $M_{\rho_{\omega}}=\{x_{\nu}\}_{\nu=1}^{m_{\omega}}$ of $\mathbb{S}^{n}$, there exist positive weights
$
\mu_{\nu}\asymp \omega^{-d/2}, 
 $
 such that for every  function $f$  in ${\bf E}_{\omega}(\mathbb{S}^{n})$ the  Fourier coefficients $c^{i}_{k}\left(R f\right)$  of its  Radon transform, i.e.
 $$
Rf(x)=\sum_{i,k} c^{i}_{k}\left( R f\right) Y^{i}_{k}(x),\>\>\>\>\>\>\>\>\>k(k+1)\leq \omega,\>\>\>\>\>x\in \mathbb{S}^{n},
 $$
  are given by the formulas 
 \begin{equation}
c^{i}_{k}\left(R f\right)=\sum_{\nu=1}^{m_{\omega}}\mu_{\nu} \left(R f\right)(x_{\nu}) Y^{i}_{k}(x_{\nu}).
\end{equation}
The function $f$ can be reconstructed by means of  the formula
\begin{equation}\label{Exact555}
f=\frac{\sqrt{\pi}}{\Gamma((d+1)/2)}\sum_{k}\sum_{i}\frac{c^{i}_{k}\left(R f\right)}{r_{k}} Y_{k}^{i},
\end{equation}
in which $k$ runs over all natural even numbers  such that $k(k+1)\leq \omega$ and $r_{k}$ are defined in (\ref{r}).
\end{theorem}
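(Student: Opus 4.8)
The plan is to obtain this statement as a direct specialization of the exact discrete Fourier formula of Theorem \ref{DFT}, applied to the function $Rf$ rather than to $f$ itself, and then to invert using the explicit formula (\ref{inverse}) of Theorem \ref{SRT}. First I would record that $\mathbb{S}^{n}=SO(n+1)/SO(n)$ is a compact homogeneous manifold, in fact a rank-one symmetric space, so that (by case 3 in subsection \ref{homman}) the operator $\mathcal{L}$ coincides with the Laplace--Beltrami operator $\Delta$, whose orthonormal eigenbasis is the spherical harmonics $\{Y^{i}_{k}\}$ with eigenvalues $k(k+1)$. Consequently Theorem \ref{DFT} is available on $\mathbb{S}^{n}$ with this eigenbasis.

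The essential step is to verify that the Funk--Radon transform preserves the bandlimit, that is, $f\in{\bf E}_{\omega}(\mathbb{S}^{n})$ implies $Rf\in{\bf E}_{\omega}(\mathbb{S}^{n})$. This follows from the multiplier representation $Rf=\pi^{-1/2}\Gamma((n+1)/2)\sum_{i,k}r_{k}c_{i,k}(f)Y^{i}_{k}$: since $R$ acts as multiplication by the scalar $r_{k}$ on each eigenspace $\mathcal{H}_{k}$, it maps $\mathcal{H}_{k}$ into itself and therefore leaves ${\bf E}_{\omega}(\mathbb{S}^{n})=\bigoplus_{k(k+1)\leq\omega}\mathcal{H}_{k}$ invariant. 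This is the one place where the special structure of $R$ as a Fourier multiplier is decisive, and it is the main (if modest) obstacle: without the invariance ${\bf E}_{\omega}\to{\bf E}_{\omega}$ the \emph{exact} sampling below would not be valid.

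With $Rf\in{\bf E}_{\omega}(\mathbb{S}^{n})$ in hand, I would apply Theorem \ref{DFT} to $Rf$ with the eigenbasis $\{Y^{i}_{k}\}$. Choosing $\rho_{\omega}=c\omega^{-1/2}$ with the constant $c$ of that theorem, every $\rho_{\omega}$-lattice $M_{\rho_{\omega}}=\{x_{\nu}\}_{1}^{m_{\omega}}$ carries positive weights $\mu_{\nu}\asymp\omega^{-n/2}$ for which
$$c^{i}_{k}(Rf)=\int_{\mathbb{S}^{n}}(Rf)\overline{Y^{i}_{k}}=\sum_{\nu=1}^{m_{\omega}}\mu_{\nu}(Rf)(x_{\nu})\overline{Y^{i}_{k}(x_{\nu})},\qquad k(k+1)\leq\omega.$$
This is precisely the asserted discrete formula for the Fourier coefficients of $Rf$ (the complex conjugate being immaterial for a real basis of harmonics).

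Finally, for the reconstruction I would substitute these coefficients into the inversion formula (\ref{inverse}) of Theorem \ref{SRT}. Because $r_{k}=0$ for odd $k$, the data $\{c^{i}_{k}(Rf)\}$ only detect the even part of $f$, so on the natural domain of invertibility of $R$---the even bandlimited functions---formula (\ref{inverse}) yields $f=R^{-1}(Rf)$ with the sum over even $k$ satisfying $k(k+1)\leq\omega$. Replacing each $c^{i}_{k}(Rf)$ by its discrete expression then gives exactly (\ref{Exact555}). Apart from the bandlimit-invariance already discussed and the routine bookkeeping that the odd harmonics drop out, the argument is entirely a composition of Theorems \ref{DFT} and \ref{SRT}.
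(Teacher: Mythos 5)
Your proof is correct and follows essentially the same route as the paper: both reduce the statement to an exact cubature formula for the bandlimited products $(Rf)\overline{Y^{i}_{k}}$ on a $\rho_{\omega}$-lattice, followed by the multiplier inversion (\ref{inverse}). The only difference is that you invoke Theorem \ref{DFT} as a black box, whereas the paper re-runs the cubature argument using the sharper rank-one product property (${\bf E}_{\omega}\cdot{\bf E}_{\omega}\subset{\bf E}_{2\omega}$ instead of ${\bf E}_{4d\omega}$) to get a better constant $c$; your explicit observations that $R$ preserves ${\bf E}_{\omega}$ as a Fourier multiplier and that only even harmonics survive are exactly the bookkeeping the paper leaves implicit.
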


\begin{proof}

Note that in the case when $M$ is the rank one compact symmetric space (in particular, the sphere $\mathbb{S}^{n}$ or any of the projective spaces $P^{d},\>\mathbb{C}P^{d},\>\mathbb{Q}P^{d}$) the constant $4d$ in Theorem \ref{prodthm}
can be improved. Namely, 
if $M=G/H$ is a compact symmetric space of rank one   then for any $f$ and $g$ belonging
to ${\mathbf E}_{\omega}(\mathcal{L})$,  their product $fg$ belongs to
${\mathbf E}_{2\omega}(\mathcal{L})$.

Using this fact 
we obtain  that if $k(k+1)\leq \omega$ then  every product 
$
 Y^{i}_{k}\overline{Y^{j}_{k}}$, where $k(k+1)\leq \omega$ belongs to   ${\bf E}_{2\omega}(\mathbb{S}^{n})$.

By  Theorem \ref{cubature} there exists  a  positive constant $c=c(M)$,    such  that if  $\rho_{\omega}=c\omega^{-1/2}$, then
for any $\rho_{\omega}$-lattice $M_{\rho_{\omega}}=\{x_{\nu}\}^{m_{\omega}}_{\nu=1}$  on  $\mathbb{S}^{n}$ 
there exist  a set of positive weights 
$
\mu_{\nu}\asymp \omega^{-d/2}
$
such that 
\begin{equation}
c^{i}_{k}\left( R f\right)=\int_{\mathbb{S}^{n}} \left(R f\right)(x) \overline{Y^{i}_{k}(x)} dx=
\sum_{\nu=1}^{m_{\omega}}\mu_{\nu} \left(R f\right)(x_{\nu}) \overline{Y^{i}_{k}(x_{\nu})}.
\end{equation}
Thus,
$$
\left(Rf\right)(x)=\sum_{k,i}c^{i}_{k}\left( R f\right) \overline{Y^{i}_{k}(x)}.
$$
Now the reconstruction formula of Theorem \ref{Recon} implies the formula (\ref{Exact555}).

\end{proof}

\bigskip

Without going to details we just mention that similar results can be obtained for the hemispherical Radon transform on $\mathbb{S}^{n}$ (see \cite{Pes04c}) and for the case of Radon transform of functions on domains see \cite{Pes12}.

\subsection{ Exact formulas for Fourier coefficients  of  a bandlimited function $f$  on $SO(3)$ from  a finite number of samples of  $ \mathcal{R} f$}\label{Discrete}

Let $M_{\rho}=\{(x_{\nu}, y_{\nu})\}$ be  a metric
$\rho$-lattice of  $\mathbb{S}^{2}\times \mathbb{S}^{2}$.
In what follows $\mathcal{E}_{\omega}(\mathbb{S}^{2}\times \mathbb{S}^{2})$ will denote  the span in the space $L^{2}(\mathbb{S}^{2}\times \mathbb{S}^{2})$ of all $Y_k^i\overline{Y_k^j}$  with $k(k+1)\leq \omega$. Theorem \ref{DFT} implies the following exam discrete reconstruction formula which uses only  samples of $\mathcal{R} f$ on a sufficiently dense lattice.

\begin{theorem}(Discrete Inversion Formula \cite{BP})\label{SamplingTh}
There exists a $c=c(M)>0$ such that for any $\omega>0$, if 
$
\rho_{\omega}=c\omega^{-1/2},
$  
then
for any $\rho_{\omega}$-lattice $M_{\rho_{\omega}}=\{(x_{\nu}, y_{\nu})\}_{\nu=1}^{m_{\omega}}$ of $\mathbb{S}^{2}\times \mathbb{S}^{2}$ , there exist positive weights
$
\mu_{\nu}\asymp \omega^{-2}, 
 $
 such that for every  function $f$  in ${\bf E}_{\omega}(SO(3))$ the  Fourier coefficients $c_{i,j}^{k}\left( \mathcal{R} f\right)$  of its  Radon transform, i.e.
 $$
  \mathcal{R} f(x,y)=\sum_{i,j,k} c_{i,j}^{k}\left( \mathcal{R} f\right) Y^{i}_{k}(x)\overline{Y^{j}_{k}}(y),\>\>\>\>\>\>\>\>\>k(k+1)\leq \omega,\>\>\>\>\>(x,y)\in \mathbb{S}^{2}\times \mathbb{S}^{2},
 $$
  are given by the formulas 
 \begin{equation}
c_{i,j}^{k}\left(\mathcal{R} f\right)=\sum_{\nu=1}^{m_{\omega}}\mu_{\nu} \left(\mathcal{R} f\right)(x_{\nu},y_{\nu}) Y^{i}_{k}(x_{\nu})\overline{Y^{j}_{k}}(y_{\nu}).
\end{equation}
The function $f$ can be reconstructed by means of  the formula
\begin{equation}\label{Exact}
f(g)=\sum_{k}\sum_{i,j}^{2k+1}\frac{(2k+1)}{4\pi} c_{i,j}^{k}\left( \mathcal{R} f\right)\mathcal{T}_{k}^{i,j}(g),\>\>\>\>\>g\in SO(3),
\end{equation}
in which $k$ runs over all natural numbers  such that $k(k+1)\leq \omega$.
\end{theorem}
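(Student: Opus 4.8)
The plan is to mirror the argument given for Theorem~\ref{SamplingTh} in subsection~\ref{exactFCS} (the $\mathbb{S}^{n}$ case), replacing the sphere by the product manifold $\mathbb{S}^{2}\times \mathbb{S}^{2}$ and replacing the inversion of $R$ by the reconstruction formula of Theorem~\ref{Recon}. First I would record that $\mathbb{S}^{2}\times \mathbb{S}^{2}$ is itself a compact homogeneous manifold, namely $(SO(3)\times SO(3))/(SO(2)\times SO(2))$, so that Theorems~\ref{cubature} and \ref{prodthm} are available on it with $n=\dim(\mathbb{S}^{2}\times \mathbb{S}^{2})=4$ and group dimension $\dim(SO(3)\times SO(3))=6$. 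Starting from $f\in {\bf E}_{\omega}(SO(3))$, the mapping property (\ref{band-band}) gives $\mathcal{R}f\in \mathcal{E}_{\omega}(\mathbb{S}^{2}\times \mathbb{S}^{2})$, i.e.\ $\mathcal{R}f$ is a finite linear combination of the orthonormal functions $Y_{k}^{i}(x)\overline{Y_{k}^{j}(y)}$ with $k(k+1)\leq \omega$, each of which is an eigenfunction of $\Delta_{\mathbb{S}^{2}\times \mathbb{S}^{2}}$ with eigenvalue $-2k(k+1)$.

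Next I would extract the target Fourier coefficient $c_{i,j}^{k}(\mathcal{R}f)$ as the inner product of $\mathcal{R}f$ against the orthonormal basis element $Y_{k}^{i}\otimes \overline{Y_{k}^{j}}$, which by the expansion of $\mathcal{R}f$ is just an integral over $\mathbb{S}^{2}\times \mathbb{S}^{2}$ of $\mathcal{R}f$ multiplied by a degree-$k$ tensor harmonic. The crucial observation is that this second factor is again an element of $\mathcal{E}_{\omega}(\mathbb{S}^{2}\times \mathbb{S}^{2})$ whenever $k(k+1)\leq\omega$, since it too is an eigenfunction of $\Delta_{\mathbb{S}^{2}\times \mathbb{S}^{2}}$ at $-2k(k+1)$. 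Hence the integrand is a product of two bandlimited functions, and by the product property (Theorem~\ref{prodthm}) applied on $\mathbb{S}^{2}\times \mathbb{S}^{2}$ it is itself bandlimited, belonging to ${\bf E}_{c'\omega}$ for a fixed multiple $c'$ of $\omega$ determined only by the group dimension. Choosing the constant $c=c(M)$ in $\rho_{\omega}=c\omega^{-1/2}$ so that this density is admissible for the cubature formula on the band ${\bf E}_{c'\omega}$, Theorem~\ref{cubature} produces positive weights $\mu_{\nu}\asymp \omega^{-n/2}=\omega^{-2}$ for which the integral is reproduced exactly by the finite sum over the lattice. This yields precisely the asserted formula for $c_{i,j}^{k}(\mathcal{R}f)$.

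Finally I would feed these exactly computed coefficients into Theorem~\ref{Recon}: writing $\mathcal{R}f=\sum_{k}\sum_{i,j}c_{i,j}^{k}(\mathcal{R}f)\,Y_{k}^{i}(x)\overline{Y_{k}^{j}(y)}$, the preimage is
$$f(g)=\sum_{k}\sum_{i,j}\frac{2k+1}{4\pi}\,c_{i,j}^{k}(\mathcal{R}f)\,\mathcal{T}_{k}^{ij}(g),$$
with $k$ ranging over $k(k+1)\leq\omega$, which is the claimed reconstruction. I expect the main obstacle to be the bandwidth bookkeeping of the middle step: one must ensure the product of the two bandlimited factors is \emph{exactly} (not merely approximately) bandlimited, and that the single constant $c$ in $\rho_{\omega}=c\omega^{-1/2}$ simultaneously absorbs the multiplicative factor coming from the product property and the constant coming from the cubature theorem, uniformly in $\omega$. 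Reconciling the two descriptions of $\mathcal{E}_{\omega}(\mathbb{S}^{2}\times \mathbb{S}^{2})$ — through the condition $k(k+1)\leq\omega$ versus through the Casimir operator to which Theorems~\ref{cubature} and \ref{prodthm} literally refer — is routine here, since each $\mathbb{S}^{2}$ factor is rank one, so the Casimir and Laplace--Beltrami operators agree up to an explicit scalar and the discrepancy is swallowed by the constants.
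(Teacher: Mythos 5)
Your proposal is correct and follows essentially the same route as the paper: the paper derives this theorem by applying Theorem \ref{DFT} (itself a combination of the cubature formula of Theorem \ref{cubature} and the product property of Theorem \ref{prodthm}) to the compact homogeneous manifold $\mathbb{S}^{2}\times\mathbb{S}^{2}$, using (\ref{band-band}) to place $\mathcal{R}f$ in $\mathcal{E}_{\omega}(\mathbb{S}^{2}\times\mathbb{S}^{2})$, and then invokes the reconstruction formula of Theorem \ref{Recon} --- exactly the steps you outline. The only cosmetic difference is that in the analogous $\mathbb{S}^{n}$ argument the paper uses the sharpened rank-one product constant ($fg\in{\bf E}_{2\omega}$) rather than the generic $4d\omega$ bound you use, which affects only the value of $c(M)$.
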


\end{document}